\let\old@setaddresses\@setaddresses
\def\@setaddresses{\bigskip\bgroup\parindent 0pt\let\scshape\relax\old@setaddresses\egroup}
\newcommand{\pref}[1]{(P\ref{#1})}
\newcommand{\psref}[1]{(P\ref{#1}$'$)}
\newcommand{\nlref}[1]{(NL\ref{#1})}
\newenvironment{property}{\begin{enumerate}[(P1)]}{\end{enumerate}}
\newenvironment{pproperty}{\begin{enumerate}[(P1$'$)]}{\end{enumerate}}
\newtheorem{theorem}{Theorem}
\newtheorem{lemma}[theorem]{Lemma}
\newtheorem{observation}[theorem]{Observation}
\newtheorem{claim}[theorem]{Claim}
\newcommand{\comparable}{\mathbin{\diamondsuit}}
\newcommand{\F}{\mathcal{F}}
\newcommand{\Q}{\mathcal{Q}}
\DeclareMathOperator{\h}{height}
\DeclareMathOperator{\R}{\mathbb{R}}
\DeclareMathOperator{\N}{\mathbb{N}}
\let\le\leqslant
\let\ge\geqslant
\let\leq\leqslant
\let\geq\geqslant
\begin{document}

\title{Sparse universal graphs for planarity}

\author[L.~Esperet]{Louis Esperet}
\address[L.~Esperet]{Laboratoire G-SCOP (CNRS, Univ.\ Grenoble Alpes), Grenoble, France}
\email{louis.esperet@grenoble-inp.fr}

\author[G.~Joret]{Gwena\"el Joret}
\address[G.~Joret]{D\'epartement d'Informatique,
Universit\'e libre de Bruxelles,
Brussels, Belgium}
\email{gwenael.joret@ulb.be}

\author[P.~Morin]{Pat Morin}
\address[P.~Morin]{School of Computer Science, Carleton University,
  Canada}
\email{morin@scs.carleton.ca}

\thanks{L.\ Esperet is partially supported by the French ANR Projects ANR-16-CE40-0009-01 (GATO) and ANR-18-CE40-0032 (GrR). G.\ Joret is supported by an ARC grant from the Wallonia-Brussels Federation of Belgium and a CDR grant from the National Fund for Scientific Research (FNRS). P.\ Morin is partially supported by NSERC}

\date{\today}
\sloppy

\begin{abstract}
We show that for every integer $n\geq 1$ there exists a graph $G_n$
with $(1+o(1))n$ vertices and $n^{1 + o(1)}$ edges such that every
$n$-vertex planar graph is isomorphic to a subgraph of $G_n$. The best
previous bound on the number of edges was $O(n^{3/2})$, proved by
Babai, Chung, Erd\H{o}s, Graham, and Spencer in 1982. We then show
that for every integer $n\geq 1$ there is a graph $U_n$
with $n^{1 + o(1)}$ vertices and edges that contains induced copies
of  every
$n$-vertex planar graph. This significantly reduces the number of
edges in a recent construction of the authors with Dujmovi\'c,
Gavoille, and Micek.
\end{abstract}

\subjclass[2020]{05C07, 05C70, 05D40}

\maketitle

\section{Introduction}\label{sec:intro}

Given a family $\F$ of graphs, a graph $G$ is {\em universal} for $\F$ if every graph in $\F$ is isomorphic to a (not necessarily induced) subgraph of $G$.
The topic of this paper is the following question: What is the minimum
number of edges in a universal graph for the family of $n$-vertex planar graphs?
Besides being a natural question, we note that finding sparse
universal graphs is also motivated by
applications in VLSI design~\cite{bhatt.chung.ea,Val81} and simulation
of parallel computer architecture~\cite{bhatt1988optimal,bhatt1986optimal}.

A moment's thought shows that $\Omega(n \log n)$ edges are needed: for
$t= 1, \dots, n$, consider the forest consisting of $t$ copies of the star
$K_{1, \lfloor n/t\rfloor  - 1}$. A universal graph for the class of
$n$-vertex planar graphs must contain all these forests as subgraphs,
and so it must have a
degree sequence which, once sorted in non-increasing order, dominates
the sequence
\[(n-1, \lfloor \tfrac{n}2\rfloor -1, \lfloor \tfrac{n}3\rfloor -1, \lfloor \tfrac{n}4\rfloor -1,\ldots),
\]
hence the lower bound.
As far as we are aware, no better lower bound is known for $n$-vertex planar graphs.

For $n$-vertex trees, a matching upper bound of $O(n \log n)$ on the
number of edges in the universal graph is
known~\cite{chung.graham}. For $n$-vertex planar graphs of bounded
maximum degree, Capalbo constructed a universal graph with $O(n)$ edges~\cite{Capalbo02}.
However, for general $n$-vertex planar graphs only a $O(n^{3/2})$
bound is known, proved by Babai, Chung, Erd\H{o}s, Graham, and
Spencer~\cite{babai.chung.ea} in 1982 using the existence of
separators of size $O(\sqrt{n})$.

\smallskip

In this paper we show that universal graphs with a near-linear number of edges can be constructed:

\begin{theorem}\label{thm:planar}
  The family of $n$-vertex planar graphs has a universal graph with
  $(1+o(1))n$ vertices and at
  most $n\cdot 2^{O(\sqrt{\log n \cdot \log \log n})}$ edges.
\end{theorem}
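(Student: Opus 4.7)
\emph{Proof plan.}
The strategy is to build on the planar product structure theorem of Dujmovi\'c, Joret, Micek, Morin, Ueckerdt, and Wood: every $n$-vertex planar graph is isomorphic to a subgraph of $H\boxtimes P$ for some graph $H$ of treewidth at most a constant $t$ and some path $P$, with $|V(H)|\cdot|V(P)|\le n$. If $U_M$ is a universal graph for $M$-vertex treewidth-$t$ graphs, then $U_M\boxtimes P_L$ is automatically universal for subgraphs of products $H\boxtimes P$ with $|V(H)|=M$ and $|V(P)|=L$. Thus, if we can build $U_M$ with $(1+o(1))M$ vertices and $M\cdot 2^{O(\sqrt{\log M\log\log M})}$ edges, then $U_M\boxtimes P_L$ for $ML\approx n$ has $(1+o(1))n$ vertices and $n\cdot 2^{O(\sqrt{\log n\log\log n})}$ edges, and a suitable union over the constantly many ``shapes'' $(M,L)$ that can arise absorbs the freedom of the product structure theorem.

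The key sub-lemma to prove is therefore: for every $N$, there is a universal graph $U_N$ for the class of $N$-vertex graphs of treewidth at most $t$ with $(1+o(1))N$ vertices and $N\cdot 2^{O(\sqrt{\log N\log\log N})}$ edges. I would prove this by a balanced-separator recursion with a large branching factor $k$. Using that an $N$-vertex treewidth-$t$ graph admits a balanced separator of size $O(tk)$ splitting it into $k$ pieces of size at most $N/k$, we combine $k$ recursive universal graphs $U_{N/k}$ (on disjoint vertex sets, with total roughly $N$ vertices) with a small ``separator layer'' of $O(k)$ vertices joined to each piece. Iterating to depth $\ell=\Theta(\sqrt{\log N/\log\log N})$ with $k=2^{\Theta(\sqrt{\log N\log\log N})}$ so that $k^\ell\ge N$, each level contributes a multiplicative edge overhead of $\poly(k)$; the product over $\ell$ levels matches the claimed bound.

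I expect the main obstacle to be keeping the vertex count of $U_N$ at $(1+o(1))N$ despite the many recursion levels. Since each level contributes $O(k)$ separator vertices per bag and there are $k^i$ bags at level $i$, naively summing gives up to $O(k \cdot k^\ell) = O(kN)$ extra vertices, which already overwhelms the $(1+o(1))N$ budget. Overcoming this requires \emph{sharing} separator slots across branches at each level (a single ``super-separator'' universal gadget of size $O(k)$ per level, reused by all bags of that level, suffices because the universal property only needs to accommodate one input graph at a time), together with routing edges between parent and child bags through labels. Balancing $k$ and $\ell$ so that additive vertex errors telescope to $o(N)$ while the multiplicative edge overhead telescopes to $2^{O(\sqrt{\log N\log\log N})}$ is the heart of the argument.
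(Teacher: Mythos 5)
Your plan breaks already at the very first reduction. You assert that the product structure theorem gives a factorization $G\subseteq H\boxtimes P$ with $|V(H)|\cdot|V(P)|\le n$, and you use this to take $U_M\boxtimes P_L$ with $ML\approx n$. But the product structure theorem gives no such bound: after trimming (\cref{obs:stproduct}) one only gets $|V(H)|\le n$ and $|V(P)|\le n$, and both bounds can be tight simultaneously. For example, an $n$-vertex path embeds ``diagonally'' in $P_n\boxtimes P_n$, forcing $M=L=n$ and hence $ML=n^2$. Thus $U_M\boxtimes P_L$ can have $\Theta(n^2)$ vertices (and even more edges), so the union over shapes $(M,L)$ does not come anywhere close to $(1+o(1))n$ vertices or $n^{1+o(1)}$ edges. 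The paper's construction handles precisely this: it does not build a universal graph for $H$ and then product with a path, but directly constructs a universal graph for $n$-vertex subgraphs of $C_{\lceil\log n\rceil}\boxtimes P_n$ (\cref{thm:main_technical}), encoding each row $S_y$ of the product by a binary search tree from a \emph{bulk tree sequence} (\cref{lem:tree_sequence}). The crucial point is that $\sum_y|S_y|=n$ even though both grid dimensions can be $\Theta(n)$, and the bulk-tree machinery gives signatures/transition codes whose total length is $\log n + o(\log n)$, so the ``address space'' of the universal graph is $n^{1+o(1)}$ rather than $n^2$.

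There is a second genuine gap in your sub-lemma: the proposed fix of ``a single super-separator universal gadget of size $O(k)$ per level, reused by all bags of that level'' does not work, because the universal property is not ``one bag at a time'' within a fixed input graph. For a single $N$-vertex input $H$ of treewidth $t$, the separators at level $i$ of the recursion are $k^{i-1}$ pairwise disjoint subsets of $V(H)$, and a subgraph embedding must map them to pairwise disjoint vertex sets of the universal graph; a shared $O(k)$-vertex gadget cannot host them all. The paper's mechanism for getting from $n^{1+o(1)}$ vertices down to $(1+o(1))n$ is entirely different: it uses the bipartite expander gadget of \cref{lem:alon} (a graph $H$ in which every $n$-subset of the large side is matchable into the small side, with bounded left-degree), and replaces each vertex of the universal graph by its expander neighborhood, paying a $d^2=n^{o(1)}$ factor in the number of edges. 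This expander step, and the bulk-tree encoding that makes the $n^{1+o(1)}$-vertex construction possible in the first place, are the two ideas your proposal is missing.
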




As the original construction of Babai et al.~\cite{babai.chung.ea}
only uses the existence of
separators of size $O(\sqrt{n})$, it was later shown to apply to more general classes
than planar graphs, for instance to any proper minor-closed
class~\cite{chung1990-separator}. Our result also holds in greater
generality, but not quite as general as the construction of  Babai et
al.~\cite{babai.chung.ea}, as we now explain.

\medskip

The \emph{strong product} $A\boxtimes B$ of two graphs $A$ and $B$ is the graph whose vertex set is the Cartesian product $V(A\boxtimes B):=V(A)\times V(B)$ and in which two distinct vertices $(x_1,y_1)$ and $(x_2,y_2)$ are adjacent if and only if:
\begin{enumerate}
  \item  $x_1x_2 \in E(A)$ and $y_1y_2 \in E(B)$; or
  \item $x_1=x_2$ and $y_1y_2\in E(B)$; or
  \item $x_1x_2 \in E(A)$ and $y_1=y_2$.
  \end{enumerate}

\smallskip

We may now state the main result of this paper.

\begin{theorem}\label{thm:main}
Fix a positive integer $t$ and let $\Q_t$ denote the family of all graphs of the form $H\boxtimes P$ where $H$ is a graph of treewidth $t$ and $P$ is a path, together with all their subgraphs.
Then the family of $n$-vertex graphs in $\Q_t$ has a universal graph
with $(1+o(1))n$ vertices and at most $t^2\cdot n\cdot 2^{O(\sqrt{\log n \cdot \log \log
    n})} $ edges.
\end{theorem}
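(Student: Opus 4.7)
The plan is to exploit the product hypothesis $G \subseteq H \boxtimes P$ to endow $G$ with a rigid \emph{layered tree decomposition}, then design the universal graph via a two-scale hierarchical construction. First I set up the structure. Let $G \in \Q_t$ have $n$ vertices, fix a realization $G \subseteq H \boxtimes P$ with $H$ of treewidth $t$ and $P = p_1 \cdots p_\ell$ a path, and fix a tree decomposition $(T,\{B_x\}_{x \in V(T)})$ of $H$ with $|B_x| \le t+1$. Setting $\tilde B_x = (B_x \times V(P)) \cap V(G)$ gives a tree decomposition of $G$ such that, writing $L_i := V(G) \cap (V(H) \times \{p_i\})$ for the $i$-th \emph{layer}, $|\tilde B_x \cap L_i| \le t+1$ for every $x$ and $i$. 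Each layer induces a subgraph of $H$ (hence treewidth $\le t$), and edges of $G$ go only within a layer or between consecutive layers, with each vertex having at most $3(t+1)-1$ neighbours in each of its own and adjacent layers.

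Next I pick a block parameter $B = 2^{\Theta(\sqrt{\log n \cdot \log\log n})}$ and partition the layers into $\lceil \ell/B \rceil$ consecutive \emph{blocks} of size $B$. View $G$ as a path of blocks, where each block $J$ induces a subgraph $G[J]$ whose bags contain at most $(t+1)B$ vertices. Inside each block, I construct a universal graph $U^{(\mathrm{blk})}_{|J|}$ tailored to $m$-vertex graphs carrying this layered structure on $B$ layers, aiming for $(1+o(1))|J|$ vertices and $|J| \cdot 2^{O(\sqrt{\log|J| \log\log|J|})}$ edges. I would obtain this by a balanced-separator recursion on the tree decomposition: bags of size $\le t+1$ give balanced separators of size $\le t+1$, and the recursion solves (after a careful choice of recursion depth comparable to $\sqrt{\log n/\log\log n}$) to the desired bound, with every vertex encoded by a short address combining its tree-decomposition position and its layer index.

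To stitch blocks together, consecutive block-universal-graphs are connected through an \emph{interface gadget} covering all possible edges between the last layer of one block and the first layer of the next. Since each such layer meets each bag in at most $t+1$ vertices, this interface contributes only $O(t^2 \cdot n / B)$ edges in total, which is absorbed by the $2^{O(\sqrt{\log n \log\log n})}$ slack. Summing over blocks gives vertex count $\sum_J (1+o(1))|J| = (1+o(1))n$ and edge count $\sum_J |J|\cdot 2^{O(\sqrt{\log|J|\log\log|J|})} + O(t^2 n / B) \le t^2 \cdot n \cdot 2^{O(\sqrt{\log n \log\log n})}$.

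\textbf{Main obstacle.} The hardest point is achieving \emph{simultaneously} the very tight vertex bound $(1+o(1))n$ and the near-linear edge bound. A naive product construction $U_H \boxtimes P^\star$ with a universal treewidth-$t$ graph $U_H$ blows up the edges to $\Omega(n \cdot |V(P^\star)|)$, while classical adjacency-labelling-style schemes achieve few edges only at the cost of a polylogarithmic vertex overhead. Reconciling both requires (i) a recursive universal construction for bounded-treewidth graphs that already reaches vertex count $(1+o(1))m$, and (ii) an encoding of each vertex of $G$ by a short address combining its tree-decomposition position, its bag coordinate in $\{1,\dots,t+1\}$ (explaining the $t^2$ factor from pairs of such coordinates), and its layer index, so that $U_n$ only carries edges between addresses corresponding to plausible $G$-adjacencies. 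The bookkeeping that prevents collisions across blocks while keeping the total vertex count $(1+o(1))n$ is where most of the technical work is expected to lie.
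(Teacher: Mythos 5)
Your proposal takes a genuinely different route from the paper, but it has several gaps that I do not see how to close.

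\textbf{Gap 1: the block-level universal graph.} You claim that for a block $J$ spanning $B$ path-layers, a balanced-separator recursion on the tree decomposition yields a universal graph with $(1+o(1))|J|$ vertices and $|J|\cdot 2^{O(\sqrt{\log|J|\log\log|J|})}$ edges. But the relevant separators of $G[J]$ are the product bags $\tilde B_x\cap J$, which have size up to $(t+1)B$, not $t+1$. A Babai-style recursion with separators of size $s$ gives, at best, $O(s\cdot m\log m)$ edges and $\Theta(s\cdot m)$ vertices on an $m$-vertex graph, which with $s = (t+1)B = 2^{\Theta(\sqrt{\log n\log\log n})}$ is far from $(1+o(1))m$ vertices. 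In short, getting down to $2^{O(\sqrt{\log n\log\log n})}$ overhead is \emph{the} hard part of the problem, and ``balanced-separator recursion'' by itself is exactly the classical $O(n^{3/2})$ technique of Babai et al., which you are supposed to beat. The paper instead invokes a nontrivial tool (bulk tree sequences: a chain of almost-balanced BSTs over the rows with short transition codes from one row to the next) and it is precisely this machinery that produces the $2^{O(\sqrt{\log n\log\log n})}$ factor. Your plan has no substitute for it.

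\textbf{Gap 2: the interface gadget.} You assert the interface between two consecutive blocks contributes $O(t^2 n/B)$ edges ``in total''. I do not see the reasoning. The universal graph must anticipate \emph{every} cross-interface adjacency pattern over all $n$-vertex graphs in $\Q_t$. After you build each block's universal graph with its own internal structure, any vertex of $U^{(\mathrm{blk})}_J$ could represent a vertex of $G$ in the last layer of block $J$, and similarly for $U^{(\mathrm{blk})}_{J'}$; there is no bound on the number of interface vertices independent of $|J|,|J'|$. Unless the block-universal construction exposes a controlled ``boundary'' of size $\mathrm{poly}(t)\cdot n/B$ (which would be a strong additional property you have not established), this interface count is unjustified.

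\textbf{Gap 3: the $(1+o(1))n$ vertex count.} You simply assert that the block universal graphs have $(1+o(1))|J|$ vertices, and flag it as the main obstacle, but offer no mechanism. The paper does not try to achieve the tight vertex count directly inside the combinatorial construction. Instead it first builds a universal graph $G'_n$ with $n^{1+o(1)}$ vertices and $n^{1+o(1)}$ edges (via the bulk-tree-sequence machinery, then blowing up by $K_\omega$ with $\omega=O(t\log n)$ using the pathwidth-to-interval-graph step), and then applies a separate \emph{hashing} step: an explicit $(N,d,\lambda)$-expander gives a low-degree bipartite graph $H$ with $|U|=(1+o(1))n$ such that every $n$-subset of $V(G'_n)$ is matchable into $U$; one then contracts along $H$ to get $H_n$ on $(1+o(1))n$ vertices with degree blow-up only $d^2=n^{o(1)}$. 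This last step is essentially orthogonal to the rest and is what actually gives the $(1+o(1))n$ bound; your proposal omits anything playing its role.

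In summary: your decomposition into path-blocks is a plausible-looking alternative framing, but it relocates rather than solves the core difficulty (achieving $2^{O(\sqrt{\log n\log\log n})}$ overhead) and it does not address the vertex-compression step at all. The paper's route is: (1) reduce to interval graphs of clique number $O(t\log n)$ via pathwidth, (2) handle the path dimension with bulk tree sequences encoded into short bitstrings, (3) compress vertex count with an expander-based matching lemma. Each of these three ingredients is doing real work that your sketch does not replicate.
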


It was proved by Dujmovi\'c, Joret, Micek, Morin, Ueckerdt
and  Wood \cite{dujmovic.joret.ea:planarJACM} that every planar graph is
a subgraph of the strong product of a graph of treewidth at most 8
and a path (see also the recent improvement by Ueckerdt, Wood, and Yi~\cite{UWY22}).

\begin{theorem}[\cite{dujmovic.joret.ea:planarJACM}]\label{product-structure}
  The class of planar graphs is a subset of $\Q_8$.
\end{theorem}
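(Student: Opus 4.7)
The plan is to reduce to plane triangulations and then produce an explicit layered path partition whose quotient graph has small treewidth. Since $\Q_8$ is closed under taking subgraphs and every planar graph is a spanning subgraph of some plane triangulation, it suffices to show that every plane triangulation $G$ admits a partition $\mathcal{P}$ of $V(G)$ together with a BFS layering $L_0=\{r\},L_1,\ldots,L_k$ such that (i)~each part $P\in\mathcal{P}$ meets each layer in at most one vertex and spans a contiguous range of layers, and (ii)~the quotient graph $H:=G/\mathcal{P}$ has treewidth at most $8$. Given such data, the map sending $v\in L_i$ to $(P_v,i)$, where $P_v\in\mathcal{P}$ is the part containing $v$, embeds $G$ into $H\boxtimes P_{k+1}$: within-layer edges become edges of type (3) in the strong product, while edges between consecutive layers become edges of type (1) or (2) depending on whether the endpoints lie in the same part.

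For the construction of $\mathcal{P}$, I would fix a planar embedding and a root $r$, compute BFS layers, and then exploit a canonical three-coloured orientation of the triangulation, e.g.\ a Schnyder wood rooted appropriately, to produce three families of monotone paths that together cover $V(G)$. Intersecting these paths with BFS levels and picking a consistent ``leftmost parent'' rule yields vertical paths that visit each layer at most once. The layered partition $\mathcal{P}$ is then read off directly from these canonical paths.

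The main obstacle, and the technical heart of the argument, is bounding the treewidth of the quotient $H$ by $8$. I would establish this by constructing a tree decomposition of $H$ explicitly, peeling the layers $L_0,L_1,\ldots$ from the root outward and maintaining, at each step, a bag containing exactly the parts of $\mathcal{P}$ that straddle the boundary between the already-processed region $G[L_0\cup\cdots\cup L_i]$ and its complement. The crucial geometric fact is that, in a planar triangulation with BFS layers, the outer boundary of $G[L_0\cup\cdots\cup L_i]$ is a union of a bounded number of monotone arcs with respect to the chosen Schnyder orientation, and each arc is covered by a bounded number of parts of $\mathcal{P}$. A careful count of how many parts can simultaneously straddle the interface during a layer transition yields the constant $8$.

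Verifying the counting bound rigorously---tracking how the interface evolves as a new layer is peeled and as new parts of $\mathcal{P}$ are born or die at that transition---is the delicate step. In particular, one must check that the bags along the resulting decomposition tree differ by a bounded amount from parent to child and that every edge of $H$ is covered by some bag, which reduces to checking that both endpoints of a contracted edge live simultaneously on the interface at some moment during the peeling.
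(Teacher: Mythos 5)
Your high-level framework matches the actual strategy in \cite{dujmovic.joret.ea:planarJACM}: reduce to plane triangulations, fix a BFS layering $L_0,\dots,L_k$, partition $V(G)$ into ``vertical'' parts each meeting every layer at most once, map $v\in L_i$ to $(P_v,i)$, and prove that the quotient $H=G/\mathcal{P}$ has treewidth at most $8$. That reduction is correct.

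The gap is in the treewidth bound, and it is a genuine one. You propose a path decomposition of $H$ obtained by ``peeling'' layers and letting the $i$-th bag consist of the parts that straddle the interface between $G[L_0\cup\cdots\cup L_i]$ and its complement. But since the layered-partition requirement forces each part of $\mathcal{P}$ to intersect each layer in at most one vertex, the parts meeting $L_i$ are in bijection with the vertices of $L_i$, and $|L_i|$ can be $\Theta(n)$ in a planar triangulation (think of a triangulated disk rooted at its centre). So the bags you describe are not bounded, and no Schnyder-wood or ``bounded number of monotone arcs'' claim can rescue the count: the bottleneck is that the number of vertical paths crossing a given layer equals the size of that layer, full stop. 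Indeed, $H$ generally does not have bounded pathwidth, so a path decomposition along the layers cannot work; a tree decomposition whose underlying tree genuinely branches is needed. This is precisely where \cite{dujmovic.joret.ea:planarJACM} departs from what you wrote: they take a BFS spanning tree $T$ (no Schnyder wood is involved), define $\mathcal{P}$ as vertical paths in $T$, and build the tree decomposition of $H$ by a recursive, Sperner-lemma-style decomposition of the triangulation into regions each bounded by at most six vertical paths. At each recursive step a ``tripod'' of three new vertical paths is added, splitting the region into at most three subregions again bounded by at most six vertical paths; the bag for the step consists of the six boundary paths plus the three tripod legs, i.e.\ at most nine parts, giving treewidth $8$. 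The recursion tree, not the layer order, is what gives the bounded-width decomposition, and the counting argument lives entirely inside one region between consecutive recursion levels rather than across an entire BFS layer.
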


Moreover, Bose, Morin, and Odak~\cite{bose_et_al:LIPIcs.SWAT.2022.19} gave a linear-time algorithm that given a planar graph $G$, finds a graph $H$ of treewidth at most 8 and an embedding of $G$ in the strong product of $H$ with a path.

\medskip

Note that \cref{product-structure} and \cref{thm:main} directly imply \cref{thm:planar}.
It was proved that \cref{product-structure} can be generalized (replacing $8$ with a
larger constant) to bounded genus graphs, and more generally to
apex-minor free graphs~\cite{dujmovic.joret.ea:planarJACM}, as well as
to $k$-planar graphs and related classes of
graphs~\cite{dujmovic.morin.ea:structure}. Thus it follows that
families of
$n$-vertex graphs in these more general classes also admit universal
graphs with $n^{1+o(1)}$ edges.

\medskip

\noindent\textbf{Induced-universal graphs.} A related problem is to
find an \emph{induced-universal graph} for a family $\F$, which is a
graph that contains all the graphs of $\F$ as \emph{induced} subgraphs. In
this context the problem is usually to minimize the number of vertices of the induced-universal graph~\cite{kannan.naor.ea:implicit}. Recently, Dujmovi\'c, Esperet, Joret, Gavoille, Micek and Morin used \cref{product-structure} to construct an induced-universal
graph with $n^{1+o(1)}$ vertices for the class of $n$-vertex planar
graphs~\cite{AdjacencyLabellingPlanarFOCS, AdjacencyLabellingPlanarJACM}. Since an induced-universal
graph for a class $\F$ is also universal for $\F$, their graph is
universal for the class of $n$-vertex planar graphs. However, while that graph has a near-linear number of vertices, it is quite dense, it has order of $n^2$ edges. Thus, it is not directly useful in the context of minimizing the number of edges.

Nevertheless, in this paper we reuse key ideas and techniques introduced in~\cite{AdjacencyLabellingPlanarJACM}.
Very informally, a central idea in~\cite{AdjacencyLabellingPlanarJACM} is the notion of {\em bulk tree sequences}, which is used to efficiently `encode' the rows from the product structure using almost perfectly balanced binary search trees, in such a way that the trees undergo minimal changes when moving from one row to the next one. (These tree sequences are described in the next section.)

Given that, for $n$-vertex planar graphs, there exist (1) a universal graph with a near-linear number of edges, and (2) an induced-universal graph with a near-linear number of vertices, it is natural to wonder if these two properties can be achieved simultaneously.  In the second part of this paper, we show that this can be done.

\begin{theorem}\label{thm:planar_induced}
  The family of $n$-vertex planar graphs has an induced-universal graph with at
  most $n\cdot 2^{O(\sqrt{\log n \cdot \log \log n})}$ edges and vertices.
\end{theorem}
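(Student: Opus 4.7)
By \cref{product-structure}, it suffices to construct an induced-universal graph for the family of $n$-vertex subgraphs of products $H\boxtimes P$ with $\mathrm{tw}(H)\le 8$. The plan is to fuse the two prior constructions: keep the vertex set from the induced-universal graph of~\cite{AdjacencyLabellingPlanarJACM}, which already uses only $n^{1+o(1)}$ vertices, and replace its dense edge set by the bulk-tree-sequence based sparse structure developed for \cref{thm:main}. Concretely, each vertex of the universe carries a syntactic label recording its row index in $P$, the bag(s) of the tree decomposition of $H$ that contain it, and its position inside each of the bulk tree sequences encoding those bags. The number of syntactically valid labels is the same as in~\cite{AdjacencyLabellingPlanarJACM}, giving $n^{1+o(1)}$ vertices.

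The adjacency rule combines two ingredients: two labels are joined by an edge iff (i) they satisfy the adjacency predicate of~\cite{AdjacencyLabellingPlanarJACM} (which captures induced adjacency in $H\boxtimes P$) \emph{and} (ii) their row indices differ by at most one. Condition~(ii) is harmless for completeness, because vertices in non-consecutive rows are already non-adjacent in $H\boxtimes P$. Counting the remaining edges reduces to the estimate at the heart of \cref{thm:main}: within a single row, or between two consecutive rows, the bulk tree sequence structure together with the treewidth-$8$ decomposition of $H$ yields only $2^{O(\sqrt{\log n\cdot\log\log n})}$ neighbours per vertex. Summing over the $n^{1+o(1)}$ vertices gives the claimed edge bound.

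The main obstacle is verifying that the sparsification does not destroy induced-universality. In \cref{thm:main} this issue is invisible: missing edges only affect non-edges of the hosted graph. In the induced setting one must argue that if $u$ and $v$ are non-adjacent vertices of an $n$-vertex planar graph $G$ embedded via \cref{product-structure}, then the labels assigned to them are non-adjacent in the constructed graph. Pairs whose row indices differ by more than one are handled by condition~(ii); pairs in the same or in consecutive rows are handled by the induced-faithful adjacency predicate of~\cite{AdjacencyLabellingPlanarJACM}, which was designed exactly to realise induced adjacency in $H\boxtimes P$ via the bulk tree sequences. The substantive technical work is then to check that this predicate remains induced-faithful when intersected with the row-proximity restriction, and that the per-vertex degree bound from the proof of \cref{thm:main} carries over to the refined vertex set inherited from~\cite{AdjacencyLabellingPlanarJACM}; combining the two gives both the edge and vertex bounds in the statement.
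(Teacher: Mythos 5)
Your proposal has a genuine gap. The proposed ``fusion'' — keep the labels from~\cite{AdjacencyLabellingPlanarJACM} and prune the edge set to pairs whose row indices differ by at most one — is vacuous. The adjacency predicate $A$ of~\cite{AdjacencyLabellingPlanarJACM} already begins by reading $\alpha(y_1)$ and $\alpha(y_2)$ and returns non-adjacency whenever $|y_1-y_2|\ge 2$ (case~(d) of the adjacency tester). So your condition~(ii) removes no edges, and the graph you build is exactly $I_n$, which Section~\ref{density-lower-bound} shows has $\Omega(n^2)$ edges. Worse, the $\Omega(n^2)$ example there uses the single-vertex path $P_1$: every label in that example has the same row index, so no amount of row-proximity filtering can help.

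The density problem lies elsewhere. The label stores $\sigma_y(P_y(v))$, where $P_y(v)$ must pass through $x_y(w)$ for every $w$ in the family clique $C_v$, and some of these nodes can sit at $T_y$-depth far below $d_y(x_y(v))$. Across different host graphs $G$, the same $(v,y)$ receives many distinct labels that agree on the prefix up to depth $d_y(x_y(v))$ but diverge below it; meanwhile a fixed neighbour's label is adjacent to all of them, producing a large complete bipartite subgraph. The missing idea is the $\textsc{Fixup}$ procedure: replace $x_y$ by a modified assignment $x'_y$ that caps the depth jump along $H$-parent relations at one, giving the strengthened property~\psref{clique-path-ii}. This lets the label carry $\sigma_y(x_y(v))$ plus an $O(1)$-length suffix \nlref{new-l8} rather than a possibly $\Theta(\log n)$-long extension. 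The price is that the small-bags property degrades from $O(t\log n)$ to $O(t(\log n)^{t+2})$ (\cref{small-bags-ii-lem}), which is exactly where the $(\log n)^{O(t^2)}$ factor in \cref{thm:main_induced} comes from. For $t=8$ this is still $n^{o(1)}$, so \cref{thm:planar_induced} follows, but your claimed per-vertex degree bound of $2^{O(\sqrt{\log n\cdot\log\log n})}$ is not achievable even by the corrected construction. Without $\textsc{Fixup}$, the proposal reproduces the dense $I_n$ and the edge count does not come down.
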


In the same way that \cref{thm:planar} is a special case of \cref{thm:main}, \cref{thm:planar_induced} is obtained as a special case of \cref{thm:main_induced}:

\begin{theorem}\label{thm:main_induced}
Fix a positive integer $t$.
Then the family of $n$-vertex graphs in $\Q_t$ has an induced-universal graph
with at most $n\cdot 2^{O(\sqrt{\log n \cdot \log \log n})} \cdot (\log n)^{O(t^2)}$ edges and vertices.
\end{theorem}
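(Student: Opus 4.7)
The plan is to obtain the induced-universal graph as a blow-up of the universal graph constructed in \cref{thm:main}: each vertex $v$ of that graph is replaced by a set of copies $(v,\sigma)$, one for each possible local \emph{signature} $\sigma$ which records, at the position $v$, which of $v$'s prospective neighbours in the universal graph are actual neighbours in the embedded subgraph $G$. Two vertices $(v,\sigma)$ and $(w,\tau)$ are adjacent in the induced-universal graph if and only if $vw$ is an edge of the universal graph of \cref{thm:main} and the signatures $\sigma,\tau$ are mutually compatible (each records the other endpoint as a neighbour). The task then reduces to bounding the number of signatures per vertex by $(\log n)^{O(t^2)}$, since the vertex count of the induced-universal graph is the vertex count of \cref{thm:main} multiplied by this factor, and the edge count scales similarly.

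To set up the signatures, recall that a graph $G\in\Q_t$ sits inside $H\boxtimes P$ with $H$ of treewidth~$t$. A vertex $(x,y)$ of $G$ has potential neighbours only in the three $P$-layers $y-1,y,y+1$ and with $H$-coordinates in $N_H[x]$. Fixing a tree decomposition of $H$ of width $t$, the $H$-neighbours of $x$ split into $O(t)$ groups, one for each edge of the decomposition tree incident to a bag containing $x$. The construction of \cref{thm:main} represents the tree decomposition at each $P$-layer by a \emph{bulk tree}, an almost perfectly balanced binary search tree as in~\cite{AdjacencyLabellingPlanarJACM}, and these trees change only locally between consecutive layers. Consequently, for each vertex $(x,y)$ of the universal graph, the positions of its prospective neighbours in the universal graph fit into $O(t^2)$ \emph{corridors} (one per adjacent bag and per layer shift), each corridor being a root-to-leaf path in a bulk tree and hence containing at most $O(\log n)$ positions.

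The key counting step is then the following: a signature at $(x,y)$ specifies, for each of the $O(t^2)$ corridors, which subset of at most $t$ positions along that corridor correspond to neighbours of $(x,y)$ in $G$. Since each corridor has $O(\log n)$ positions and at most $t$ are selected, there are at most $(\log n)^{O(t)}$ choices per corridor, and therefore $(\log n)^{O(t^2)}$ signatures in total. The blow-up of the universal graph of \cref{thm:main} therefore has at most $n\cdot 2^{O(\sqrt{\log n\cdot\log\log n})}\cdot(\log n)^{O(t^2)}$ vertices, and since the degree of each copy scales by the same factor, the edge count obeys the same bound.

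The main obstacle I anticipate is proving that the signatures truly suffice to reconstruct every induced subgraph, i.e.\ that the mutual-compatibility condition on signatures agrees with adjacency in $G$. The subtlety is that, as one moves from layer $y$ to layer $y+1$, the bulk tree evolves and corridors may shift, so the signature at $(x,y)$ must simultaneously describe adjacencies into the layer-$y$ bulk tree and into the (slightly different) layer-$(y+1)$ bulk tree, and the same adjacency must be consistently recorded from both endpoints. Establishing this cross-layer consistency, while keeping the signature size within $(\log n)^{O(t^2)}$ and making the assignment explicit enough to yield a polynomial-time construction of the induced-universal graph, is where I expect the bulk of the technical work to lie.
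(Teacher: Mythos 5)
Your high-level plan---blow up a sparse universal graph by attaching to each vertex a ``signature'' recording which of its prospective neighbours are actual neighbours---is a natural idea and is close in spirit to what the paper does (the paper builds its induced-universal graph from a refined adjacency labelling scheme, where each label is the ``main coordinate'' plus a bounded amount of side information). However, the key counting step in your proposal has a genuine gap, and in fact the paper devotes Section~\ref{density-lower-bound} to explaining precisely why this step fails without a further modification.

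Your claim is that the prospective neighbours of a fixed vertex $u=\zeta(v,y)$ of the universal graph $G_n'$ fit into $O(t^2)$ corridors, each a root-to-node path of $O(\log n)$ positions in a bulk tree, so that a signature needs only $(\log n)^{O(t^2)}$ bits. The problem is that the corridor is not determined by $u$: the family clique $C_v$ maps, via $x_y$, onto a root-to-node path $P_y(v)$ in $T_y$ whose bottom node can lie at depth $\Theta(\log n)$ \emph{below} $x_y(v)$, and that bottom node is not a function of $\zeta(v,y)$; it varies over $\Omega(n)$ positions as the host graph $G$ varies. Concretely, $u$ can have $\Omega(n)$ neighbours $(x',\bar y,z')$ in $G_n'$ with $x\prec x'$ (all extensions of $x$ of bounded length), and different embeddings of different graphs $G$ realize different $\Theta(t)$-element subsets of these as actual neighbours. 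So the number of distinct signatures at $u$ is $n^{\Omega(1)}$, not $(\log n)^{O(t^2)}$, and the blow-up has superlinear vertex count. This is exactly the phenomenon shown in the tree example of Section~\ref{density-lower-bound}, where the label $\ell_i(v)$ already takes $\Omega(n)$ values because $\sigma_1(P_1(v))$ records the deep position of $w\in C_v$, producing a $\Theta(n)\times\Theta(n)$ complete bipartite subgraph of the naive induced-universal graph.

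The missing ingredient is the step that the paper introduces to make the corridor short and canonical: the $\textsc{Fixup}$ procedure of Section~\ref{modifications}, which modifies $x_y$ to a new function $x_y'$ satisfying $d_y(x_y'(w))\le d_y(x_y'(v))+1$ for every $H$-parent $w$ of $v$. After this fix, the strengthened property \psref{clique-path-ii} ensures $P_y'(v)$ has length at most $d_y(x_y(v))+1$, so the corridor really is a single short path determined (up to one extra bit, stored in (NL8)) by the main coordinate $\sigma_y(x_y(v))$. The price is that the ``bags'' $B'_{y,x}=\{v: x_y'(v)=x\}$ are no longer of size $O(t\log n)$ but only $O(t(\log n)^{t+2})$ (Lemma~\ref{small-bags-ii-lem}, via the polynomial bound on weakly $d$-reachable ancestors in $t$-trees), and this is the source of the $(\log n)^{O(t^2)}$ factor in the final bound. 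Without something playing the role of $\textsc{Fixup}$, the corridor argument as you stated it does not give the claimed bound on the number of signatures, and the blow-up is not sparse.
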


The construction in \cref{thm:main_induced} is based on a non-trivial
modification of the construction of induced-universal graphs
in~\cite{AdjacencyLabellingPlanarJACM} and reuses ideas from the
construction of universal graphs in the first part of the current
paper. It is significantly more complicated than the construction used
for \cref{thm:planar}, is more tightly coupled with the labelling
scheme in \cite{AdjacencyLabellingPlanarJACM}, and the end result has
a greater dependence on $t$ (a $t^2$ factor in \cref{thm:main} is
replaced by a $(\log n)^{O(t^2)}$ factor in
\cref{thm:main_induced}). Moreover, the classical techniques that
allow us to reduce the number of vertices from $n^{1+o(1)}$ to
$(1+o(1))n$ in \cref{thm:main} do not apply to induced-universal
graphs, so decreasing further the number of vertices in
\cref{thm:main_induced} seems to require completely new ideas.

\medskip

\noindent\textbf{Paper organization.}
The first part of the paper consists of Sections~\ref{sec:prel} and~\ref{sec:universal}, and is devoted to proving \cref{thm:main}.
In the second part of the paper, Section~\ref{sec:universal}, we start by recalling the construction of induced-universal graphs from~\cite{AdjacencyLabellingPlanarJACM}.
Then, we explain why these graphs are too dense, and describe how to modify the construction to achieve a near-linear number of edges.

\section{Preliminaries}\label{sec:prel}

\subsection{Graph products}\label{sec:gp}

Given two graphs $G_1,G_2$, and $v_1\in V(G_1)$, the set
$\{(v_1,v_2)\mid v_2\in V(G_2)\}$ is called a \emph{column} of
$G_1\boxtimes G_2$. Similarly, for $v_2\in V(G_2)$, the set $\{(v_1,v_2)\mid v_1\in V(G_1)\}$ is called a \emph{row} of
$G_1\boxtimes G_2$.

\begin{lemma}\label{obs:stproduct}
Let $G_1$ and $G_2$ be two graphs, and let $H$ be an $n$-vertex
subgraph of $G_1\boxtimes G_2$. Then $G_1$ and $G_2$ contain induced
subgraphs $G_1'$ and $G_2'$ with at most $n$ vertices such that
$H$ is a subgraph of $G_1'\boxtimes G_2'$, and each row and column of $G_1'\boxtimes G_2'$ contains at least one vertex of $H$.
\end{lemma}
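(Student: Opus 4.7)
The plan is to take $G_1'$ and $G_2'$ to be the projections of $V(H)$ onto the two factors. Concretely, fix any embedding of $H$ in $G_1\boxtimes G_2$, and for $i\in\{1,2\}$ let $\pi_i\colon V(G_1\boxtimes G_2)\to V(G_i)$ be the projection on the $i$-th coordinate. Define $G_i'$ to be the subgraph of $G_i$ induced by $\pi_i(V(H))$. Since $|V(H)|=n$, each projection has at most $n$ elements, so $|V(G_i')|\le n$ as required.

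Next, I would verify that the fixed copy of $H$ in $G_1\boxtimes G_2$ is already contained in $G_1'\boxtimes G_2'$. Every vertex of the copy is of the form $(v_1,v_2)$ with $v_j\in \pi_j(V(H))=V(G_j')$, so it lies in $V(G_1'\boxtimes G_2')=V(G_1')\times V(G_2')$. For each edge of this copy of $H$, whose endpoints are $(x_1,y_1)$ and $(x_2,y_2)$, the relevant adjacency conditions in the strong product involve edges of $G_1$ between $x_1,x_2\in V(G_1')$ and edges of $G_2$ between $y_1,y_2\in V(G_2')$; since $G_j'$ is the \emph{induced} subgraph of $G_j$ on $\pi_j(V(H))$, all these edges are retained. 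Hence this edge is also present in $G_1'\boxtimes G_2'$, and the fixed copy of $H$ is indeed a subgraph of $G_1'\boxtimes G_2'$.

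Finally, the row/column property follows directly from the definition of $G_1'$ and $G_2'$. A row of $G_1'\boxtimes G_2'$ is determined by a vertex $v_2\in V(G_2')=\pi_2(V(H))$, so by definition of the projection there exists $v_1\in V(G_1')$ with $(v_1,v_2)$ belonging to the fixed copy of $H$; thus $(v_1,v_2)$ is a vertex of that copy lying in the chosen row. The analogous argument applies to columns, using $\pi_1$.

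There is no genuine obstacle here: the only subtlety is to make sure the copy of $H$ we analyze is the one inherited from the original embedding in $G_1\boxtimes G_2$, and to use that $G_j'$ is induced (rather than an arbitrary subgraph) so that no edge of $H$ is accidentally lost when we pass to $G_1'\boxtimes G_2'$.
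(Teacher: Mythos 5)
Your proof is correct and takes essentially the same approach as the paper: both fix an embedding of $H$ and restrict $G_1,G_2$ to the vertices that actually appear as first (respectively second) coordinates of that copy, which is exactly the induced subgraph on the projections. Your write-up is a bit more explicit about why the copy survives into $G_1'\boxtimes G_2'$ and why the row/column property then follows, but the construction and argument are the same.
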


\begin{proof}
If there is a vertex $x\in V(G_1)$ such that no vertex of $G_1\boxtimes G_2$ of the form $(x,y)$ is included in $H$, then $H$ is a subgraph of
$(G_1-x)\boxtimes G_2$. So, by considering induced subgraphs $G_1'$
and $G_2'$ of $G_1$ and $G_2$ if necessary, we may assume that each column (and by symmetry each row) of $G_1'\boxtimes G_2'$ contains a vertex of
$H$. It follows that $G_1'$ and $G_2'$ contain at most $n$ vertices.
\end{proof}

We deduce the following result, which will be useful in the proof of
our main result.

\begin{lemma}\label{obs:cliqueproduct}
Let $n$ be an integer, let $H_1$ be a graph with at least $n$
vertices, and let $G_1$ be a graph that is
universal for the family of $n$-vertex subgraphs of $H_1$. Then for
any graph $H_2$, the graph $G_1\boxtimes H_2$ is universal for the
family of $n$-vertex subgraphs of $H_1\boxtimes H_2$.
\end{lemma}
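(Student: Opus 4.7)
The plan is to reduce the problem to \cref{obs:stproduct} and then invoke the universality of $G_1$ with a minor padding trick. Let $H$ be an arbitrary $n$-vertex subgraph of $H_1\boxtimes H_2$; we need to embed $H$ into $G_1\boxtimes H_2$. First I would fix some embedding of $H$ in $H_1\boxtimes H_2$ and apply \cref{obs:stproduct} to it: this produces induced subgraphs $H_1'\subseteq H_1$ and $H_2'\subseteq H_2$, each of order at most $n$, such that $H$ is a subgraph of $H_1'\boxtimes H_2'$.

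Next I would use the universality of $G_1$ to embed $H_1'$ into $G_1$. Here one has to be slightly careful: the universality hypothesis concerns $n$-vertex subgraphs of $H_1$, while $H_1'$ may have strictly fewer than $n$ vertices. Since $|V(H_1)|\geq n$ by assumption, I would pad $H_1'$ to an $n$-vertex (not necessarily induced) subgraph $H_1''$ of $H_1$ by adding an arbitrary set of $n-|V(H_1')|$ vertices from $V(H_1)\setminus V(H_1')$ as isolated vertices. Applying universality of $G_1$ to $H_1''$ gives $H_1''\subseteq G_1$, and in particular $H_1'\subseteq G_1$.

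Finally, it is immediate from the definition of the strong product that taking a subgraph in the first coordinate is monotone: $H_1'\subseteq G_1$ and $H_2'\subseteq H_2$ together yield $H_1'\boxtimes H_2'\subseteq G_1\boxtimes H_2$. Chaining this with $H\subseteq H_1'\boxtimes H_2'$ completes the embedding $H\subseteq G_1\boxtimes H_2$.

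There is no real obstacle here; the argument is essentially bookkeeping once \cref{obs:stproduct} is available. The only mildly delicate point is the padding step, which is why the hypothesis $|V(H_1)|\geq n$ appears in the statement, and the monotonicity of $\boxtimes$ in each argument, which is direct from the three adjacency rules of the strong product.
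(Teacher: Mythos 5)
Your proposal is correct and follows essentially the same route as the paper: apply \cref{obs:stproduct} to shrink the first factor to at most $n$ vertices, pad up to exactly $n$ vertices using the hypothesis $|V(H_1)|\geq n$, invoke universality of $G_1$, and conclude by monotonicity of $\boxtimes$. The only cosmetic difference is that you retain $H_2'$ from \cref{obs:stproduct}, whereas the paper simply works with $H_2$; this has no effect on the argument.
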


\begin{proof}
Let $G$ be an $n$-vertex subgraph of $H_1\boxtimes H_2$. By
\cref{obs:stproduct} we can assume that there is a subgraph
$H_1'$ of $H_1$  with at most $n$ vertices,
such that $G$ is a subgraph of $H_1'\boxtimes H_2$. By adding
vertices of $H_1$ to $H_1'$ if necessary, we can assume that $H_1'$
contains precisely $n$ vertices, and is thus a subgraph of  $G_1$. It
follows that $G$ is a subgraph of $G_1\boxtimes H_2$, as desired.
\end{proof}

\subsection{Binary Search Trees}\label{sec:bst}

A \emph{binary tree} $T$ is a rooted tree in which each node except
the root is either the \emph{left} or \emph{right} child of its parent
and each node has at most one left and at most one right child.  For
any node $x$ in $T$, $P_T(x)$ denotes the path from the root of $T$ to
$x$.  The \emph{length} of a path $P$ is the number of edges in $P$,
i.e., $|P|-1$.  The \emph{depth}, $d_T(x)$ of $x$ is the length of
$P_T(x)$.  The \emph{height} of $T$ is $\h(T):=\max_{x\in V(T)}
d_T(x)$.
A node $x$ in $T$ is a \emph{$T$-ancestor} of a node $y$ in $T$ if $x\in V(P_T(y))$. If $x$ is a $T$-ancestor of $y$ then $y$ is a \emph{$T$-descendant} of $x$. A $T$-ancestor $x$ of $y$ is a \emph{strict $T$-ancestor} if $x\neq y$.  We use $\prec_T$ to denote the strict $T$-ancestor relation and $\preceq_T$ to denote the $T$-ancestor relation.
Let $P_T(x_r)=x_0,\dots,x_{r}$ be a path from the root $x_0$ of $T$ to some node $x_r$ (possibly $r=0$).  Then the \emph{signature} of $x_r$ in $T$, denoted $\sigma_T(x_r)$ is a binary string $b_1,\dots,b_r$ where $b_i=0$ if and only if $x_{i}$ is the left child of $x_{i-1}$.
Note that the signature of the root of $T$ is the empty string.

A \emph{binary search tree} $T$ is a binary tree whose node set $V(T)$
consists of distinct real numbers and that has the \emph{binary search
  tree property}:  For each node $x$ in $T$, $z<x$ for each node $z$
in $x$'s left subtree and $z>x$ for each node $z$ in $x$'s right
subtree.

\medskip

Let $\log x:=\log_2 x$ denote the binary logarithm of $x$.
We will use the following standard facts about binary search trees, which were also used in~\cite{AdjacencyLabellingPlanarJACM}.

\begin{lemma}[Lemma 5 in~\cite{AdjacencyLabellingPlanarJACM}]\label{lem:biased-bst}
  For any finite $S\subset \R$ and any function $w:S\to\R^+$, there exists a binary search tree $T$ with $V(T)=S$ such that, for each $y\in S$, $d_T(y)\le\log(W/w(y))$, where $W:=\sum_{y\in S} w(y)$.
\end{lemma}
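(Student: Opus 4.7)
The plan is to argue by induction on $|S|$, using a classical weight-balanced choice of root. When $|S|=1$ the single-node tree $T$ has $d_T(y)=0=\log(W/w(y))$, so the base case is immediate.

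For the inductive step, sort $S$ as $y_1<\cdots<y_n$ and let $W_k:=\sum_{i\le k}w(y_i)$. Let $k$ be the smallest index for which $W_k>W/2$, which is well-defined since $W_n=W$. Make $y_k$ the root of $T$, with $S_L:=\{y_1,\ldots,y_{k-1}\}$ and $S_R:=\{y_{k+1},\ldots,y_n\}$ to be placed in the left and right subtrees, respectively. By the minimality of $k$, $W_L:=\sum_{y\in S_L} w(y)=W_{k-1}\le W/2$, and $W_R:=\sum_{y\in S_R} w(y)=W-W_k<W/2$. Apply the induction hypothesis to $(S_L,w)$ and $(S_R,w)$ to obtain binary search trees $T_L$ and $T_R$ with the desired depth bounds relative to $W_L$ and $W_R$, and attach them as the left and right subtrees of $y_k$; the binary search tree property for $T$ is preserved since every element of $S_L$ is less than $y_k$ and every element of $S_R$ is greater.

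For the depth bound, the root $y_k$ trivially satisfies $d_T(y_k)=0\le\log(W/w(y_k))$ because $w(y_k)\le W$. For each $y\in S_L$,
\[d_T(y)=d_{T_L}(y)+1\le\log(W_L/w(y))+1=\log(2W_L/w(y))\le\log(W/w(y)),\]
where the final inequality uses $2W_L\le W$; the case $y\in S_R$ is symmetric. No genuine obstacle arises here: the whole argument turns on the observation that splitting the total weight roughly in half at the root lets the additive $+1$ in depth be absorbed by the multiplicative factor of $2$ inside the logarithm, which is exactly why the choice of $k$ as the smallest index with $W_k>W/2$ is the right one.
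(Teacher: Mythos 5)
Your proof is correct and is the standard weight-balanced (sometimes called ``biased'') binary search tree construction for this bound. The paper does not reproduce a proof of this lemma (it simply imports Lemma~5 from~\cite{AdjacencyLabellingPlanarJACM}), but your argument --- choosing the root so that each side carries at most half the total weight, then absorbing the $+1$ in depth into a factor of~$2$ inside the logarithm --- is exactly the classical proof of that cited lemma, and every step checks out, including the minimality argument giving $W_L\le W/2$ and $W_R<W/2$ and the trivial handling of an empty $S_L$ or $S_R$.
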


\begin{observation}[Observation 6 in~\cite{AdjacencyLabellingPlanarJACM}]\label{obs:successor-encoding}
  Let $T$ be a binary search tree and let $x,y$ be nodes in $T$ such that $x<y$ and there is no node $z$ in $T$ such that $x<z<y$, i.e., $x$ and $y$ are consecutive in the sorted order of $V(T)$.  Then
  \begin{enumerate}
    \item (if $x$ has no right child) $\sigma_T(y)$ is obtained from $\sigma_T(x)$ by removing all trailing 1's and the last 0; or
    \item (if $x$ has a right child) $\sigma_T(y)$ is obtained from $\sigma_T(x)$ by appending a 1 followed by $d_T(y)-d_T(x)-1$ 0's.
  \end{enumerate}

Therefore, for each $\sigma \in \{0,1\}^*$ and integer $h$ such that
$|\sigma| \leq h$, there exists a set $L(\sigma, h)$ of bitstrings in $\{0,1\}^*$, each of length at most $h$, with $|L(\sigma, h)| \leq h+1$ such that for every binary search tree $T$ of height at most $h$ and for every two consecutive nodes $x, y$ in the sorted order of $V(T)$, we have $\sigma_T(y) \in L(\sigma_T(x),h)$.
\end{observation}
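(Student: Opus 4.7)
The plan is to analyze the classical in-order successor algorithm in a binary search tree and translate its effect directly into an operation on the signature. Once the two cases of the first part are established, the bound on $|L(\sigma,h)|$ will follow from an easy count.

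First I would prove the two bullets by recalling that the in-order successor $y$ of $x$ is either the leftmost node in the right subtree of $x$ (when $x$ has a right child), or the lowest strict $T$-ancestor $z$ of $x$ such that $x$ lies in the left subtree of $z$ (when $x$ has no right child). In the first case, the root-to-$y$ path extends $P_T(x)$ by one right edge (contributing a $1$) followed by $d_T(y)-d_T(x)-1$ left edges (each contributing a $0$), which gives the second bullet. In the second case, walking from $x$ up to $z$ traverses a (possibly empty) maximal sequence of right-child edges before crossing a final left-child edge; along $\sigma_T(x)$ these correspond exactly to a trailing block of $1$'s followed by one final $0$, and deleting them yields $\sigma_T(z)=\sigma_T(y)$. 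This establishes the first bullet.

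Next I would derive the bound $|L(\sigma,h)|\leq h+1$. For a fixed $\sigma$ with $|\sigma|\leq h$, the ``no right child'' case contributes at most one bitstring, namely the string obtained from $\sigma$ by removing all trailing $1$'s and the last $0$ (if such a $0$ exists). The ``right child'' case contributes one bitstring $\sigma\cdot 1\cdot 0^{k}$ for each integer $k\in\{0,1,\ldots,h-|\sigma|-1\}$, all of length at most $h$. Summing these contributions yields at most $(h-|\sigma|)+1\leq h+1$ bitstrings, and taking $L(\sigma,h)$ to be exactly this explicit set of candidates does the job.

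I do not anticipate a serious conceptual obstacle; the only care needed is with edge cases in the string manipulation. One must check that when $x$ is the rightmost node of $T$ the first bullet is vacuous (there is no successor), that when $\sigma$ consists entirely of $1$'s the first bullet likewise contributes nothing to $L(\sigma,h)$, and that when $|\sigma|=h$ the second bullet contributes nothing, guaranteeing that every element of $L(\sigma,h)$ has length at most $h$. Beyond these bookkeeping checks the argument is entirely routine.
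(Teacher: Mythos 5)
Your proof is correct, and it takes the natural (and essentially unique) approach: translate the two cases of the in-order successor algorithm into operations on signatures, then enumerate the resulting candidates to bound $|L(\sigma,h)|$. The paper does not reprove this statement; it cites it as Observation~6 of \cite{AdjacencyLabellingPlanarJACM} and the derivation of $L(\sigma,h)$ from the two bullets is treated as immediate, so there is no proof in the present paper to compare against line by line. Your handling of the count, with one string from the ``no right child'' case and at most $h-|\sigma|$ strings $\sigma\cdot 1\cdot 0^{k}$ from the ``right child'' case, matches the stated bound $h+1$, and your explicit attention to the edge cases (rightmost node, all-ones $\sigma$, $|\sigma|=h$) is exactly the care the argument requires. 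No gaps.
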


The following lemma from~\cite{AdjacencyLabellingPlanarJACM} is a key
tool in our proof.

\begin{lemma}[Lemmas 8, 25 and 27 in~\cite{AdjacencyLabellingPlanarJACM}]\label{lem:tree_sequence}
  Let $n$ be a positive integer and define $k=\max(5,\lceil\sqrt{\log
        n / \log\log n}\rceil)$. Then there exists a function $B:(\{0,1\}^*)^2\to\{0,1\}^*$ such that, for
  any finite sets $S_1,\dots,S_h\subset\R$ with $\sum_{y=1}^h |S_y|=n$, there exist binary search
  trees $T_1,\dots,T_h$ such that
  \begin{compactenum}[(1)]
    \item \label{pretend_appear} for each $y\in\{1,\dots,h-1\}$, $V(T_y)\supseteq S_{y}\cup S_{y+1}$, and $V(T_h)\supseteq S_h$;
    \item \label{bound_sum} $\sum_{y=1}^h |V(T_y)|\le 4\sum_{y=1}^h |S_y|=4n$; \hfill
      {\emph{\cite[Lemma 8]{AdjacencyLabellingPlanarJACM}}}
      \item \label{bound_height} for each $y\in\{1,\dots,h\}$, $\h(T_y)\le \log|V(T_y)| +
        O\left(k+k^{-1} \log|V(T_y)|\right).$
\hfill {\emph{\cite[Lemma 25]{AdjacencyLabellingPlanarJACM}}}
        \item  \label{bound_nu}
        for each $y\in\{1,\dots,h-1\}$, and each $z\in V(T_y)\cap V(T_{y+1})$, there exists $\nu_y(z)\in\{0,1\}^*$ with $|\nu_y(z)| = O(k\log (\h(T_y)))$ such that $B(\sigma_{T_y}(z), \nu_y(z)) = \sigma_{T_{y+1}}(z)$.
        \hfill {\emph{\cite[Lemma 27]{AdjacencyLabellingPlanarJACM}}}
  \end{compactenum}
\end{lemma}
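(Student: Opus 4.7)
The plan is to construct the trees $T_1,\dots,T_h$ from a single global weighted skeleton on $\bigcup_y S_y$ so that consecutive trees coincide almost everywhere on their shared vertex set. For conditions~(1) and~(2), I would set $V(T_y):=S_y\cup S_{y+1}$ for $y<h$ and $V(T_h):=S_h$; then every element of $S_y$ is counted at most twice in $\sum_y |V(T_y)|$, giving $\sum_y |V(T_y)|\le 2n$, with plenty of slack for auxiliary vertices that might be introduced below to enforce balance.

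For condition~(3), I would build each $T_y$ using the biased BST of \cref{lem:biased-bst}. Uniform weights give height at most $\log|V(T_y)|$, but to enable~(4) a more rigid shape is needed. My approach is a hierarchical chunking: sort $V(T_y)$, partition it into consecutive \emph{chunks} of size about $2^k$, build an almost perfectly balanced BST on chunk representatives, and hang a balanced BST for each chunk beneath its representative. Iterating the chunking $O(k)$ times yields total height $\log|V(T_y)|+O(k+k^{-1}\log|V(T_y)|)$, since the tree at the coarsest level has roughly $|V(T_y)|/2^k$ nodes and each subsequent level adds at most $k+O(1)$ to the height of the residual part.

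The main obstacle is~(4). My plan is to define the chunking once and for all from the global sort order of $\bigcup_y S_y$, so that each $T_y$ is obtained by restricting the global hierarchy to $V(T_y)$ and rebalancing locally within each chunk. Between $T_y$ and $T_{y+1}$ only the elements of $(S_{y-1}\setminus S_{y+1})\cup(S_{y+2}\setminus S_y)$ enter or leave, so at each of the $O(k)$ hierarchical levels only $O(1)$ chunks near any given surviving element $z$ are disturbed. Consequently the signature $\sigma_{T_{y+1}}(z)$ differs from $\sigma_{T_y}(z)$ in at most $O(k)$ contiguous blocks of bits; each block can be located inside $\sigma_{T_y}(z)$ using $O(\log \h(T_y))$ bits and replaced by $O(1)$ new bits. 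Recording these $O(k)$ edits yields $\nu_y(z)$ of length $O(k\log \h(T_y))$, and the universal function $B$ is simply the edit-replay procedure that applies them to $\sigma_{T_y}(z)$.

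The delicate technical point, which I expect to be the hardest part, is ensuring that the chunking and rebalancing are globally coherent so that the number of affected blocks is $O(k)$ rather than $\Omega(k\cdot\log|V(T_y)|)$. This forces the entire hierarchical decomposition of $T_y$ to be a deterministic function of $V(T_y)$ alone (together with fixed global chunk boundaries), so that the only source of change between rows is the small symmetric difference of consecutive vertex sets; the within-chunk rebalancing must also be chosen canonically (for instance via a weight-balanced rule using the ranks in the global sort) so that its local perturbations can be absorbed into the $O(1)$ edits per affected level.
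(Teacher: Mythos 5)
This lemma is not proved in the paper at all: the bracketed attribution in the lemma header and the three \emph{\cite{...}} tags on items (2)--(4) mean it is imported verbatim from Lemmas 8, 25 and 27 of \cite{AdjacencyLabellingPlanarJACM}, and the present paper treats the ``bulk tree sequence'' construction as a black box (it only adds \cref{obs:lambda} as a trivial corollary). So there is no in-paper proof to compare yours against, and what I can do is assess your sketch on its own terms.

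Your sketch has the right flavour --- build each $T_y$ from a globally canonical hierarchical decomposition so that consecutive trees agree ``almost everywhere,'' trade $O(k)$ extra height for short transition codes, and have $\nu_y(z)$ record a bounded number of edits to $\sigma_{T_y}(z)$ each located by $O(\log\h(T_y))$ bits. But the step you flag as ``the delicate technical point'' is precisely where the argument is missing, and it does not close. With $V(T_y)=S_y\cup S_{y+1}$, the symmetric difference $V(T_y)\,\triangle\,V(T_{y+1})=\bigl(S_y\setminus(S_{y+1}\cup S_{y+2})\bigr)\cup\bigl(S_{y+2}\setminus(S_y\cup S_{y+1})\bigr)$ has no bound better than $\Theta\bigl(|S_y|+|S_{y+2}|\bigr)$, and these arriving/departing elements can concentrate arbitrarily near a surviving vertex $z$ in the global sort order. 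Restricting a fixed chunking to $V(T_y)$ and then ``rebalancing locally within each chunk'' means the subtree beneath each chunk representative, its choice of representative, and its depth in the next-coarser level can all change when the chunk's occupancy changes --- so a single row transition can disturb $\Omega(\h(T_y))$ ancestors of $z$, not $O(k)$. Your claim that only $O(1)$ chunks per level near $z$ are affected is the theorem, not a consequence of the setup; as written it would give a $\nu_y(z)$ of length up to $\Theta(\h(T_y)\log\h(T_y))$, which is $\Theta(\log n\log\log n)$ rather than the required $O(k\log\h(T_y))=O(\sqrt{\log n\log\log n}\cdot\log\log n)$. The construction in \cite{AdjacencyLabellingPlanarJACM} does not rebalance canonically per row; it lets imbalance accumulate and retains extra (stale) vertices across several rows, which is why (2) has a factor $4n$ rather than the $2n$ your choice of $V(T_y)$ gives, and it is exactly that retained slack that makes the per-row edit count $O(k)$ rather than $\Omega(\h(T_y))$. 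Your choice $V(T_y)=S_y\cup S_{y+1}$ with immediate canonical restriction and rebalance discards the mechanism that the proof relies on. A second, smaller issue: the set you name as entering/leaving involves $S_{y-1}$, but with your definition of $V(T_y)$ the index $y-1$ never appears; this suggests the bookkeeping in your argument has not been pinned down precisely.
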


The sequence $T_1,\dots,T_h$ obtained in the lemma
is called a \emph{bulk tree sequence}
in~\cite{AdjacencyLabellingPlanarJACM}, and plays a fundamental role
in~\cite{AdjacencyLabellingPlanarJACM} and the present paper.

\begin{observation}\label{obs:lambda}
There exists a function $\lambda:\N \to \N$ with $\lambda(n) \in O(\sqrt{\log n \log\log n})$ such that
\begin{itemize}
    \item $\h(T_y)\le \log|V(T_y)| + \lambda(n)$ always holds in property~\eqref{bound_height} of \cref{lem:tree_sequence}, and
    \item $|\nu_y(z)| \leq \lambda(n)$ always holds in property~\eqref{bound_nu} of \cref{lem:tree_sequence}.
\end{itemize}
\end{observation}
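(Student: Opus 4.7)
The proof will just consolidate the asymptotic bounds hidden inside the big-$O$ notation of properties~\eqref{bound_height} and~\eqref{bound_nu} of \cref{lem:tree_sequence}, using the explicit choice $k=\max(5,\lceil\sqrt{\log n/\log\log n}\rceil)$ and the size bound $|V(T_y)|\le 4n$ coming from property~\eqref{bound_sum}. The plan is to show that both error terms are simultaneously $O(\sqrt{\log n\log\log n})$, so one can simply take $\lambda(n)$ to be the maximum of their upper bounds (with the larger of the two hidden constants).

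First I would handle property~\eqref{bound_height}. By property~\eqref{bound_sum} we have $|V(T_y)|\le 4n$, hence $\log|V(T_y)|\le\log n+2$. The two pieces of the error term then satisfy
\[
k \;=\; O\!\left(\sqrt{\log n/\log\log n}\right) \;=\; O\!\left(\sqrt{\log n\log\log n}\right),
\]
and
\[
k^{-1}\log|V(T_y)| \;\le\; \frac{\log n+2}{\lceil\sqrt{\log n/\log\log n}\rceil} \;=\; O\!\left(\sqrt{\log n\log\log n}\right).
\]
Their sum is therefore $O(\sqrt{\log n\log\log n})$, which gives the first bullet point.

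Next I would handle property~\eqref{bound_nu}. Using the bound on $\h(T_y)$ just established together with $|V(T_y)|\le 4n$, we get
\[
\h(T_y)\;\le\;\log|V(T_y)|+O\!\left(\sqrt{\log n\log\log n}\right)\;=\;O(\log n),
\]
so $\log\h(T_y)=O(\log\log n)$. Consequently,
\[
k\log\h(T_y) \;=\; O\!\left(\sqrt{\log n/\log\log n}\cdot\log\log n\right) \;=\; O\!\left(\sqrt{\log n\log\log n}\right),
\]
which is the desired bound for $|\nu_y(z)|$.

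Finally, I would define $\lambda(n)$ as the pointwise maximum of the two upper bounds produced above (absorbing the underlying constants), giving a single function in $O(\sqrt{\log n\log\log n})$ that works for both bullet points. There is no real obstacle here — the content of the observation is just that the same asymptotic expression dominates both error terms once one substitutes the explicit value of $k$ and uses the size bound on the $T_y$; the only care needed is to apply property~\eqref{bound_height} \emph{before} property~\eqref{bound_nu}, so that the bound $\log\h(T_y)=O(\log\log n)$ is available when estimating $|\nu_y(z)|$.
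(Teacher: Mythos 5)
Your proof is correct and follows the same route as the paper: substitute the explicit value of $k$, use $|V(T_y)|\le 4n$ from property~(2) to bound $\log|V(T_y)|$, and conclude that both error terms are $O(\sqrt{\log n\log\log n})$. The paper's proof is terser but invokes exactly these ingredients.
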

\begin{proof}
This follows from the bounds $\h(T_y)\le \log|V(T_y)| + O\left(k+k^{-1} \log|V(T_y)|\right) $  in property~\eqref{bound_height} of \cref{lem:tree_sequence} and $|\nu_y(z)| = O(k\log (\h(T_y)))$ in property~\eqref{bound_nu} of \cref{lem:tree_sequence}, combined with properties \eqref{bound_sum} and \eqref{bound_height} of that lemma.
\end{proof}

It is important to note that the function $L$ of \cref{obs:successor-encoding} and the
function $B$ of \cref{lem:tree_sequence} are \emph{explicit}, in the
sense that
\cite{AdjacencyLabellingPlanarJACM} provides simple deterministic
algorithms for producing the output of the functions (note that this is clear for \cref{obs:successor-encoding} by considering
(1) and (2) in the statement of the observation).

\subsection{Universal graphs for interval graphs}\label{sec:interval}

An {\em interval graph} is a graph $G$ that admits an {\em interval representation}, defined  as a collection $(I_v)_{v\in V(G)}$ of closed intervals of the real line such that, for distinct vertices $v,w\in V(G)$, $vw\in E(G)$ if and only if $I_v \cap I_w \neq \emptyset$.

\begin{lemma}\label{lem:intervalsep}
Let $G$ be an $n$-vertex interval graph with clique number at most $\omega$. Then
$V(G)$ can be partitioned into three sets $X_1,X_2,Z$ such that $|Z|\le
\omega$,
$|X_i|\le  \tfrac12 n$
for $i\in \{1,2\}$, and there are no
edges between $X_1$ and $X_2$.
\end{lemma}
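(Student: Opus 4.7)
The plan is to work directly with an interval representation $(I_v)_{v\in V(G)}$ of $G$, writing $I_v=[a_v,b_v]$. For any real number $p$, I would split $V(G)$ into three pieces according to the position of $I_v$ relative to $p$:
\[
L(p):=\{v:b_v<p\},\quad M(p):=\{v:a_v\le p\le b_v\},\quad R(p):=\{v:a_v>p\}.
\]
Two immediate observations take care of two of the three required properties for free. First, every interval in $M(p)$ contains the point $p$, so these intervals pairwise intersect; hence $M(p)$ induces a clique of $G$ and $|M(p)|\le\omega$. Second, if $u\in L(p)$ and $v\in R(p)$ then $b_u<p<a_v$, so $I_u\cap I_v=\emptyset$ and $uv\notin E(G)$. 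This will handle the ``separator'' role of $Z$ and the non-adjacency between $X_1$ and $X_2$ for any choice of $p$.

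The remaining task is to pick $p$ so that both $|L(p)|$ and $|R(p)|$ are at most $n/2$. For this I would sort the vertices by right endpoint, $b_{v_1}\le\cdots\le b_{v_n}$, and set $p:=b_{v_c}$ with $c:=\lfloor n/2\rfloor+1$. On one hand, only $v_1,\dots,v_{c-1}$ can satisfy $b_v<p$, so $|L(p)|\le c-1=\lfloor n/2\rfloor\le n/2$. On the other hand, any $v\in R(p)$ satisfies $b_v\ge a_v>p$, so $v$ has a right endpoint strictly larger than $p$; by the sorted order this forces $v\in\{v_{c+1},\dots,v_n\}$, giving $|R(p)|\le n-c=\lceil n/2\rceil-1\le n/2$. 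Setting $X_1:=L(p)$, $X_2:=R(p)$, and $Z:=M(p)$ then gives the desired partition.

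There is no real obstacle here; this is essentially the classical ``sweep'' argument for interval graphs. The only small point requiring attention is possible ties among the endpoints, which is dealt with cleanly by the strict inequality in the definition of $R(p)$: vertices with $b_v=p$ automatically land in $M(p)$ (for instance $v_c$ itself, since $a_{v_c}\le b_{v_c}=p$), and this is precisely what keeps the bound $|R(p)|\le n-c$ valid regardless of ties.
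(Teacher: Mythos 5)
Your proof is correct and takes essentially the same approach as the paper's: both sweep a point across the interval representation and use the set of intervals crossing that point as the separator $Z$, with the two sides of the cut as $X_1,X_2$. The only minor difference is that you sort by right endpoints and handle ties by the strict inequality in the definition of $R(p)$, whereas the paper sorts by left endpoints and assumes a representation in which all left endpoints are distinct; both yield the required bounds.
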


\begin{proof}
Consider an interval representation $(I_v)_{v\in V(G)}$ of $G$, where
$I_v=[a_v,b_v]$ for any $v\in V(G)$, and such that at most one
interval $I_v$ starts at each point (it is well known that such a
representation exists). Order the vertices of $G$ as
$v_1,\ldots,v_n$ such that for any $1\le i\le j\le n$, $a_{v_i}\le
a_{v_j}$. For each $1\le i \le n$, let $Z_i$ be the set of vertices
$v$ of $G$ such that $I_v$ contains $a_{v_i}$. Since $G$ has clique
size at most $\omega$, each set $Z_i$ contains at most $\omega$
vertices (and at least one vertex, namely $v_i$). Moreover, the vertex set of each $G-Z_i$ can be partitioned
into two sets $A_i$ (the vertices $v$ such that $b_v<a_{v_i}$) and
$B_i$ (the vertices $v$ such that $a_v>a_{v_i}$) with no edges
between them. Recall that at most one interval starts at
each $a_{v_i}$, so $|B_i|=n-i$ for any $1\le i \le n$. So there is
$1\le i \le n$ such that $n/2-1\le |B_i|\le n/2$. It follows that
$|A_i|\le n/2+1-|Z_i|\le n/2$, as desired.
\end{proof}

\begin{figure}[htb]
 \centering
 \includegraphics[scale=0.8]{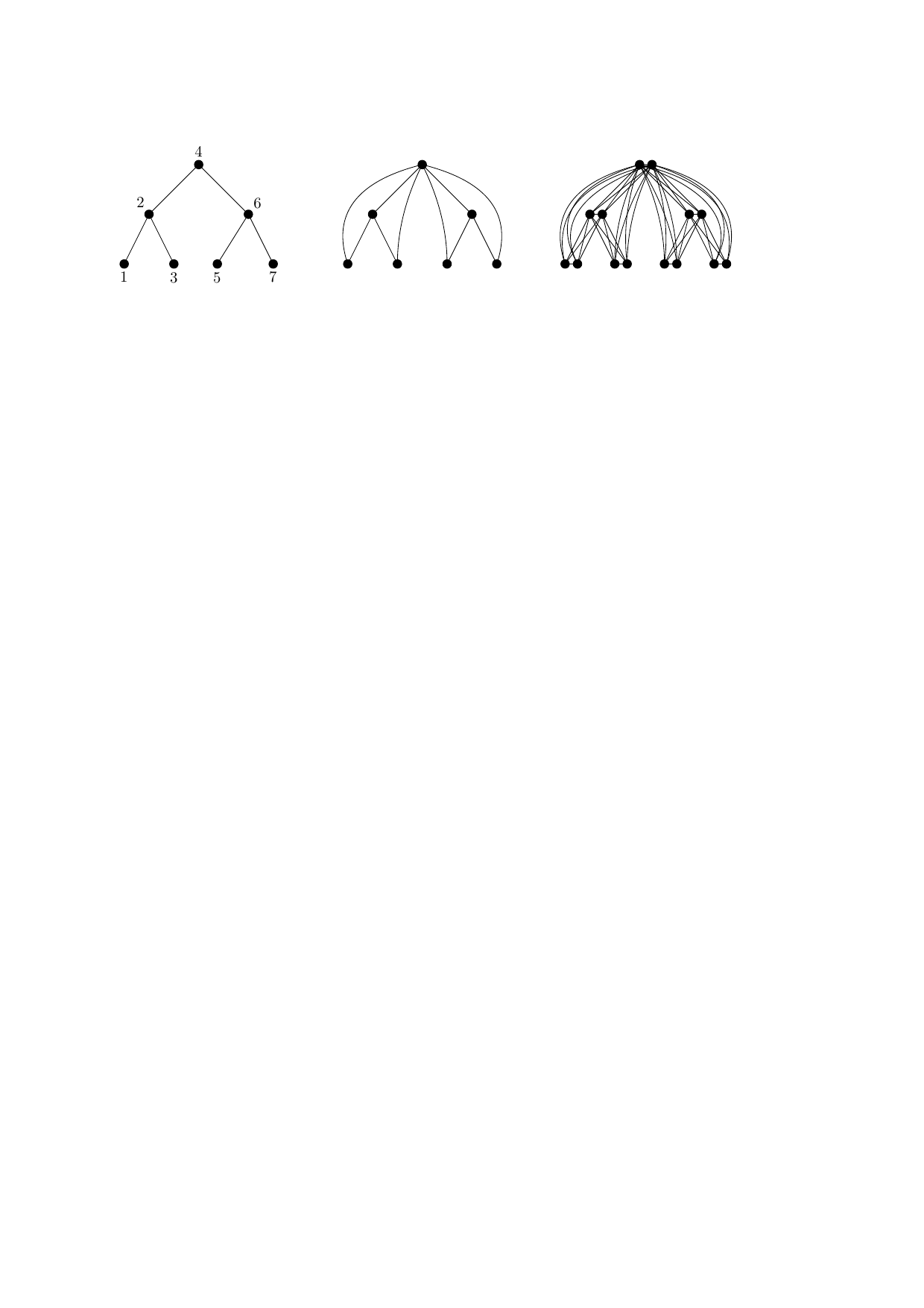}
 \caption{From left to right: $B_2$, $C_2$, and $C_2\boxtimes K_2$.}
 \label{fig:closure}
\end{figure}

For any integer $d\ge 0$, let $B_d$ be the unique binary search tree
with $V(B_d)=\{1,\ldots,2^{d+1}-1\}$ and having height $d$. The
\emph{closure} $C_d$ of $B_d$ is the graph with vertex set
$V(C_d):=V(B_d)$ and edge set $E(C_d):=\{vw:v\prec_{B_d} w\}$ (see
Figure~\ref{fig:closure}). The universal graph for the family of
$n$-vertex planar graphs of Babai et al.~\cite{babai.chung.ea}, with
$O(n^{3/2})$ edges, is precisely $C_{\lceil \log n \rceil} \boxtimes
K_t$, with $t=O(\sqrt{n})$. Using the same idea, we now describe
a universal graph for the family of $n$-vertex interval graphs of bounded clique number.

\begin{lemma}\label{lem:intervaluniversal}
For every positive integers $n\ge 1$ and $\omega\ge 1$, the graph $C_{\lceil \log n \rceil} \boxtimes K_\omega$ is universal for the class of $n$-vertex interval graphs with clique number at most $\omega$.
\end{lemma}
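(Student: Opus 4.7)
The plan is to proceed by induction on $n$, exploiting the recursive structure of $C_d$ together with the separator lemma.  Let $d := \lceil \log n\rceil$.  Our goal is to embed $G$ into $C_d \boxtimes K_\omega$; we identify the vertex set of $C_d$ with $V(B_d)$ so that two vertices $x \neq y$ of $C_d$ are adjacent iff one is a $B_d$-ancestor of the other.  We think of each node $u$ of $B_d$ as carrying a ``slot'' of $\omega$ vertices (its copy of $K_\omega$).  Within one slot the $\omega$ vertices form a clique, and two vertices in distinct slots $u,v$ are adjacent iff $u \preceq_{B_d} v$ or $v \preceq_{B_d} u$.

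The inductive step is as follows.  If $n \leq 1$, any embedding into a single slot works.  Otherwise, apply \cref{lem:intervalsep} to $G$ to obtain a partition $V(G) = X_1 \cup X_2 \cup Z$ with $|Z| \leq \omega$, with $|X_i| \leq n/2$, and with no edges between $X_1$ and $X_2$.  Place the $\leq \omega$ vertices of $Z$ into the $\omega$-slot at the root of $B_d$; since this slot induces a $K_\omega$, all edges of $G[Z]$ are realized.  Now $G[X_1]$ and $G[X_2]$ are induced subgraphs of $G$, so they are interval graphs with clique number at most $\omega$, each of order at most $n/2 \leq 2^{d-1}$.  By the inductive hypothesis, $G[X_1]$ embeds into $C_{d-1} \boxtimes K_\omega$ and similarly for $G[X_2]$; we use these as the embeddings into the left and right subtrees of the root of $B_d$, respectively.

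It remains to verify that all edges of $G$ are covered.  Edges inside $G[Z]$, $G[X_1]$, and $G[X_2]$ are handled by the slot-clique and the two inductive embeddings.  An edge between $Z$ and $X_1 \cup X_2$ is realized because the root of $B_d$ is a $B_d$-ancestor of every node in both subtrees, so any pair (root-slot vertex, subtree-slot vertex) is adjacent in $C_d \boxtimes K_\omega$.  Finally, there are no edges between $X_1$ and $X_2$ in $G$, so the absence of such edges in the embedding (left and right subtrees are $B_d$-incomparable) is not a problem.

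I do not expect any genuine obstacle: the construction is precisely tailored to the separator structure of interval graphs, and the height bound $d = \lceil \log n \rceil$ matches the halving achieved by \cref{lem:intervalsep}.  The only point that requires (very mild) care is the arithmetic at the base of the induction and the bound $\lceil \log(n/2)\rceil \leq \lceil \log n\rceil - 1$ for $n \geq 2$, which ensures that the recursion always fits inside a tree one level shallower.
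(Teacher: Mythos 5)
Your proof is correct and follows essentially the same route as the paper: induction on $n$, invoking \cref{lem:intervalsep} to split $V(G)$ into $X_1,X_2,Z$, placing the separator $Z$ into the $\omega$-slot at the root of $B_d$, and embedding $G[X_1]$ and $G[X_2]$ recursively into the two subtrees via the observation that $\lceil\log(n/2)\rceil = \lceil\log n\rceil-1$. The paper phrases the last step as ``$C_{\lceil\log n\rceil}\boxtimes K_\omega$ is two disjoint copies of $C_{\lceil\log n\rceil-1}\boxtimes K_\omega$ plus $\omega$ universal vertices,'' which is exactly the slot picture you spell out.
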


\begin{proof}
We prove the result by induction on $n$. If $n=1$, then the result
clearly holds, so we can assume that $n\ge 2$. Consider an
$n$-vertex interval graph $G$ with clique number at most $\omega$. By
\cref{lem:intervalsep}, the vertex set of $G$ has a partition into
three sets  $X_1,X_2,Z$ such that $|Z|\le
\omega$,
$|X_i|\le  \tfrac12 n$
for $i\in \{1,2\}$, and there are no
edges between $X_1$ and $X_2$. By the induction hypothesis, $G[X_1]$
is a subgraph of $C_{\lceil \log (n/2) \rceil} \boxtimes K_\omega=C_{\lceil \log n \rceil-1} \boxtimes K_\omega$ and similarly $G[X_2]$ is a subgraph
of  $C_{\lceil \log n \rceil-1} \boxtimes K_\omega$. Note that for $n\ge 2$, $C_{\lceil \log n \rceil} \boxtimes K_\omega$ can be obtained from two disjoint copies of $C_{\lceil \log n \rceil-1} \boxtimes K_\omega$ by adding $\omega$ universal
vertices. Using that $|Z|\le
\omega$, this implies that $G$ is a subgraph of $C_{\lceil \log n \rceil} \boxtimes K_\omega$, as desired.
\end{proof}

Note that the proof of \cref{lem:intervaluniversal} is constructive:
given any interval representation of an $n$-vertex
interval graph $G$ with clique number at most $\omega$, it gives an efficient deterministic  algorithm to
find a copy of $G$ in $C_{\lceil \log n
  \rceil} \boxtimes K_\omega$.

\medskip

For a node $v\in C_d$, define the \emph{interval} $I_{C_d}(v):=\{w\in
V(B_d):v\preceq_{B_d} w\}$. We observe that
any two intervals are either nested or disjoint.

\begin{observation}\label{proper_containment}
    For any two nodes $v,w$ of $C_d$,  $I_{C_d}(v)\supseteq I_{C_d}(w)$, $I_{C_d}(v) \subseteq I_{C_d}(w)$ or $I_{C_d}(v)\cap I_{C_d}(w)=\emptyset$ and, in the first two cases, $vw\in E(C_d)$.
\end{observation}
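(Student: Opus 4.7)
The plan is to unwind the definition: by construction, $I_{C_d}(v)$ is the set of $B_d$-descendants of $v$ (with $v$ itself included, since $v \preceq_{B_d} v$), so the statement reduces to the standard fact that in any rooted tree the family of descendant-sets is laminar, together with a direct check that proper nesting corresponds to an edge of the closure.

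First I would fix two distinct nodes $v,w \in V(C_d) = V(B_d)$ and split on whether one of them is a $B_d$-ancestor of the other. If $v \preceq_{B_d} w$, then transitivity of $\preceq_{B_d}$ gives that every $B_d$-descendant of $w$ is also a $B_d$-descendant of $v$, so $I_{C_d}(w) \subseteq I_{C_d}(v)$; symmetrically for $w \preceq_{B_d} v$. If neither is an ancestor of the other, I would argue that $I_{C_d}(v)\cap I_{C_d}(w)=\emptyset$ by contradiction: a common element $x$ would satisfy $v \preceq_{B_d} x$ and $w \preceq_{B_d} x$, which forces both $v$ and $w$ to lie on the root-to-$x$ path $P_{B_d}(x)$, and any two nodes on such a path are $\preceq_{B_d}$-comparable, contradicting the case assumption.

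For the second half of the statement, suppose $I_{C_d}(v)\supseteq I_{C_d}(w)$. Since $w\in I_{C_d}(w)$, this containment yields $w\in I_{C_d}(v)$, i.e.\ $v \preceq_{B_d} w$, and as $v\neq w$ this is actually $v\prec_{B_d} w$; by the definition of $C_d$ this means $vw \in E(C_d)$. The case $I_{C_d}(v)\subseteq I_{C_d}(w)$ is symmetric.

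The argument involves no real obstacle; the only small subtlety is making sure the trichotomy is exhaustive, which is exactly the tree-ancestor trichotomy invoked above. I expect the entire proof to fit in three or four lines.
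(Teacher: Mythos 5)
Your proof is correct. The paper states this as an \emph{Observation} without giving any proof, so there is no argument to compare against; the one you supply — descendant-sets in a rooted tree form a laminar family via the tree-ancestor trichotomy, and proper containment corresponds to strict $B_d$-ancestry and hence to an edge of the closure $C_d$ — is the natural argument the authors evidently had in mind. Your remark that the edge conclusion requires $v\neq w$ (so that $v\preceq_{B_d} w$ upgrades to $v\prec_{B_d} w$) is a fair reading of the slightly informal phrase ``two nodes.''
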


Let $G$ be an induced subgraph of $C_d$ and let $T$ be a binary search tree with $V(G)\subseteq V(T)\subseteq V(C_d)$.
(Let us remark that, while the node set of $T$ is a subset of that of $B_d$, the structure of $T$ could potentially be very different from that of $B_d$.)
For each $v\in V(G)$, let $x_T(v)$ denote the node $x\in V(T)$ of minimum $T$-depth such that $x\in I_{C_d}(v)$.  Note that, for each $v\in V(G)$,  $x_T(v)$ is well-defined since $v\in V(T)$ and $v\in I_{C_d}(v)$.

For two strings $x$ and $y$ we use $x\preceq y$ to denote that $x$ is a prefix of $y$, and $x\prec y$ to denote that $x\preceq y$ and $|x|<|y|$.  We use $x\comparable y$ to denote
that $x\preceq y$ or $y\preceq x$ (note that the relation
$\comparable$ is reflexive and symmetric but not transitive).

\begin{lemma}\label{consistent_ancestor}
  Let $G$ be an induced subgraph of $C_d$ and let $T$ be a binary search tree with $V(G)\subseteq V(T)\subseteq V(C_d)$.
Let $vw \in E(G)$.  Then $x_T(v) \preceq_T x_T(w)$ or $x_T(w) \preceq_T x_T(v)$, and hence $\sigma_T(x_T(v)) \comparable \sigma_T(x_T(w))$.
\end{lemma}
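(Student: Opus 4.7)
My plan is to reduce the claim to a purely structural statement about the nodes $x_T(\cdot)$ and then exploit the binary search tree property of $T$ through a lowest common ancestor argument. First, I would invoke \cref{proper_containment} on the edge $vw\in E(G)\subseteq E(C_d)$: since $I_{C_d}(v)\cap I_{C_d}(w)\neq\emptyset$, one of the two sets is contained in the other, and by symmetry I may assume $I_{C_d}(w)\subseteq I_{C_d}(v)$. The crucial structural fact I will use is that, because $B_d$ is the balanced binary search tree on $\{1,\dots,2^{d+1}-1\}$, the set $I_{C_d}(u)$ of $B_d$-descendants of any node $u$ is a \emph{contiguous} range of integers in $V(B_d)$ that contains $u$.

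The main step is the following sub-claim: for every $z\in I_{C_d}(v)\cap V(T)$, $x_T(v)\preceq_T z$. Let $u$ denote the lowest common $T$-ancestor of $x_T(v)$ and $z$. If $u=x_T(v)$, we are done. If $u=z\neq x_T(v)$, then $z\prec_T x_T(v)$, so $d_T(z)<d_T(x_T(v))$, contradicting the minimality in the definition of $x_T(v)$ since $z\in I_{C_d}(v)\cap V(T)$. Otherwise $x_T(v)$ and $z$ lie in distinct subtrees of $u$, and the binary search tree property of $T$ places them on opposite sides of $u$ in the real order; since $I_{C_d}(v)$ is a contiguous integer interval containing both, it must also contain $u$, and then $d_T(u)<d_T(x_T(v))$ again contradicts minimality.

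Finally, I would instantiate the sub-claim with $z:=x_T(w)$: since $x_T(w)\in I_{C_d}(w)\cap V(T)\subseteq I_{C_d}(v)\cap V(T)$, the sub-claim yields $x_T(v)\preceq_T x_T(w)$. Signatures encode the sequence of left/right choices on the root-to-node path in $T$, so $T$-ancestry translates directly into a prefix relation on signatures; hence $\sigma_T(x_T(v))\preceq\sigma_T(x_T(w))$, and in particular $\sigma_T(x_T(v))\comparable\sigma_T(x_T(w))$. The only mildly non-obvious ingredient is the contiguity of the $B_d$-descendant sets inside $V(B_d)$; once this is recorded, the rest is a short LCA argument and I do not anticipate a serious obstacle.
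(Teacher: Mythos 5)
Your proof is correct and follows essentially the same route as the paper: reduce via \cref{proper_containment} to a containment of the descendant intervals, and then derive a contradiction from the minimality of $T$-depth in the definition of $x_T(\cdot)$, using the lowest common $T$-ancestor together with the binary search tree property and the contiguity of $I_{C_d}(\cdot)$ in $\{1,\dots,2^{d+1}-1\}$. The only cosmetic difference is that you factor the LCA step out as a standalone sub-claim (that $x_T(v)$ is a $T$-ancestor of every element of $I_{C_d}(v)\cap V(T)$) rather than arguing directly for the two specific nodes, and you make the contiguity fact explicit where the paper invokes it implicitly via ``by definition of $I_{C_d}(w)$.''
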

\begin{proof}
Note that $vw \in E(G)$ implies that $I_{C_d}(v)\subseteq I_{C_d}(w)$ or $I_{C_d}(v)\supseteq I_{C_d}(w)$, say without loss of generality $I_{C_d}(v)\subseteq I_{C_d}(w)$.
Suppose that neither $x_T(v) \preceq_T x_T(w)$ nor $x_T(w) \preceq_T x_T(v)$ holds.
Then there exists a common $T$-ancestor $z \in V(T)$ of $x_T(v)$ and $x_T(w)$ with $z\neq x_T(v), x_T(w)$.
Since $T$ is a binary search tree, it follows that $x_T(v) < z < x_T(w)$ or $x_T(w) < z < x_T(v)$.
Since $x_T(v) \in I_{C_d}(v)\subseteq I_{C_d}(w)$ and $x_T(w) \in I_{C_d}(w)$, we also have $z \in I_{C_d}(w)$, by definition of $I_{C_d}(w)$.
Hence, $z \in I_{C_d}(w)$ and $z$ is a strict $T$-ancestor of $x_T(w)$, which contradicts the choice of $x_T(w)$.
\end{proof}

\subsection{Treewidth and pathwidth}\label{sec:tw}

A \emph{tree-decomposition} of a graph $G$ is a tree $T$ along with a
collection of subsets $(X_t)_{t\in V(T)}$ of vertices of $G$ (called the
\emph{bags} of the decomposition) such that for every edge $uv\in E(G)$,
there is a node $t\in V(T)$ such that $u,v\in X_t$, and for every
vertex $u \in V(G)$, the nodes $t$ of $T$ such that $u\in X_t$ form a
(non-empty) subtree of $T$. The tree-decomposition is called a
\emph{path-decomposition} if the tree $T$ is a path. The \emph{width} of a tree-decomposition is the
maximum size of a bag, minus 1. The \emph{treewidth} of a graph $G$ is
the minimum width of a tree-decomposition of $G$, and the \emph{pathwidth}
of a graph $G$ is the minimum width of a path-decomposition of $G$. Note that the treewidth of a graph $G$ is
at most the pathwidth of $G$. We will use the following partial
converse.

\begin{lemma}[\cite{scheffler:optimal}]\label{lem:twpw}
Every $n$-vertex graph of
treewidth at most $t$ has pathwidth at most $(t+1) \lfloor \log_3 (2n+1)  + 1 \rfloor - 1$.
\end{lemma}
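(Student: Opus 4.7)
The plan is to proceed by induction on $n$, with the trivial bound $\mathrm{pw}(G)\le n-1$ covering small $n$. For the inductive step, start from a tree-decomposition $(T,(X_s)_{s\in V(T)})$ of $G$ of width at most $t$, and look for a node $s_0\in V(T)$ whose bag $X_{s_0}$ acts as a balanced separator of $G$: concretely, each connected component $T_i$ of $T-s_0$ should satisfy $|A_i|\le (n-1)/3$, where $A_i:=\bigcup_{s\in V(T_i)}X_s\setminus X_{s_0}$ collects the vertices appearing in bags of $T_i$ but not in $X_{s_0}$.

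Given such $s_0$, each induced subgraph $G[A_i]$ has at most $(n-1)/3$ vertices and inherits the tree-decomposition $(T_i,(X_s\setminus X_{s_0})_{s\in V(T_i)})$ of width at most $t$. By induction, $G[A_i]$ admits a path-decomposition $P_i$ of width at most
\[
(t+1)\lfloor\log_3(2|A_i|+1)+1\rfloor-1\le (t+1)\lfloor\log_3((2n+1)/3)+1\rfloor-1=(t+1)\lfloor\log_3(2n+1)\rfloor-1.
\]
Concatenating the $P_i$'s into a single sequence and adding $X_{s_0}$ to every bag then yields a valid path-decomposition of $G$: each vertex of $X_{s_0}$ lies in every bag, each vertex of $A_i$ lies in the contiguous block contributed by $P_i$, and all edges are covered since $X_{s_0}$ separates distinct $A_i$'s in $G$. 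The resulting width is at most
\[
(t+1)\lfloor\log_3(2n+1)\rfloor-1+|X_{s_0}|\le(t+1)\bigl(\lfloor\log_3(2n+1)\rfloor+1\bigr)-1=(t+1)\lfloor\log_3(2n+1)+1\rfloor-1,
\]
closing the induction.

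The main obstacle is the first step: producing a bag $X_{s_0}$ with $|A_i|\le (n-1)/3$ for every component. A standard centroid argument on $T$, obtained by assigning each $v\in V(G)$ to a single bag containing it (e.g., to the root of $T_v$ after rooting $T$ arbitrarily) and then taking a centroid of the resulting vertex-weighted tree, only guarantees branches of weight at most $n/2$; this would yield a $\log_2$ factor rather than $\log_3$. Pushing the balance to roughly $n/3$---which is precisely what produces the base-$3$ logarithm in the bound---requires a more delicate argument, for instance combining a near-centroid of $T$ with an edge-centroid so that $s_0$ can be shifted into a heaviest branch, or a preliminary normalization of the tree-decomposition that prevents any single branch from carrying more than a third of the weight. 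With this balanced-separator step in hand, the remainder of the proof is the familiar \emph{separate, recurse, concatenate} pattern.
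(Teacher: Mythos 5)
The paper does not prove this lemma; it cites Scheffler. So I will assess your proof attempt on its own merits, and the verdict is that it has a genuine gap, exactly at the step you flag as the "main obstacle": a bag $X_{s_0}$ with all branch sets $|A_i|\le (n-1)/3$ simply does not always exist, and the workarounds you suggest (shifting to a near-centroid via an edge-centroid, or normalizing the tree-decomposition) cannot repair this. A concrete counterexample: take $t=1$ and $G=P_n$, the path on $n$ vertices, with $n\equiv 0\pmod 3$ (say $n=9$). Any set $S$ of at most $t+1=2$ vertices leaves at most $3$ components whose sizes sum to at least $n-2$, so some component has at least $\lceil (n-2)/3\rceil = n/3$ vertices, strictly exceeding $\lfloor (n-1)/3\rfloor = n/3-1$. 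This holds for every width-$1$ tree-decomposition of $P_n$ and for every choice of separator of size $\le 2$, so no amount of normalization helps; the obstruction is a pigeonhole count, not a structural one, and it recurs for infinitely many $n$, so it cannot be absorbed into a finite base case either.

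The standard route to this bound avoids the single balanced cut altogether. One first shows that if $G$ admits a tree-decomposition $(T,(X_s))$ of width $t$ and the \emph{tree} $T$ itself has pathwidth $p$, then $G$ has pathwidth at most $(t+1)(p+1)-1$: take a path-decomposition $(Y_1,\dots,Y_m)$ of $T$ and set $Z_i:=\bigcup_{s\in Y_i}X_s$; the $Z_i$ form a path-decomposition of $G$ with bags of size at most $(t+1)(p+1)$. One then bounds the pathwidth of a tree with at most $n$ nodes by roughly $\log_3 n$; this second step is where the base-$3$ logarithm enters, and it uses a separate recursion on trees (e.g., via a path through the centroid whose removal leaves only small subtrees), not a $1/3$-balanced single-vertex cut, which is false even for paths. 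Your top-level recursion-plus-concatenation template is a reasonable instinct and the arithmetic around the floor function is handled correctly, but the separator existence claim it rests on is false, so the proof as written does not go through.
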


Observe that an equivalent definition of pathwidth, which will be used in the proofs, is the following: A graph $G$ has pathwidth at most $k$ if and only if $G$ is a spanning\footnote{A subgraph $G$ of a graph $H$ is {\em spanning} if $V(G)=V(H)$.} subgraph of an interval graph with clique number at most $k+1$.

\section{Universal graphs}\label{sec:universal}

In this section we establish the following technical theorem.

\begin{theorem}\label{thm:main_technical}
For every positive integer $n$, the family of $n$-vertex
induced subgraphs of $C_{\lceil \log n \rceil}\boxtimes P_n$ has a
universal graph $G_n$ with
\[|V(G_n)|\le n\cdot 2^{O(\sqrt{\log n \cdot \log \log n})} \text{ and
  }|E(G_n)|\le n\cdot 2^{O(\sqrt{\log n \cdot \log \log n})}.
  \]
\end{theorem}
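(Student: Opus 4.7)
The plan is to apply the bulk tree sequence machinery of \cref{lem:tree_sequence} in order to turn the embedding problem into a labelling problem whose labels fit in $O(\log n+\lambda(n))$ bits, so that the label universe has size $n\cdot 2^{O(\lambda)}$. Given an $n$-vertex induced subgraph $H$ of $C_d\boxtimes P_n$ with $d=\lceil\log n\rceil$, I let $S_y\subseteq V(C_d)$ be the set of $x$ with $(x,y)\in V(H)$, so $\sum_y|S_y|=n$; by \cref{obs:stproduct} applied to $H$ I may assume the number of non-empty rows is at most $n$. I then feed the sequence $S_1,\dots,S_n$ into \cref{lem:tree_sequence} to obtain binary search trees $T_1,\dots,T_n$ with $V(T_y)\supseteq S_y\cup S_{y+1}$, total size $\sum_y|V(T_y)|\le 4n$, heights at most $\log|V(T_y)|+\lambda(n)$, and short updates $\nu_y(z)\in\{0,1\}^{\le\lambda}$ realising $\sigma_{T_{y+1}}(z)=B(\sigma_{T_y}(z),\nu_y(z))$ for each $z\in V(T_y)\cap V(T_{y+1})$.

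Next, I will define $G_n$ as a graph whose vertex set is a carefully chosen label universe of size $n\cdot 2^{O(\lambda)}$, where each label encodes (a short surrogate for) a row index $y$ together with a signature $\sigma_{T_y}(x)$ of length at most $h_{\max}=\log n+O(\lambda)$. The edges of $G_n$ are declared between pairs of labels that could correspond to an edge of some subgraph of $C_d\boxtimes P_n$. By \cref{consistent_ancestor} and \cref{proper_containment}, every within-row edge $(x_1,y)(x_2,y)\in E(H)$ yields labels whose signatures satisfy $\sigma_{T_y}(x_{T_y}(x_1))\comparable \sigma_{T_y}(x_{T_y}(x_2))$, so I put an edge in $G_n$ between every pair of same-row labels whose signatures are prefix-comparable; by property~\eqref{bound_nu} of \cref{lem:tree_sequence}, every between-row edge corresponds to signatures at consecutive rows related through the update function $B$ with some $\nu$ of length at most $\lambda$, possibly composed with a prefix operation, and I put the corresponding edges in $G_n$. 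The embedding of $H$ then sends $v=(x,y)$ to its label; injectivity holds since the signatures inside a single $T_y$ are distinct and the row component disambiguates different rows, and edge-preservation follows from the case analysis above.

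The main technical obstacle is keeping the label universe down to $n\cdot 2^{O(\lambda)}$ instead of the naive $n^{2+o(1)}$ that results from concatenating a row index from $[n]$ ($\log n$ bits) with a signature of length up to $\log n+O(\lambda)$. This is precisely where the full force of the bulk tree sequence is needed: the short update strings $\nu_y(z)$ of length $O(\lambda)$ allow labels to be shared across consecutive rows rather than storing each $(y,\sigma)$ pair independently, effectively trading the $\log n$ bits needed to name a row for an incremental $\lambda$-bit update. Once the label set has the correct size, the edge count is immediate: each label is adjacent to at most $h_{\max}\cdot 2^{\lambda+1}=2^{O(\lambda)}$ others, coming from the at most $h_{\max}$ ancestors or descendants realising prefix-comparability and the at most $2^{\lambda+1}$ choices of $\nu$ governing cross-row transitions, so $|E(G_n)|\le |V(G_n)|\cdot 2^{O(\lambda)}=n\cdot 2^{O(\lambda)}$. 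Since both $B$ (from \cref{lem:tree_sequence}) and the bulk tree sequence itself are produced by explicit deterministic algorithms, the resulting embedding procedure will also be efficient and deterministic, as required for the applications to \cref{thm:main,thm:planar}.
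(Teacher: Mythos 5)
Your outline correctly identifies the two structural ingredients the paper uses — the bulk tree sequence of \cref{lem:tree_sequence} and the prefix-comparability argument of \cref{consistent_ancestor} — but it has two genuine gaps, one of which is exactly the crux of the proof.

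First, you never actually produce a mechanism that brings the label universe down to $n^{1+o(1)}$. You correctly note that a naive label (row index in $[n]$, i.e.\ $\log n$ bits) concatenated with a signature of length up to $\log n+O(\lambda)$ gives a universe of size roughly $n^2$, and you say that the $\nu$-updates ``allow labels to be shared across consecutive rows,'' but no sharing of labels happens in the paper and you give no construction that realises this. The paper's actual fix is a different and concrete trick: it applies \cref{lem:biased-bst} to build a \emph{biased} binary search tree $T$ over the row indices $\{1,\dots,h\}$, with weight $w(i)=|V(T_i)|$, and encodes row $i$ by $\sigma_T(i)$ of length $\le\log(4n)-\log|V(T_i)|$. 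Meanwhile the within-row signature $\sigma_{T_i}(x_{T_i}(v))$ has length $\le\h(T_i)\le\log|V(T_i)|+\lambda$. The two contributions trade off against each other and the \emph{sum} of lengths is $\le\log n+\lambda+2$, which is what makes the vertex count $n\cdot 2^{O(\lambda)}$. The $\nu$-updates are used only to \emph{define edges} of $G_n$ between labels of consecutive rows; they play no role in compressing the vertex set. Without the biased-BST idea, or some equivalent, your label universe has size $n^{2+o(1)}$.

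Second, your injectivity claim is wrong as stated. You write the label as ``(row surrogate, $\sigma_{T_y}(x)$),'' but the comparability property you invoke from \cref{consistent_ancestor} concerns $\sigma_{T_y}(x_{T_y}(x))$, not $\sigma_{T_y}(x)$. If the label stores $\sigma_{T_y}(x_{T_y}(v))$, then two distinct vertices $(v,y)$ and $(w,y)$ of $H$ can collide, because $x_{T_y}$ is many-to-one: many $v\in S_y$ can map to the same minimum-depth node of $T_y$ lying in their interval. If instead the label stores $\sigma_{T_y}(v)$ directly, then injectivity holds but the prefix-comparability needed for edge preservation (and hence the ``each label has $2^{O(\lambda)}$ neighbours'' bound) fails, because $T_y$-ancestry of $v,w$ has nothing a priori to do with $C_d$-adjacency. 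The paper resolves this with a third coordinate $z=\varphi(v)$ for a proper colouring $\varphi$ of $C_d$ with $d+1$ colours: by \cref{proper_containment}, if $v\ne w$ and $x_{T_y}(v)=x_{T_y}(w)$, then $vw\in E(C_d)$, so $\varphi(v)\ne\varphi(w)$ and the triple $(\sigma_{T_y}(x_{T_y}(v)),\sigma_T(y),\varphi(v))$ is injective. This coordinate (and the accompanying observation) is missing from your proposal.
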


Before proving \cref{thm:main_technical}, let us explain why it
implies our main theorem, \cref{thm:main}. The proof proceeds in two
steps: we first show that for $\omega\approx t \log n$, $G_n \boxtimes
K_\omega$ is a universal graph for the $n$-vertex graphs of
$\Q_t$. This graph has the desired number of edges, but a fairly large
number of vertices. The second step of the proof consists in reducing
the number of vertices to $(1+o(1))n$.

  We say that a subset $X$ of vertices of a graph $G$ is
  \emph{saturated} by a matching $M$ of $G$ if every vertex of $X$
  is contained in some edge of $M$. We
will need the following result proved (in a slightly different form) in \cite{ACKRRS}. As the proof there is only alluded to, we give the complete details
in the appendix.


\begin{lemma}\label{lem:alon}
For any sufficiently large integer $n$, any integer $k\ge 1$, any real
number $0<\epsilon\le 1$, and any integer $N_0\ge k(1+\epsilon) n$,
there is a bipartite graph $H$ with
bipartition $(V,U)$ such that the following holds.
\begin{itemize}
\item $|V|=N$, and $|U|=N/k$, where $N$ is
  divisible by $k$ and $N_0\le N\le N_0+k$,
  \item each vertex of $V$
    has degree $O(\tfrac1\epsilon \log k)$, and
    \item each $n$-vertex subset of $V$ is
      saturated by a matching of $H$.
    \end{itemize}
  \end{lemma}

We are now ready to prove \cref{thm:main}.

\begin{proof}[Proof of \cref{thm:main} assuming
  \cref{thm:main_technical}]
Let $G_n$ be the universal graph for the family of $n$-vertex
subgraphs of $C_{\lceil \log n \rceil}\boxtimes P_n$ given by
\cref{thm:main_technical}.
Let $t$ be an integer and let $\omega=(t+1) \lfloor \log_3 (2n+1)  + 1 \rfloor$. By \cref{obs:cliqueproduct}, $G_n'=G_n\boxtimes
K_\omega$ is universal for the class of $n$-vertex subgraphs of $C_{\lceil \log n \rceil} \boxtimes P_n \boxtimes K_\omega$. Note that $G_n'$ has precisely $\omega |V(G_n)|=\omega \cdot n\cdot 2^{O(\sqrt{\log
    n \cdot \log \log n})} $ vertices and
\[|E(G_n)|\cdot\omega^2+|V(G_n)|\cdot{\omega \choose
  2}\le  \omega^2\cdot n\cdot 2^{O(\sqrt{\log n \cdot \log \log
    n})}\le t^2\cdot n\cdot 2^{O(\sqrt{\log n \cdot \log \log
    n})}
\]
edges. We will see shortly how to reduce the number of vertices from
$\omega n\cdot 2^{O(\sqrt{\log n \cdot \log \log n})}$ to $(1+o(1))n$, but
for now we prove that $G_n'$ is universal for the $n$-vertex graphs of
$\Q_t$. For this it suffices to show that any $n$-vertex graph $G\in
\Q_t$ is a subgraph of $C_{\lceil \log n \rceil} \boxtimes P_n
\boxtimes K_\omega$.

\smallskip

We consider an $n$-vertex graph $G\in \Q_t$. By the definition of
$\Q_t$ and \cref{obs:stproduct}, there exists a graph $H$ with
treewidth at most $t$ and at most $n$ vertices such that $G$ is a
subgraph of $H\boxtimes P_n$.
By \cref{lem:twpw}, $H$ has pathwidth at most $(t+1) \lfloor \log_3 (2n+1)  + 1 \rfloor - 1 = \omega - 1$.
Hence, there exists an interval graph $I$ with clique number
at most $\omega$ containing $H$ as a spanning subgraph. In
particular, $I$ has at most $n$ vertices. By
\cref{lem:intervaluniversal}, $I$ (and thus $H$) is a subgraph of
$C_{\lceil \log n \rceil} \boxtimes K_\omega$. It follows that $G$ is
a subgraph of $C_{\lceil \log n \rceil} \boxtimes P_n \boxtimes
K_\omega$. This proves that $G_n'$ is indeed universal for
the class of $n$-vertex graphs of $\Q_t$, as desired.

\medskip

The final step consists in reducing the number of vertices in our
universal graph from $n^{1+o(1)}$ to $(1+o(1))n$.
We consider our universal graph $G'_n=G_n \boxtimes K_\omega$ for the family of
$n$-vertex planar graphs, with $N_0=n^{1+o(1)}$ vertices and
$n^{1+o(1)}$ edges. Take $\epsilon=\log^{-1} n$, and let
$k=N_0/(1+\epsilon)n=n^{o(1)}$. By \cref{lem:alon} there exist
$d=O(\tfrac1{\epsilon}\log n)=O(\log^2 n)$ and a bipartite graph $H$ with partite sets
$V\supseteq V(G_n')$ and $U$, with $|V|=N\le N_0+k$ and $|U|=N/k\le
(1+\epsilon)n+1$,
such that the vertices of $V$ have degree at most $d$ in $H$ and every
$n$-vertex subset of $V$ is saturated by a matching in $H$.

We define a graph $H_n$ from $G_n'$ and $H$ as follows: the
vertex set of $H_n$ is $U\subseteq V(H)$, and two vertices  $u,u'$ are
adjacent in $H_n$ if there are $v,v'\in V=V(G_n')$ such that
$vv'\in E(G_n')$, $vu\in E(H)$, and  $v'u'\in E(H)$. Note that $H_n$ has
at most $d^2|E(G_n')|=O(\log^4 n)\cdot n^{1+o(1)} =n^{1+o(1)}$ edges and $|U|\le
(1+\epsilon)n+1=n+O(n/\log n)=(1+o(1))n$ vertices.

It remains to prove that $H_n$ contains all $n$-vertex graphs of
$\Q_t$ as
subgraphs. Take an $n$-vertex graph $F\in \Q_t$. Then $F$ is a subgraph
of $G_n'$, so there is a set $X$ of $n$ vertices of $G_n'$ such that $F$
is a subgraph of $G_n'[X]$. By \cref{lem:alon},
there is a matching between $X$ and $N_H(X)$ in $H$ that saturates
$X$. The intersection of this matching with $U$ consists of a set $Y$
of $n$
vertices, and it follows from the definition of $H_n$ that $F$ is a
subgraph of $H_n[Y]$, as desired.
\end{proof}

In the remainder of \cref{sec:universal}, we prove \cref{thm:main_technical}.

\subsection{Definition of the universal graphs}
\label{sec:universal_graph}

Let $n$ be a positive integer.
We define a graph $G_n$ that will be universal for $n$-vertex subgraphs of $C_{\lceil \log n \rceil}\boxtimes P_n$.
For convenience, let $d:= \lceil \log n \rceil$. Let $k:=\max(5,\lceil\sqrt{\log n / \log\log n}\rceil)$, as in \cref{lem:tree_sequence}.
With a slight abuse of notation, let $\lambda:=\lambda(n)$, where
$\lambda(n)$ is the function from \cref{obs:lambda}.

\smallskip

The vertices of the graph $G_{n}$ are all the triples $(x,y,z)$ where
$x,y\in\{0,1\}^*$ are bitstrings such that $|x|+|y|\le d+\lambda+2$
and $z$ is an integer with $z\in\{0,\ldots,d\}$.
When defining the edge set of $G_{n}$, it will be convenient to orient the edges to simplify the discussions later on, the graph $G_n$ itself is of course undirected.
Given two distinct vertices $(x_1,y_1,z_1), (x_2,y_2,z_2)$, we put a directed edge from  $(x_1,y_1,z_1)$ to $(x_2,y_2,z_2)$ if one of the following conditions is satisfied:

\begin{enumerate}[(1)]
\item \label{E1} $y_1=y_2$ and $x_2 \preceq x_1$,
\item \label{E2} $y_1\neq y_2$, and
  \begin{enumerate}[(a)]
    \item \label{y2L} $y_2 \in L(y_1,d+2)$, where $L$ is defined in~\cref{obs:successor-encoding}, and
    \item \label{x2prime} there exists $x_2' \in \{0,1\}^{*}$ with $|x_2'| \leq d + \lambda +2 -|y_1|$ such that $x_1 \comparable  x_2'$, and
    \item \label{nu} there exists $\nu\in \{0,1\}^{*}$ with $|\nu| \leq \lambda$ such that $B(x_2',\nu)=x_2$, where $B$ is the function from \cref{lem:tree_sequence}.
    \end{enumerate}
\end{enumerate}

Observe that the third coordinate of the triples is not used when defining adjacencies in $G_n$.
It will be used when proving the universality of $G_n$. Note also that
the definition of our universal graph is explicit, as the functions
$L$ and $B$ are explicit themselves (see the discussion at the end of Section~\ref{sec:bst}).

\smallskip

We start by bounding the number of vertices and edges in $G_n$.

\begin{lemma}
\label{lem:Gn_bound}
The following bounds hold:
\begin{itemize}
    \item $|V(G_{n})| \le 2^{d+\lambda+3}\cdot (d+\lambda+3)^2 \le n\cdot 2^{O(\sqrt{\log n \cdot \log \log n})}$, and
    \item $|E(G_{n})|\le 2^{d+2\lambda +4}\cdot (d+\lambda+3)^6 \le n\cdot 2^{O(\sqrt{\log n \cdot \log \log n})}$.
\end{itemize}
\end{lemma}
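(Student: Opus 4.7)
The plan is to prove each of the two displayed bounds by direct counting, treating vertices and then edges of each type separately, and then to convert the combinatorial bound into the form $n \cdot 2^{O(\sqrt{\log n \log \log n})}$ using the bounds $d \le \log n + 1$, $\lambda = O(\sqrt{\log n \log\log n})$ (from \cref{obs:lambda}), and noting that any fixed power of $d+\lambda+3$ is $2^{O(\log\log n)}$ and hence is absorbed by the $2^{O(\sqrt{\log n \log\log n})}$ factor.

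For the vertex bound, I would observe that the number of pairs $(x,y) \in (\{0,1\}^*)^2$ with $|x|+|y| = \ell$ equals $(\ell+1)2^\ell$, so setting $M := d+\lambda+2$ the number of pairs with $|x|+|y| \le M$ is at most $(M+1)^2 \cdot 2^{M+1}$. Multiplying by $d+1$ for the choice of $z$ yields $|V(G_n)| \le (d+\lambda+3)^2 \cdot 2^{d+\lambda+3}$, as claimed.

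For the edge bound, I would fix a vertex $(x_1,y_1,z_1)$ and bound the number of out-neighbours under each of the two rules. Rule \eqref{E1} contributes at most $(|x_1|+1)(d+1) \le (M+1)(d+1)$ out-neighbours, since $y_2$ is determined, $x_2$ must be a prefix of $x_1$, and $z_2$ is free. For rule \eqref{E2}, there are at most $d+3$ choices of $y_2$ (by \cref{obs:successor-encoding}) and $d+1$ choices of $z_2$, so the main work is to bound the number of possible values of $x_2$. Each such $x_2$ has the form $B(x_2',\nu)$ with $|\nu| \le \lambda$, which gives at most $2^{\lambda+1}$ choices of $\nu$, and with $x_2'$ satisfying $x_1 \comparable x_2'$ and $|x_2'| \le M-|y_1|$. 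Splitting on whether $x_2'$ is a prefix of $x_1$ or a proper extension of $x_1$, the number of admissible $x_2'$ is at most $(|x_1|+1) + \sum_{\ell=|x_1|+1}^{M-|y_1|} 2^{\ell-|x_1|} \le (M+1) + 2^{M+1-|y_1|-|x_1|}$.

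Summing the rule \eqref{E2} contribution over all $(d+1)\cdot 2^{a+b}$ vertices with $|x_1|=a$, $|y_1|=b$, and $a+b \le M$, the prefix term gives a contribution of order $(d+\lambda+3)^5 \cdot 2^{d+2\lambda+4}$ using $\sum_{a+b\le M} 2^{a+b} \le (M+1)2^{M+1}$, while the extension term collapses because $2^{a+b}\cdot 2^{M+1-a-b} = 2^{M+1}$, so its sum is $2^{M+1}$ times the number $\binom{M+2}{2}\le (M+1)^2$ of pairs $(a,b)$, giving again a contribution of order $(d+\lambda+3)^5 \cdot 2^{d+2\lambda+4}$. Adding the (smaller) rule \eqref{E1} contribution, the total is at most $(d+\lambda+3)^6 \cdot 2^{d+2\lambda+5}$, as claimed. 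The main technical obstacle is the extension term in the second step: a priori the factor $2^{M+1-|y_1|-|x_1|}$ is exponentially large, and the key point that makes everything work out is that this factor is exactly what is needed to cancel the $2^{|x_1|+|y_1|}$ counting the vertices at that length, so that the double sum becomes polynomial in $M$ rather than exponential. Converting the two bounds to the final form is then immediate from $2^{d} \le 2n$, $2^{O(\lambda)} = 2^{O(\sqrt{\log n \log\log n})}$, and $(d+\lambda+3)^{O(1)} = 2^{O(\log\log n)}$.
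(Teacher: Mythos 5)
Your proof is correct and takes essentially the same approach as the paper: count vertices by summing $(\ell+1)2^\ell$ over lengths $\ell\le M$, then count edges of each type by bounding out-degrees and summing, with the key cancellation $2^{a+b}\cdot 2^{M+1-a-b}=2^{M+1}$ controlling the Type~\eqref{E2} extension term exactly as the paper does (the paper merely merges the prefix and extension counts into a single factor $(d+\lambda+3)2^{M-r}$ before summing, whereas you sum the two terms separately). One small slip: $\sum_{\ell=0}^M(\ell+1)2^\ell\le(M+1)2^{M+1}$, not $(M+1)^2\cdot2^{M+1}$ --- it is the tighter bound that, after multiplying by the $(d+1)$ choices of $z$, gives the stated $|V(G_n)|\le(d+\lambda+3)^2\cdot 2^{d+\lambda+3}$, though this has no effect on the asymptotic conclusion.
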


\begin{proof}
For each $0\le r \le d+\lambda+2$, there are $(r+1)2^r$ pairs $(x,y)$ with
$x,y\in\{0,1\}^*$ such that $|x|+|y|=r$. It follows that for each
$z\in \{0,\ldots,d\}$, $G_n$ contains at most
\[
\sum_{r=0}^{d+\lambda+2} (r+1)\cdot 2^r \le (d+\lambda+3)2^{d+\lambda+3}
  \]
  vertices of the form $(x,y,z)$.
It follows that  $|V(G_{n})| \le 2^{d+\lambda+3}\cdot (d+\lambda+3)(d+1) \le 2^{d+\lambda+3}\cdot (d+\lambda+3)^2$.

    In order to bound $|E(G_{n})|$, we will bound the number of {\em
      outgoing} edges from a given vertex $(x_1,y_1,z_1)$ of $G_n$.

    The number of choices for $(x_2,z_2)$ that result in an edge of Type \eqref{E1} is at most $(|x_1|+1)\cdot(d+1) \le (d+\lambda+3)(d+1)$. It follows that the number of edges of Type \eqref{E1} is at most
    \[
        |V(G_n)|\cdot (d+\lambda+3)(d+1)\le 2^{d+\lambda+3}\cdot (d+\lambda+3)^4
    \]

  To count outgoing edges of Type \eqref{E2} we again fix $(x_1,y_1,z_1)$ with
  $|x_1|+|y_1|=r$. By~\cref{obs:successor-encoding}, the number of
  choices for  $y_2 \in L(y_1,d+2)$ is at most $d+3$.  The number
  of choices for $x_2'$ is at most
  \[ |x_1|+1+2^{d+\lambda+2-|y_1|-|x_1|}=|x_1|+1+2^{d+\lambda+2-r}\le
    d+\lambda+3+2^{d+\lambda+2-r}\le (d+\lambda+3)2^{d+\lambda+2-r}.
  \]
  The number of choices of $\nu$ is at most $2^{\lambda+1}$.  The choices of $x_2'$ and $\nu$ determine $x_2$.  The number of choices for $z_2$ is $d+1$.  As before, the number of vertices $(x_1,y_1,z_1)$ with $|x_1|+|y_1|=r$ is $(r+1)\cdot 2^r\cdot (d+1)$.  Therefore, the total number of edges of Type \eqref{E2} is at most
    \[
        \sum_{r=0}^{d+\lambda+2} (r+1)\cdot 2^r\cdot (d+1)
            \cdot 2^{\lambda +1}\cdot (d+3)(d+\lambda+3) \cdot 2^{d+\lambda+2-r}\cdot (d+1) \le 2^{d+2\lambda+3}\cdot (d+\lambda+3)^6 \enspace .
    \]
We obtain that the total number of edges in $G_n$ is at most $2^{d+2\lambda +4}\cdot (d+\lambda+3)^6$.
\end{proof}

\subsection{Proof of universality}

\begin{lemma}
\label{lem:Gn_universal}
   The graph $G_{n}$ is universal for the class of $n$-vertex subgraphs of $C_{\lceil \log n \rceil}\boxtimes P_n$.
\end{lemma}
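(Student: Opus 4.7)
The plan is to construct, for any $n$-vertex subgraph $H$ of $C_d \boxtimes P_n$ (writing $d:=\lceil \log n \rceil$), an injection $\phi\colon V(H)\to V(G_n)$ that sends each edge of $H$ to an edge of $G_n$. For each row $r\in\{1,\dots,n\}$ set $S_r:=\{c\in V(C_d):(c,r)\in V(H)\}$, so $\sum_r|S_r|=n$. I apply \cref{lem:tree_sequence} to $(S_r)_r$ to obtain a bulk tree sequence $T_1,\dots,T_n$ satisfying \eqref{pretend_appear}--\eqref{bound_nu}. With $w_r:=|V(T_r)|$ and $W:=\sum_r w_r\leq 4n$, \cref{lem:biased-bst} then yields a binary search tree $R$ on $\{1,\dots,n\}$ with $d_R(r)\leq\log(W/w_r)$ for every $r$; in particular $\h(R)\leq\log W\leq d+2$.

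The proposed embedding is
\[
\phi(c,r) := \bigl(\sigma_{T_r}(x_{T_r}(c)),\ \sigma_R(r),\ d_{B_d}(c)\bigr).
\]
Its image lies in $V(G_n)$: by \cref{obs:lambda} we have $\h(T_r)+d_R(r)\leq \log w_r+\lambda+\log(W/w_r)\leq d+\lambda+2$. Injectivity is easy too: $\sigma_R$ separates rows, and within a row the equality $x_{T_r}(c_1)=x_{T_r}(c_2)$ places $c_1$ and $c_2$ on a common root-to-leaf path of $B_d$, so having equal $B_d$-depth forces $c_1=c_2$.

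For edge preservation, consider an edge $\{(c_1,r_1),(c_2,r_2)\}$ of $H$. If $r_1=r_2=:r$, then $c_1c_2\in E(C_d)$ and \cref{consistent_ancestor} applied to $T_r$ shows that $\sigma_{T_r}(x_{T_r}(c_1))$ and $\sigma_{T_r}(x_{T_r}(c_2))$ are prefix-comparable, placing the edge under condition~\eqref{E1}. Otherwise $r_2=r_1\pm 1$; by symmetry I assume $r_2=r_1+1$ and orient the edge so that $c_1\preceq_{B_d} c_2$ (allowing $c_1=c_2$). Both $c_1$ and $c_2$ lie in $V(T_{r_1})$ by~\eqref{pretend_appear}. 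I aim to place the edge under condition~\eqref{E2}: subcondition~\eqref{y2L} is \cref{obs:successor-encoding} since $r_1,r_2$ are consecutive in $V(R)$ and $\h(R)\leq d+2$, while for~\eqref{x2prime} and~\eqref{nu} I choose $u:=x_{T_{r_2}}(c_2)$ and set $x_2':=\sigma_{T_{r_1}}(u)$ and $\nu:=\nu_{r_1}(u)$, which makes~\eqref{bound_nu} yield $B(x_2',\nu)=\sigma_{T_{r_2}}(u)=x_2$; the length bound $|x_2'|+|y_1|\leq d+\lambda+2$ follows from the same arithmetic used to verify that $\phi(c,r)\in V(G_n)$.

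The main obstacle is to justify this last choice: I need both $u\in V(T_{r_1})$ (so that~\eqref{bound_nu} even applies) and $x_1\comparable\sigma_{T_{r_1}}(u)$. The comparability reduces to a BST-interval argument in the spirit of \cref{consistent_ancestor}: $x_{T_{r_1}}(c_1)$ and $u=x_{T_{r_2}}(c_2)$ both lie in the standard-order interval $I_{C_d}(c_1)\supseteq I_{C_d}(c_2)$, so their $T_{r_1}$-least common ancestor lies there too, and the minimality of $d_{T_{r_1}}(x_{T_{r_1}}(c_1))$ over $I_{C_d}(c_1)$ forces the common ancestor to equal $x_{T_{r_1}}(c_1)$, yielding $x_1\preceq\sigma_{T_{r_1}}(u)$. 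Establishing $u\in V(T_{r_1})$, on the other hand, is not a direct consequence of the four stated properties of the bulk tree sequence and will require either a refinement of those properties or a look into the concrete construction of $T_y$ from \cite{AdjacencyLabellingPlanarJACM}, which guarantees that $V(T_{r_1})$ absorbs the $x_{T_{r_2}}$-images arising from $S_{r_2}$.
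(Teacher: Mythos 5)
Your setup, the embedding $\phi$, the verification that the image lands in $V(G_n)$, the injectivity argument, and Case $r_1=r_2$ all match the paper exactly. The divergence — and the gap — is in your handling of the transition code.

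For the row-change case you set $u:=x_{T_{r_2}}(c_2)$ and take $x_2':=\sigma_{T_{r_1}}(u)$, so you are applying the selection function in $T_{r_2}$ and then asking for its signature in $T_{r_1}$. As you yourself note, this requires $u\in V(T_{r_1})$, which the four stated properties of the bulk tree sequence do not give; there is no reason the minimum-depth representative of $I_{C_d}(c_2)$ chosen inside $T_{r_2}$ should appear as a node of $T_{r_1}$. The paper sidesteps this entirely by choosing $x_2':=\sigma_{T_{r_1}}\bigl(x_{T_{r_1}}(c_2)\bigr)$, i.e., the selection is performed in $T_{r_1}$, not in $T_{r_2}$. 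This is always well-defined: since $(c_2,r_1+1)\in V(H)$ gives $c_2\in S_{r_1+1}\subseteq V(T_{r_1})$ by property~\eqref{pretend_appear}. With that choice, comparability $x_1\comparable x_2'$ is a one-line application of \cref{consistent_ancestor} to the pair $c_1,c_2\in V(T_{r_1})$ (no directional assumption about which of $c_1,c_2$ is the $B_d$-ancestor is needed), and the transition code in property~\eqref{bound_nu} is applied to the common vertex $c_2\in V(T_{r_1})\cap V(T_{r_2})$ to yield $B(x_2',\nu)=\sigma_{T_{r_2}}(x_{T_{r_2}}(c_2))=x_2$. So no refinement of the bulk-tree-sequence properties or inspection of its construction is needed — only a different choice of $x_2'$.

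A second, smaller issue: you declare ``orient the edge so that $c_1\preceq_{B_d} c_2$'' after already fixing $r_2=r_1+1$. These two normalizations cannot be imposed simultaneously: the lower-row endpoint may well be the $B_d$-descendant. Your interval-containment argument then assumes $I_{C_d}(c_1)\supseteq I_{C_d}(c_2)$, which need not hold, so even if the well-definedness of $u$ were fixed, the comparability step as written covers only one of the two orientations. The paper's use of \cref{consistent_ancestor} inside the single tree $T_{r_1}$ handles both orientations symmetrically and avoids this problem.
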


\begin{proof}
    Let $d:= \lceil \log n \rceil$, $k:=\max(5,\lceil\sqrt{\log n / \log\log n}\rceil)$, and $\lambda:=\lambda(n)$.
    Let $G$ be an $n$-vertex subgraph of $C_d\boxtimes P_n$.
    By \cref{obs:stproduct}, we may assume that $G$ is a subgraph of $C_d\boxtimes P_h$ for some integer $1\le h\le n$ and that, for each $i\in\{1,\ldots,h\}$, there exists at least one vertex $v$ in $C_d$ such that $(v,i)\in V(G)$.  Clearly, it suffices to prove the result when $G$ is an induced subgraph of $C_d\boxtimes P_h$.

    We first define the embedding of $G$ onto $G_n$.  For each $i\in\{1,\ldots,h\}$, let $S_i:=\{v\in V(C_d): (v,i)\in V(G)\}$.
    Recall that $V(C_d) = \{1, \dots, 2^{d+1} -1\}$, thus $S_i \subset \R$.
    Let $T_1,\ldots,T_h$ be the sequence of binary search trees obtained by applying \cref{lem:tree_sequence} to the sequence $S_1, \dots, S_h$.
    Let $T$ be a binary search tree with $V(T)=\{1,\ldots,h\}$ obtained by applying \cref{lem:biased-bst} with the weight function $w(i)=|V(T_i)|$,
    for each $i\in\{1,\ldots,h\}$. Let $\varphi:V(C_d)\to\{0,\ldots,d\}$ be a proper colouring of $C_d$.
    (For instance, one could set $\varphi(v) := d_{B_d}(v)$.)
    Each vertex $(v,i)$ of $G$ maps to the vertex
    \[\zeta(v,i):= (\sigma_{T_i}(x_{T_i}(v)), \sigma_T(i),
      \varphi(v)).
      \]

    First we verify that $\zeta$ does indeed take vertices of $G$ onto vertices of $G_n$.
    Let $(v,i)$ be a vertex of $G$ and let $\zeta(v,i)= (x:=\sigma_{T_i}(x_{T_i}(v)), y:=\sigma_T(i), z:=\varphi(v))$.  Clearly, $z\in\{0,\ldots,d\}$.
    Note that $\sum_{j=1}^h w(j) \leq 4n$, by \cref{lem:tree_sequence}.
    Thus, by \cref{lem:biased-bst} we have $|y|\le \log (4n) -\log |V(T_i)| \le d+2-\log |V(T_i)|$.
    By \cref{lem:tree_sequence} (complemented by \cref{obs:lambda}),
    $\h(T_i)\le \log |V(T_i)|+\lambda$  and since
    $|x|=|\sigma_{T_i}(x_{T_i}(v))|\le \h(T_i)$, we have $|x|+|y|\le d+\lambda+2$.
    Thus $(x,y,z)$ is indeed a vertex of $G_n$.

    Next we verify that $\zeta:V(G)\to V(G_n)$ is injective.  Let
    $(v,i)$ and $(w,j)$ be two distinct vertices of $G$.  If $i\neq j$
    then $\sigma_T(i)\neq\sigma_T(j)$.  We may thus assume that $i=j$,
    so $v\ne w$ and both $v$ and $w$ are nodes of $T_i$.  If $x_{T_i}(v)\neq x_{T_i}(w)$ then $\sigma_{T_i}(x_{T_i}(v))\neq\sigma_{T_i}(x_{T_i}(w))$.  We may therefore assume that $x:=x_{T_i}(v)=x_{T_i}(w)$.  This implies that $x\in I_{C_d}(v) \cap I_{C_d}(w)$ so, by Observation~\ref{proper_containment}, $vw\in E(C_d)$.  Since $v\neq w$, this implies that $z_1=\varphi(v)\neq \varphi(w)=z_2$. Thus, $\zeta(v,i)\neq \zeta(w,j)$ for $(v,i)\neq (w,j)$, so $\zeta$ is injective.

    Finally we need to verify that, for each edge $(v,i)(w,j)\in E(G)$, $G_n$ contains the edge $\zeta(v,i)\zeta(w,j)$.  Let $(x_1,y_1,z_1):=\zeta(v,i)$ and let
    $(x_2,y_2,z_2):=\zeta(w,j)$.  There are two cases to consider:

    {\bf Case~1: $j=i$.}
    In this case, $y_1=y_2=\sigma_T(i)$, $v\neq w$ and $vw\in E(C_d)$, and $v,w\in V(T_i)$.  By \cref{consistent_ancestor}, $x_1=\sigma_{T_i}(x_{T_i}(v)) \comparable  \sigma_{T_i}(x_{T_i}(w))=x_2$.  Therefore, $\zeta(v,i)\zeta(w,j)\in E(G_n)$ since it is included in $G_n$ as an edge of Type~\eqref{E1}.

    {\bf Case~2: $j=i+1$.}
    In this case, $y_2 \in L(y_1, \h(T))$ by \cref{obs:successor-encoding}.
    \cref{lem:biased-bst} ensures that $\h(T) \le \max\{d_T(i):i\in\{1,\ldots,h\}\} = \max\{\log(4n/w(i)):i\in\{1,\ldots,h\}\} \le \log (4n) \le d+2$.
    Thus $y_2 \in L(y_1, d+2)$, and so condition~\eqref{y2L} for edges of Type~\eqref{E2} is satisfied.

    Next, let $x_2':=\sigma_{T_i}(x_{T_i}(w))$. Observe that $w\in V(T_i)$ by property~\eqref{pretend_appear} of \cref{lem:tree_sequence}, so $x_2'$ is well defined.  Since $(v,i)(w,j)\in E(G)$, either $v=w$ or $vw\in E(C_d)$.  In the former case we immediately have $x_2'=x_1$, and thus $x_2'\comparable  x_1$ holds.  In the latter case, \cref{consistent_ancestor} implies that $x_2'=\sigma_{T_i}(x_{T_i}(w)) \comparable  \sigma_{T_i}(x_{T_i}(v))=x_1$.  Therefore $x_2'$ satisfies condition~\eqref{x2prime} for edges of Type~\eqref{E2}.

    Next, by the definition of $\lambda$ and by
    property~\eqref{bound_nu}  of \cref{lem:tree_sequence} (complemented by \cref{obs:lambda}), there exists a bitstring $\nu$ of length at most $\lambda$ such that $B(x_2',\nu)=B(\sigma_{T_i}(x_{T_i}(w)),\nu)=\sigma_{T_{i+1}}(x_{T_{i+1}}(w))=x_2$.  Hence, $x_2'$ and $\nu$ satisfy condition~\eqref{nu} for edges of Type \eqref{E2}.  Therefore, $\zeta(v,i)\zeta(w,j)\in E(G_n)$ since it is included in $G_n$ as an edge of Type~\eqref{E2}.
\end{proof}

\cref{thm:main_technical} follows from \cref{lem:Gn_bound} and
\cref{lem:Gn_universal}.

\section{Induced-universal graphs}\label{sec:induced-universal}

In this section we prove \cref{thm:main_induced}.  We describe a graph $U_n$ that is induced-universal for $n$-vertex members of $\mathcal{Q}_t$ and has $n\cdot 2^{O(\sqrt{\log n\log\log n})}\cdot(\log n)^{O(t^2)}$ edges and vertices.  The construction of $U_n$ relies on a relationship between induced-universal graphs and adjacency labelling schemes, which we now describe. Throughout this section, for the sake of brevity, we use $n^{o(1)}$ factors in place of more precise quantities like $2^{O(\sqrt{\log n\log\log n})}$ and (for constant $t$) $(\log n)^{O(t^2)}$.  At the end of this section we give a brief discussion of how the precise result in \cref{thm:main_induced} appears.

\citet{AdjacencyLabellingPlanarJACM} describe a $(1+o(1))\log n$-bit
\emph{adjacency labelling scheme} for graphs in $\mathcal{Q}_t$.  This
means that there is a single function $A:\{0,1\}^*\times\{0,1\}^*
\to\{0,1\}$ such that, for any $n$-vertex graph $G\in\mathcal{Q}_t$
there is an injective labelling $\ell_G:V(G)\to\{0,1\}^{(1+o(1))\log n}$ for which $A(\ell_G(v),\ell_G(w))=1$ if and only if $vw\in E(G)$.  The existence of such a labelling scheme has the following immediate consequence: For every positive integer $n$, there exists a graph $I_n$ having $n^{1+o(1)}$ \emph{vertices} such that, for every $n$-vertex graph $G\in\mathcal{Q}_t$, $I_n$ contains an induced subgraph isomorphic to $G$.  To see this, let $I_n$ be the graph with vertex set $V(I_n):=\{0,1\}^{(1+o(1))\log n}$ and for which $xy\in E(I_n)$ if and only $A(x,y)=1$.  Then, for any $n$-vertex graph $G\in\mathcal{Q}_t$, the induced subgraph $G':=I_n[\{\ell_G(v):v\in V(G)\}]$ is isomorphic to $V(G)$ (and $\ell_G$ gives the isomorphism from $G$ into $G'$).

In \cref{review} we begin by reviewing the adjacency labelling scheme of \citet{AdjacencyLabellingPlanarJACM}. In \cref{density-lower-bound} we show that the induced-universal graph $I_n$ defined in the previous paragraph has $\Omega(n^2)$ edges.  In \cref{modifications} we show how the adjacency labelling scheme can be modified so that the resulting induced-universal graph $U_n$ has $n^{1+o(1)}$ edges.

\subsection{Review of Adjacency Labelling}
\label{review}

In this section we review the adjacency labelling scheme in \cite{AdjacencyLabellingPlanarJACM}.  This review closely follows the presentation of \cite{AdjacencyLabellingPlanarJACM} with a few exceptions that we discuss in footnotes when they occur.  The main purpose of this review is to focus on a list of properties (P1)--(P6) that allow the adjacency labelling scheme to work correctly. Later, we will modify this labelling scheme and show that the modified scheme also has (suitably modified versions of) properties (P1)--(P6).

A \emph{$t$-tree} $H$ is a graph that is either a clique on $t+1$ vertices or contains a vertex $v$ of degree $t$ that is part of a $(t+1)$-clique and such that $H-\{v\}$ is a $t$-tree.  This definition implies that every $t$-tree $H$ has a \emph{construction order} $v_1,\ldots,v_n$ of its vertices such that $v_1,\ldots,v_t$ form a clique and, for each $i\in\{1,\ldots,n\}$, $v_i$ is adjacent to exactly $\min\{i-1,t\}$ vertices among $v_1,\ldots,v_{i-1}$ and these vertices form a clique.

Fix a construction order $v_1,\ldots,v_n$ of $H$ and define
\[C_{v_i}:=\{v_i\}\cup\big\{v_j:v_iv_j\in E(H),\,
  j\in\{1,\ldots,\max(t+1,i)\}\big\}
\]
for each $i\in\{1,\ldots,n\}$.  Then the vertices in $C_{v_i}$ form a clique of order $t+1$ in $H$ that we call the \emph{family clique} of $v_i$. For each $v\in V(H)$, each vertex $w\in C_v$ is called an \emph{$H$-parent} of $v$.  A vertex $a$ of $H$ is an \emph{$H$-ancestor} of $v$ if $a=v$ or $a$ is an $H$-ancestor of some $H$-parent of $v$.  Note that $v$ is an $H$-parent and an $H$-ancestor of itself.

The construction order $v_1,\ldots,v_n$ implies that every $t$-tree $H$ has a proper colouring using $t+1$ colours.  Fix such a colouring $\varphi:V(H)\to\{1,\ldots,t+1\}$. For any vertex $v$ of $H$, the \emph{$i$-parent} of $v$, denoted by $p_i(v)$,  is the unique node $w\in C_v$ with $\varphi(w)=i$.  Note that $v$ is the $\varphi(v)$-parent of itself, i.e, $p_{\varphi(v)}(v)=v$.

It is well known that every graph of treewidth at most $t$ is a subgraph of some $t$-tree.  Thus it is sufficient to describe how the adjacency labelling scheme in \cite{AdjacencyLabellingPlanarJACM} works for any $n$-vertex subgraph $G$ of $H\boxtimes P$ where $H$ is a $t$-tree $H$ and $P$ is a path.  Without loss of generality, we may assume that the vertices of $P$ are the integers $1,\ldots,h$ in the order they occur on the path $P$ and that, for each $y\in\{1,\ldots,h\}$ there exists at least one $v\in V(H)$ such that $(v,y)\in V(G)$, so $h\le n$. Similarly, we may assume that $|V(H)|\le n$.\footnote{This assumption requires that $n\ge t+1$.  We ignore the graphs in $\mathcal{Q}_t$ having fewer than $t+1$ vertices since there are only $O(2^{\binom{t}{2}})$ such graphs.}

The adjacency labelling scheme in \cite{AdjacencyLabellingPlanarJACM}
makes use of an \emph{interval supergraph} of $H$.  Each vertex $v$ of
$H$ is mapped to a real interval $[a_v,b_v]$ in such a way that $vw\in
E(H)$ implies that $[a_v,b_v]\cap [a_w,b_w]\neq\emptyset$.
\cref{lem:twpw} essentially says that this mapping is \emph{thin}, in the following sense:

\begin{property}
    \item for any $x\in \R$, $|\{v\in V(H): x\in[a_v,b_v]\}|\in O(t\log n)$.\label{thin}
\end{property}

For each $y\in\{0,\ldots,h+1\}$, let $L_y:=\{v\in V(H): (v,y)\in V(G)\}$ and let $S_y:=\bigcup_{v\in L_y}C_v$.  The labelling scheme first finds sets $S^+_1,\ldots,S^+_h$ of total size $O(n)$ such that $S^+_y\supseteq S_{y-1}\cup S_y\cup S_{y+1}$.\footnote{The original labelling scheme only uses $S^+_y\supseteq S_{y-1}\cup S_y$ but it is convenient for us to include $S_{y+1}$ as well and this change does not invalidate anything in the original scheme.}

The adjacency labelling scheme uses a sequence of binary search trees
$T_1,\ldots,T_h$ such that, for each $y\in\{1,\ldots,h\}$ and each
$v\in S^+_y$, $T_y$ contains at least one value $x\in [a_v,b_v]$.
($T_1,\ldots,T_h$ form a \emph{bulk tree sequence} as defined in \cref{lem:tree_sequence}, that also plays a central role in the proof of \cref{thm:main}.) This leads to the following very important definition: For each $v\in S^+_y$, $x_{y}(v)$ is the minimum-depth node $x$ of $T_y$ such that $x\in [a_v,b_v]$. Note that $x_y(v)$ is well-defined since $T_y$ contains at least one node $x\in[a_v,b_v]$.   The following property follows from these definitions and Helly's Theorem:\footnote{Helly's Theorem (in 1 dimension): Any finite set of pairwise intersecting intervals has a non-empty common intersection.}

\begin{property}\setcounter{enumi}{1}
    \item For any $v\in L_{y-1}\cup L_y\cup L_{y+1}$, there exists a path $P_y(v)$ that begins at the root of $T_y$ and contains every node in $X_y(v):=\{x_{y}(w): w\in C_v\}$.\label{clique-path}
\end{property}

For each $y\in\{1,\ldots,h\}$ and each $v\in L_y$, we define $P_y(v)$ to be the minimum length path in $T_y$ that satisfies \pref{clique-path}, so that $P_y(v)$ begins at the root of $T$ and ends at the node in $X_y(v)$ of maximum $T_y$-depth. For each $y\in\{1,\ldots,h\}$ and each $x\in V(T_y)$, $d_y(x)$ denotes the depth of $x$ in the tree $T_y$.

It is helpful to think of $x_y$ as a function $x_y:S^+_y\to V(T_y)$.  For each $y\in\{1,\ldots,h\}$ and each node $x$ of $T_y$, let $B_{y,x}:=\{v\in S^+_y: x_y(v)=x\}=x_y^{-1}(x)$.  Since $x\in[a_v,b_v]$ for each $v\in B_{y,x}$, \pref{thin} implies the following property:

\begin{property}\setcounter{enumi}{2}
    \item For each $y\in\{1,\ldots,h\}$ and each $x\in V(T_y)$, $|B_{y,x}|\in O(t\log n)$.\label{small-bags-i}
\end{property}

Recall that, for any node $x$ in a binary search tree $T$, $\sigma_T(x)$ is the binary string $b_1,\ldots,b_k$ obtained from the root-to-$x$ path $x_0,\ldots,x_k$ in $T$ by setting $b_i=0$ or $b_i=1$ depending on whether $x_i$ is the left or right child of $x_{i-1}$, respectively. Note that the function $\sigma_T:V(T)\to\{0,1\}^*$ is injective.  We extend this notation to paths in $T$ so that, if $P$ is a path from the root of $T$ to some node $x$, then $\sigma_T(P):=\sigma_T(x)$.  We will use $\sigma_y$ as a shorthand for $\sigma_{T_y}$.

Let $\psi_y:S^+_y\to\{1,\ldots,O(t\log n)\}$ be a colouring of $S^+_y$ such that, for each $x\in V(T_y)$ and each distinct pair $v,w\in B_{y,x}$, $\psi_y(v)\neq\psi_y(w)$.  Such a colouring exists by \pref{small-bags-i} and because $x_y$ is a function, so each $v\in S^+_y$ appears in $B_{y,x}$ for exactly one $x\in V(T_y)$.  Note that, for any $v\in S^+_y$, the pair $(x_y(v), \psi_y(v))$ uniquely identifies $v$.  Since the signature function $\sigma_y:=\sigma_{T_y}$ is injective, this means that the pair $(\sigma_y(x_y(v)),\psi_y(v))$ also uniquely identifies $v$:

\begin{property}\setcounter{enumi}{3}
    \item For any $y\in\{1,\ldots,h\}$ and any $v,w\in S^+_y$, $v=w$ if and only if $\sigma_y(x_y(v))= \sigma_y(x_y(w)) $ and $\psi_y(v)=\psi_y(w)$.\label{unique-match}
\end{property}

The binary search tree sequence $T_1,\ldots,T_h$  has two additional
properties that are crucial:
\begin{property}\setcounter{enumi}{4}
    \item For each $y\in\{1,\ldots,h\}$, $T_y$ has height $\h(T_y)\le \log|S^+_y|+o(\log n)$. \label{tree-height}
    \item There exists a universal function $J:\{0,1\}^*\times\{0,1\}^*\to\{0,1\}^*$ such that for each $y\in\{1,\ldots,h-1\}$ and each $v\in S^+_y\cap S^+_{y+1}$, there exists a bitstring $\mu_y(v)$ of length $o(\log n)$ such that $J(\sigma_{y}(x_y(v)),\mu_y(v))=\sigma_{y+1}(x_{y+1}(v))$.
    \label{transition-code-v}
\end{property}
The bitstring $\mu_y(v)$ is called a \emph{transition code}.\footnote{Our presentation here differs slightly from that in \cite{AdjacencyLabellingPlanarJACM}.  In \cite{AdjacencyLabellingPlanarJACM}, the transition code is used to take $\sigma(P_y(v))$ onto $\sigma(P_{y+1}(v))$.  However, the proof that this is possible \cite[Section~5.3]{AdjacencyLabellingPlanarJACM} uses the existence of the transition code described in \pref{transition-code-v} for each $w\in C_v$ and the fact that $\sigma(P_{y+1}(v))=\sigma(x_{y+1}(w))$ for some $w\in C_v$.}

\subsubsection{The Labels}
\label{labels-i}

For each vertex $(v,y)$ of $G\subseteq H\boxtimes P$, the label $\ell_G(v,y)$ has these parts:

\begin{compactenum}[(L1)]
    \item $\alpha(y)$: a bitstring of length of $\log n-\log |S^+_y|+o(\log n)$.  Given $\alpha(y_1)$ and $\alpha(y_2)$ for any $y_1,y_2\in\{1,\ldots,h\}$, it is possible to distinguish between the following cases:
    \begin{inparaenum}
        \item $y_1=y_2$;
        \item $y_1=y_2+1$;
        \item $y_1=y_2-1$; and
        \item $|y_1-y_2|\ge 2$.
    \end{inparaenum}

    \item $\sigma_y(P_y(v))$: this is a bitstring of length at most $\h(T_y)\le \log|S^+_y| + o(\log n)$

    \item $\eta_y(v)$: a bitstring of length $o(\log n)$.  This
      bitstring is designed so that, for any vertex $v\in S^+_y\cap
      S^+_{y+1}$, it is possible to recover $\sigma_{y+1}(P_{y+1}(v))$
      given only $\sigma_y(P_y(v))$ and $\eta_y(v)$.  The existence of
      $\eta_y(v)$ follows easily from the existence of $\mu_y(v)$ in
      \pref{transition-code-v} and from the knowledge of the content of (L5) below.

    \item $\varphi(v)$: the colour of $v$ in the proper colouring of $H$ (a bitstring of length $\lceil\log(t+1)\rceil$).

    \item $d_y(x_y(p_i(v)))$ for each $i\in\{1,\ldots,t+1\}$ (a bitstring of length $O(t\log\log n)$).\label{dxp}

    \item $\psi_{y+b}(p_i(v))$ for each $i\in\{1,\ldots,t+1\}$ and each $b\in\{-1,0,1\}$ (a bitstring of length $O(t\log\log n + t\log t)$).\label{psi}\footnote{This is another place where our presentation differs slightly from that in \cite{AdjacencyLabellingPlanarJACM}.  In \cite{AdjacencyLabellingPlanarJACM}, the information contained in (L4), (L5), and (L6) is spread across several different parts of the label.}

    \item $a_y(v)$: A bitstring of length $3(t+1)$ that indicates, for each $i\in\{1,\ldots,t+1\}$ and each $b\in\{-1,0,1\}$ whether or not $G$ contains the edge with endpoints $(v,y)$ and $(p_i(v),y+b)$.
\end{compactenum}

The label (L1) comes from \cref{obs:successor-encoding} but requires some further explanation.  First we remark that, like all parts of $\ell_G(v,y)$, the string $\alpha(y):=\alpha_G(y)$ depends on both $G$ and $y$. The string $\alpha(y)$ consists of two parts: $\alpha_1(y)$ is a bitstring of length at most $\log n-\log |S^+_y|$ and $\alpha_2(y)$ is a bitstring of length at most $\log\log n+O(1)$. These strings are designed so that there is a universal function $N$, that does not depend on $G$, such that $N(\alpha(y_1))=\alpha_1(y_2)$ if and only if $y_2=y_1+1$. Clearly this makes it possible to distinguish between cases~(a)--(d). It also has the following implication:  For any fixed binary string $\bar{y}_1$ that we interpret as $\alpha(y_1)$ there are at most $2^{\log \log n+O(1)}=O(\log n)$ binary strings that result in case (b). Indeed, these are strings $\bar{y}_2:=a_1\mathbin{\circ}a_2$ (where $\circ$ denotes concatenation of strings) such that $N(\alpha(y_1))=a_1$ and $|a_2|\le \log\log n + O(1)$. The set of such strings turns out to be useful, so we denote it with $L(\alpha(y_1)):=\{N(\alpha(y_1))\mathbin{\circ} s: s\in\{0,1\}^{\log\log n+O(1)}\}$.


\subsubsection{Adjacency Testing}

Given inputs $\ell_G(v_1,y_1)$ and $\ell_G(v_2,y_2)$, the adjacency testing function $A$ uses $\alpha(y_1)$ and $\alpha(y_2)$ to determine which of the following cases applies:
\begin{enumerate}[(a)]
    \item $y:=y_1=y_2$.  For each $i\in\{1,\ldots,t+1\}$, determine if $v_1=p_i(v_2)$ (or \textit{vice-versa}) and, if so, use $a_y(v_2)$ (or $a_y(v_1)$, respectively) to determine if $(v_1,y)$ and $(v_2,y)$ are adjacent in $G$. Specifically, if $v_1=p_i(v_2)$ then one of the bits in $a_y(v_2)$ indicates whether or not $(v_1,y)$ and $(v_2,y)$ are adjacent in $G$. If $v_1\neq p_i(v_2)$ and $v_2\neq p_i(v_1)$ for every $i\in\{1,\ldots,h\}$, then $v_1v_2\not\in E(H)$ and hence $(v_1,y)$ and $(v_2,y)$ are not adjacent in $G\subseteq H\boxtimes P$.

    By \pref{unique-match}, testing if $v_1=p_i(v_2)$, is equivalent to testing if $\sigma_y(x_y(v_1))=\sigma_y(x_y(p_i(v_2)))$ and $\psi_y(v_1)=\psi_y(p_i(v_2))$. We now show that $\ell_G(v_1,y_1)$ and $\ell_G(v_2,y)$ contain enough information to perform this test.
    \begin{compactitem}
        \item We can recover $d_y(x_y(v_1))=d_y(x_y(p_{\varphi(v_1)}(v_1)))$ and using this, recover $\sigma_y(x_y(v_1))$ from $\sigma_y(P_y(v_1))$ and $d_y(x_y(v_1))$.  Next, we can recover $\sigma_y(x_y(p_i(v_2)))$ from $\sigma_y(P_y(v_2))$ and $d_y(x_y(p_i(v_2)))$. This makes it possible to test if $\sigma_y(x_y(v_1))=\sigma_y(x_y(p_i(v_2)))$.
        \item  The colour $\psi_y(v_1)$ can be recovered from $\ell_G(v_1,y_1)$ since $\psi_y(v_1)=\psi_y(p_{\varphi(v_1)}(v_1))$.  The colour $\psi_y(p_i(v_2))$ is stored explicitly in part (L6) of $\ell_G(v_2,y_2)$.  This makes it possible to test if $\psi_y(v_1)=\psi_y(p_i(v_2))$.
    \end{compactitem}
    \item $y:=y_2=y_1+1$.  In this case, recover $\sigma_y(P_y(v_1))$ from $\sigma_{y_1}(P_{y_1}(v_1))$ and $\eta_{y_1}(v_1)$.  At this point, the algorithm proceeds exactly as in the previous case except for two small changes:
    \begin{inparaenum}[(i)]
        \item the value of $\psi_{y}(v_1)=\psi_{y_1+1}(v_1)$ is obtained from (L6); and
        \item in the final step one bit of $a_{y_2}(v_2)$ (L7) is used to check if $(v_2,y_2)$ is adjacent to $(v_1,y_1)=(p_i(v_2),y_2-1)$ in $G$.
    \end{inparaenum}

    \item $y:=y_1=y_{2}+1$. This case is symmetric to the previous case with the roles of $(v_1,y_1)$ and $(v_2,y_2)$ reversed.

    \item $|y_1-y_2|\ge 2$.  In this case $y_1\neq y_2$ and $y_1y_2\not\in E(P)$ and therefore $(v_1,y_1)$ and $(v_2,y_2)$ are not adjacent in $G\subseteq H\boxtimes P$.
\end{enumerate}

\subsection{Edge density of the induced-universal graph $I_n$}
\label{density-lower-bound}

We now explain why the induced-universal graph $I_n$ defined by the labelling scheme in \cite{AdjacencyLabellingPlanarJACM} is not sparse.  It produces a universal graph $I_n$ having $\Omega(n^2)$ edges.  The main issue is the definition of $P_y(v)$ as a path in $T_y$ that contains every node in $X_y(v):=\{x_y(w):w\in C_v\}$.  The problem comes from the fact that there can be nodes in $X_y(v)$ that have much greater $T_y$-depth than $x_y(v)$.  As we will show below, this ultimately leads to a large complete bipartite graph in $I_n$ with sides $L$ and $R$ in which the elements of $L$ all correspond to a single vertex $(v,y)$ of $H\boxtimes P$.  This problem even occurs when $P$ consists of a single vertex and $H$ is a tree.

Consider the tree $H$ illustrated in \cref{bad-example} that consists of a $5$-vertex path $\beta,u,v,w,\alpha$ and a set of $n-5$ leaves.  Exactly half of these leaves are adjacent to $\beta$ and exactly half are adjacent to $\alpha$.  If we root $H$ at $w$ and perform a preorder traversal, we obtain a construction order $v_1,\ldots,v_n$ of $H$ in which $C_w=\{v,w\}$ and $C_a$ contains $a$ and the parent of $a$ for each $a\in V(H)\setminus\{w\}$.

Observe that $H-\{v\}$ has two components each of size exactly
$(n-1)/2$. Therefore, when the vertices of $H$ are mapped onto intervals it is natural to map $v$ onto the dominating interval $[a_v,b_v]:=[1,n]$.  Since $H-\{v\}$ consists of two stars centered at $\alpha$ and $\beta$, it is then natural to have $[a_\alpha,b_\alpha]:=[1, (n-1)/2]$ and $[a_\beta,b_\beta]:=[n/2+1,n-1]$. Now, $H-\{v,\alpha,\beta\}$ has no edges, so the remaining vertices can be mapped to appropriate zero-length intervals. All nodes adjacent to $\alpha$ (including $w$) are mapped to $[i,i]$ for distinct $i\in\{1,\ldots, (n-5)/2\}$. All nodes adjacent to $\beta$ (including $u$) are mapped to $[n/2+j, n/2+j]$ for distinct $j\in\{1,\ldots, (n-5)/2\}$.

\begin{figure}
    \begin{center}
        \includegraphics{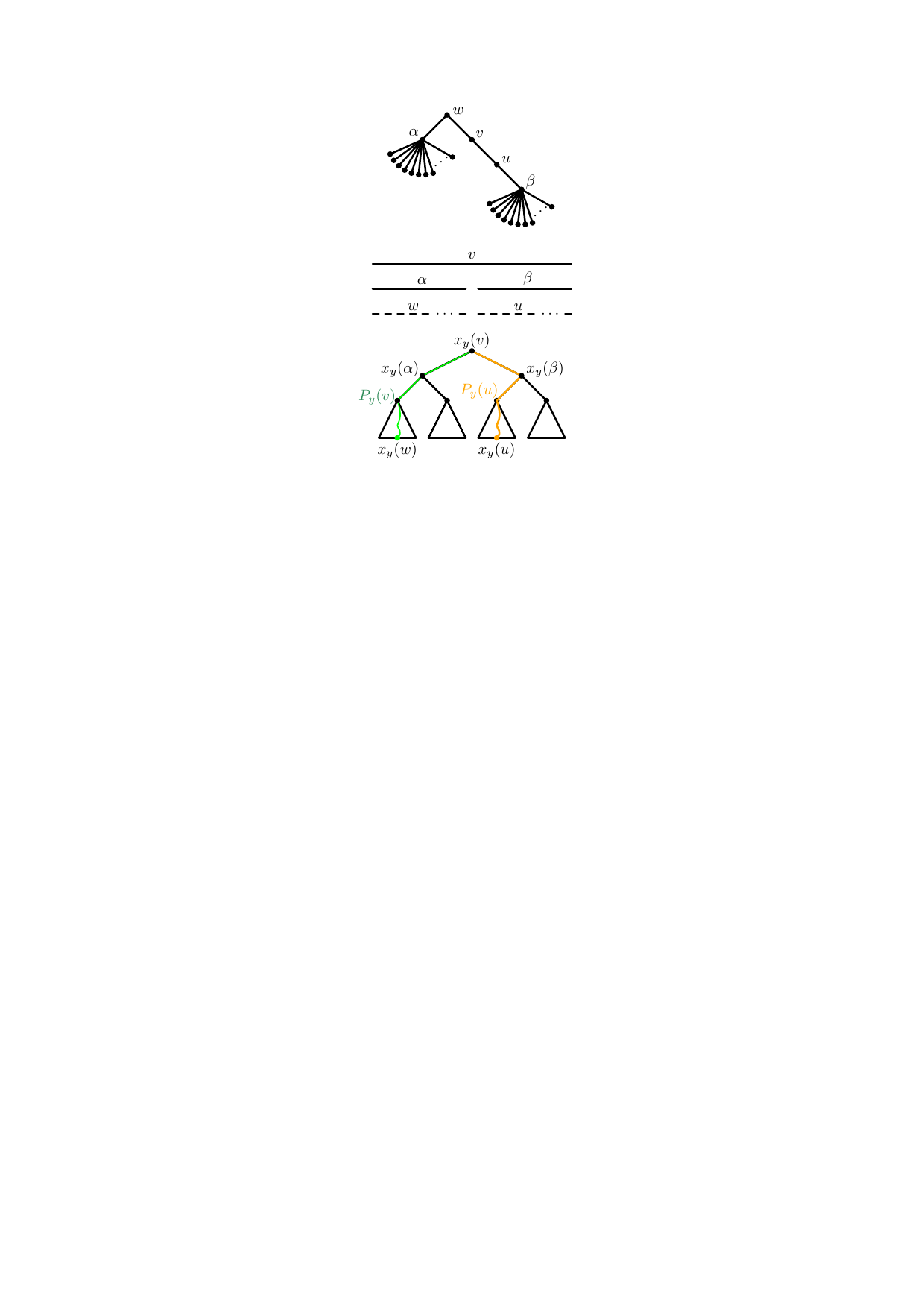}
    \end{center}
    \caption{A tree $H$ that leads to $\Omega(n^2)$ edges in $I_n$.}
    \label{bad-example}
\end{figure}

Let $\alpha_i$ (respectively $\beta_j$) denote the node adjacent to $\alpha$ (respectively, $\beta$) that maps to the interval $[i,i]$ (respectively $[n/2+j,n/2+j]$).  It is entirely possible that $w=\alpha_p$ and $u=\beta_q$ for some $n/12< p,q\le 2n/12$.  Suppose this is the case.  For each $i,j\in\{1,\ldots,n/12\}$, consider the induced subgraph $H_{i,j}$ of $H$ having vertex set $V(H_{i,j})$ that contains
\begin{compactenum}
    \item $\beta,u,v,w,\alpha$;
    \item $\alpha_1,\ldots,\alpha_i$ and $\alpha_{2n/12+1},\ldots,\alpha_{2n/12+n/12-i}$;
    \item $\alpha_{n/4+1},\ldots,\alpha_{n/4+n/12}$;
    \item $\beta_1,\ldots,\beta_j$ and $\beta_{2n/12+1},\ldots,\beta_{2n/12+n/12-j}$;
    \item $\beta_{n/4+1},\ldots,\beta_{n/4+n/12}$;
\end{compactenum}

Let $P_1$ be a path consisting of a single vertex.  If we apply the labelling scheme of \citet{AdjacencyLabellingPlanarJACM} to $H_{i,j}\boxtimes P_1$, to obtain a labelling $\ell_{i,j}:V(H_{i,j})\to\{0,1\}^*$ then the binary search tree $T_{1}$ used in defining $\ell_{i,j}$ could be any balanced binary search tree containing
\begin{compactenum}
    \item a root $r:=n/2$ so that $x_y(v)=r$.
    \item depth-$1$ nodes $a=n/4$ and $b=3n/4$ so that $x_y(\alpha)=a$ and $x_y(\beta)=b$.
    \item $\{k:\alpha_k \in V(H_{i,j})\}$;
    \item $\{n/2+k:\beta_k\in V(H_{i,j})\}$.
\end{compactenum}

The first two levels of $T_1$ are fixed, independent of $i,j$ and each of the four depth-$2$ nodes is the root of a subtree of size exactly $n/12$. In particular, the ``shape'' of $T_{1}$ can be the same for any $i,j\in\{1,\ldots,n/12\}$.  For example, if $n/12=2^k-1$ for some integer $k$, then $T_1$ could be a complete binary tree of height $k+2$.  Suppose that this is the case.  Then $\sigma_1(P_1(u))=\sigma_1(x_1(u))$ depends only on the choice of $j$.  Similarly, $\sigma_1(P_1(v))=\sigma_1(x_1(w))$ depends only on the choice of $i$.

This means that the label $\ell_{i}(v):=\ell_{i,j}(v,1)$ depends only on $i$. Furthermore, for any $i_1\neq i_2$, $\ell_{i_1}(v)\neq\ell_{i_2}(v)$.
Similarly, the label $\ell_j(u):=\ell_{i,j}(u,1)$ depends only on $j$ and is distinct for each $j\in\{1,\ldots,n/12\}$.  Furthermore $uv$ is an edge of $T_{i,j}$ for each $i,j\in\{1,\ldots,n/12\}$, so $A(\ell_{i}(v),\ell_{j}(u))=1$ for each $i,j\in\{1,\ldots,n/12\}$. Therefore, the universal graph $I_n$ contains a complete bipartite subgraph with parts $L:=\{\ell_{i}(v):i\in\{1,\ldots,n/12\}\}$ and $R:=\{\ell_{j}(u):j\in\{1,\ldots,n/12\}\}$.  Therefore $|E(I_n)|\ge n^2/144$.

\subsection{A sparse induced-universal graph}
\label{modifications}

We now describe how to modify the adjacency labelling scheme of
\citet{AdjacencyLabellingPlanarJACM} so that the resulting
induced-universal graph is sparse.  As discussed above, the main
difficulty comes from the fact that, for some vertex $(v,y)\in V(G)$,
$v$ can have an $H$-parent $w$ such that $x_y(w)$ has $T_y$-depth much
greater than $x_y(v)$.  In order to avoid this, we modify the function
$x_y:S^+_y\to V(T_y)$ to create a new function $x'_y$ such that, if $w$ is an $H$-parent of $v$ then $d_y(x'_y(w)) \le d_y(x'_y(v)) + 1$.  This has to be done carefully in order to preserve \pref{clique-path} and \pref{small-bags-i}. Initially $x'_y(v)=x_y(v)$ for each $v\in S^+_y$, but then modifications are performed by calling the following recursive procedure with the root of $T_y$ as its argument:

\medskip
\noindent
\begin{minipage}{\textwidth}
    $\textsc{Fixup}(x)$:
    \begin{algorithmic}[1]
        \FOR{each $v\in S^+_y$ such that $x'_y(v)=x$}
            \FOR{each $w\in C_v \cap S^+_y$}
                \IF{$d_y(x'_y(w)) > d_y(x)+1$}
                    \STATE{\COMMENT{\emph{this implies that $x'_y(w)=x_y(w)$}}}
                    \STATE{$x'_y(w)\gets\mbox{the depth-$(d_y(x)+1)$ $T_y$-ancestor of $x'_y(w)$}$\label{changes}}
                    \STATE{\COMMENT{\emph{so $x'_y(w)$ becomes a child of $x=x'_y(v)$}} }
                \ENDIF
            \ENDFOR
        \ENDFOR
        \STATE{\textsc{Fixup}(left child of $x$) (if any)}
        \STATE{\textsc{Fixup}(right child of $x$) (if any)}
    \end{algorithmic}
\end{minipage}
\medskip


Observe that the only modifications to $x'_y$ occur in Line~\ref{changes} and they involve setting $x'_y(w)$ to a $T_y$-ancestor of $x'_y(w)$.  For each $v\in S^+_y$, $x'_y(v)=x_y(v)$ before the algorithm runs.  Therefore, after the algorithm runs to completion, $x'_y(v)$ is a $T_y$-ancestor of $x_y(v)$.  This ensures that \pref{clique-path} holds for $x'_y$.  Furthermore, Lines~3--6 of the algorithm ensure that, for any $H$-parent $w$ of $v$, $d_y(x'_y(w))\le d_y(x'_y(v))+1$.  Therefore, after running $\textsc{Fixup}(r)$, the following strengthening of \pref{clique-path} holds:

\begin{pproperty}\setcounter{enumi}{1}
    \item For any $v\in L_{y-1}\cup L_y\cup L_{y+1}$, there exists a path $P'_y(v)$ of length at most $d_y(x_y(v))+1$ that begins at the root of $T_y$ and contains every node in $X'_y(v):=\{x'_{y}(w): w\in C_v\}$.\label{clique-path-ii}
\end{pproperty}

Property~\pref{clique-path} is one of two critical properties needed
by the function $x_y$.  The other, \pref{small-bags-i}, bounds the
size of $B_{y,x}:=\{v\in S^+_y: x_y(v)=x\}$ by $O(t\log n)$.  However,
it is not the case that $x'_y$ satisfies \pref{small-bags-i}.  Indeed,
$B'_{y,x}:=\{v\in S^+_y: x'_y(v)=x\}$ can be much larger than
$B_{y,x}$, and even larger than $O(t\log n)$.  Nevertheless, the next lemma shows that, for fixed $t$, the size of $B'_{y,x}$ remains polylogarithmic in $n$.

\begin{lemma}\label{small-bags-ii-lem}
    For each $y\in\{1,\ldots,h\}$ and each node $x$ of $T_y$, $|B'_{y,x}|\in O(t(\log n)^{t+2})$.
\end{lemma}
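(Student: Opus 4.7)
The plan is to prove this lemma by induction on the depth $d$ of $x$ in $T_y$. The base case $d=0$ is immediate because \textsc{Fixup} never moves a vertex to its own or a shallower position; in particular no vertex can be moved to the root, so $B'_{y,x}\subseteq B_{y,x}$ has size $O(t\log n)$ by \pref{small-bags-i}.

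For the inductive step, I split $B'_{y,x}$ into the ``originals'' $\{w:x_y(w)=x\}\subseteq B_{y,x}$, which number at most $O(t\log n)$, and the ``moved'' set $M_x=\{w\in B'_{y,x}:x_y(w)\ne x\}$. For each $w\in M_x$, unrolling \textsc{Fixup} yields a chain $w=u_0,u_1,\dots,u_r$ of distinct vertices with $u_i\in C_{u_{i+1}}$, $d_y(x'_y(u_i))=d-i$, and $u_r$ grounded at the $r$-th $T_y$-ancestor $x_{-r}$ of $x$ (so $u_r\in B_{y,x_{-r}}$, of size $O(t\log n)$). Because $H$ is a $t$-tree with proper $(t+1)$-colouring $\varphi$ and $u_i=p_{\varphi(u_i)}(u_{i+1})$, each chain is determined by the grounded endpoint $u_r$ together with the colour sequence $(\varphi(u_0),\dots,\varphi(u_{r-1}))$.

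The naive count of such chains yields $t^{O(\log n)}$ and is far too weak, so the plan is to use the path property \psref{clique-path-ii} to collapse most of the choices. Specifically, \psref{clique-path-ii} applied to each $u_{i+1}$ forces the set $X'_y(u_{i+1})=\{x'_y(u):u\in C_{u_{i+1}}\}$ onto a single root-to-leaf path of $T_y$, which by induction along the chain turns out to coincide with the root-to-$x$ path up to at most one extra level. I therefore plan to encode each $w\in M_x$ by a grounded endpoint $u_r$ (at most $(d+1)\cdot O(t\log n)$ choices across the ancestors of $x$) together with a ``depth profile'', i.e.\ a vector in $\{0,1,\dots,d+1\}^{t+1}$ indexed by colour class that records, for each colour $c$, the depth along the root-to-$x$ path at which the $c$-coloured member of the chain sits. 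This gives at most $(d+2)^{t+1}$ profiles and, since $d=O(\log n)$, yields $|B'_{y,x}|\le O(t(\log n)^{t+2})$.

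The main obstacle will be showing that this encoding is injective: from $u_r$ and a depth profile, I must recover the entire chain colour by colour using the identity $u_i=p_{\varphi(u_i)}(u_{i+1})$ together with the path property. The delicate point arises for chains of length greater than $t+1$, which must necessarily reuse colours; the key observation I expect to exploit is that a repeated colour's later occurrence must lie at a shallower depth on the same root-to-leaf path, and is therefore already determined by the corresponding profile entry, so it introduces no additional branching.
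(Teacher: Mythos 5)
Your chain decomposition is exactly the one in the paper's proof: unrolling \textsc{Fixup} to trace a path $w=w_0,\ldots,w_d$ in $H$ with $w_{i-1}\in C_{w_i}$, with $x'_y(w_i)$ climbing one level of $T_y$ per step, and with $w_d$ grounded (i.e.\ $x'_y(w_d)=x_y(w_d)$, so $w_d\in B_{y,x_d}$). The paper then observes that this makes $w$ a \emph{$d$-reachable $H$-ancestor} of $w_d$ in the sense of Pilipczuk and Siebertz, and invokes their Lemma~13 (cited as \cite{pilipczuk.siebertz:polynomial-arxiv}) to bound the number of such ancestors of any fixed vertex by $\binom{d+t}{t}$; summing $|B_{y,x_d}|\binom{d+t}{t}$ over $d=0,\ldots,k$ with $|B_{y,x_d}|\in O(t\log n)$ and $k\le\h(T_y)\in O(\log n)$ gives the claim. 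Your ``depth profile'' encoding is in effect an attempt to reprove this cited counting lemma, and the injectivity of your encoding --- which you flag as ``the delicate point'' --- is precisely the content of that lemma and is not established in your sketch.

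Concretely, the gap is real. When a colour $c$ occurs at two chain positions $i<j$, your profile has a single entry for $c$, yet the reconstruction needs to recover both $u_i=p_c(u_{i+1})$ and $u_j=p_c(u_{j+1})$; your claim that the shallower occurrence is ``already determined'' does not follow. Leaning on \psref{clique-path-ii} does not help: all it tells you is that the images $x'_y(p_c(u_{j+1}))$ lie on one root-to-leaf path, but distinct $H$-parents of $u_{j+1}$ (with different colours) may share the same $x'_y$-image --- this is exactly why the disambiguating colouring $\psi'_y$ is needed elsewhere --- so a depth on that path does not single out a colour $c$. Indeed, the paper's proof of this lemma never uses \psref{clique-path-ii}; the path structure along the chain already follows from the \textsc{Fixup} unrolling itself. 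Two smaller remarks: the stated count $(d+1)\cdot O(t\log n)\cdot(d+2)^{t+1}$ is $O\bigl(t(\log n)^{t+3}\bigr)$, not $O\bigl(t(\log n)^{t+2}\bigr)$ as you conclude --- the $(d+1)$ factor for choosing the ancestor level is redundant because the profile already determines $r$, so the arithmetic is repairable but inconsistent as written --- and the framing as an induction on the depth of $x$ is cosmetic, since the inductive hypothesis is never invoked past the base case; both your argument and the paper's are really direct counts.
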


\begin{proof}
    Let $x$ be some node of $T_y$ and suppose that $x'_y(w)=x$ for
    some $w\in S^+_y$. We now define a path $w_0,w_1,w_2,\ldots,w_d$ in $H$ by the following procedure. We start with
    $w_0=w$. At each step    $i\ge 0$, we first check whether
    $x_y(w_i)=x'_y(w_i)$, and, if so, we set
    $d:=i$ and stop the process. Otherwise, it means that the
    definition of $x'_y(w_i)$ was modified at some point by $\textsc{Fixup}$, and thus $w_i$ has a neighbor $w_{i+1}$ in $H$ with $w_i\in C_{w_{i+1}}$, such that
    $x'_y(w_{i})$ was set to be a child of
    $x'_y(w_{i+1})$ in $T_y$ by $\textsc{Fixup}$. In this way, we obtain a path $w_0,w_1,w_2,\ldots,w_d$, $d\ge 0$, in $H$ such that
    \begin{compactenum}[(a)]
        \item $w_0=w$;
        \item $w_{i-1}$ is an $H$-parent of $w_i$ for each $i\in\{1,...,d\}$;
        \item $x'_y(w_{i})$ is the $T_y$-parent of $x'_y(w_{i-1})$ for each $i\in\{1,...,d\}$; and
        \item $x_y(w_d)=x'_y(w_d)$.
    \end{compactenum}
    In particular, $w$ is an $H$-ancestor of $w_d$ and there is a path $w_0,\ldots,w_d$ in $H$ of length at most $d$ with endpoints $w$ and $w_d$.  In the language of \citet{pilipczuk.siebertz:polynomial-journal} $w_0$ is \emph{$d$-reachable} from $w_d$.  \citet[Lemma~13]{pilipczuk.siebertz:polynomial-journal} show that the number of $d$-reachable $H$-ancestors of any node $v$ in a $t$-tree $H$ is at most $\binom{d+t}{t}$.

    Now, let $x=x_0,\ldots,x_k$ be the path from $x=x_0$ to the root
    $x_k$ of $T_y$. By the preceding argument, for each $w\in
    B'_{y,x}$ there exists some $d\in\{0,\ldots,k\}$ such that $w$ is
    a $d$-reachable $H$-ancestor of some node $v\in B_{y,x_d}$.
    Recall that by \pref{tree-height}, $k\le \h(T_y)=(1+o(1))\log n$, so it follows that
    \[
        |B'_{y,x}|
            \le \sum_{d=0}^k |B_{y,x_d}|\binom{d+t}{t}
            \in O(t\log n\cdot k^{t+1})
            \subseteq O(t(\log n)^{t+2}) \enspace . \qedhere
    \]
\end{proof}

Therefore, by \cref{small-bags-ii-lem}, $x'_y$ satisfies the following weakening of \pref{small-bags-i}:

\begin{pproperty}\setcounter{enumi}{2}
    \item For each $y\in\{1,\ldots,h\}$ and each $x\in V(T_y)$, $|B'_{y,x}|\in O(t(\log n)^{t+2})$. \label{small-bags-ii}
\end{pproperty}



Let $\psi'_y:S^+_y\to\{1,\ldots,O(t(\log n)^{t+2})\}$ be a colouring of $S^+_y$ such that, for each $x\in V(T)$ and each distinct pair $v,w\in B'_{y,x}$ $\psi'_y(v)\neq\psi'_y(w)$. Such a colouring exists because, by \psref{small-bags-ii}, $|B'_{y,x}|\in O(t(\log n)^{t+2})$ and $x'_y$ is a function, so each $v\in S^+_y$ appears in $B'_{y,x}$ for exactly one $x\in V(T_y)$.  Since $x'_y:S^+_y\to V(T_y)$ is a function and $\sigma_y$ is injective we have the following variant of \pref{unique-match}:

\begin{pproperty}\setcounter{enumi}{3}
    \item For any $y\in\{1,\ldots,h\}$ and any $v,w\in S^+_y$,  $v=w$ if and only if  $\sigma_y(x'_y(v))= \sigma_y(x'_y(w)) $ and $\psi'_y(v)=\psi'_y(w)$.\label{unique-match-ii}
\end{pproperty}

\subsection{The New Labels}

For each vertex $(v,y)$ of $G$, the label $\ell'_G(v,y)$ has these parts:

\begin{enumerate}[(NL1)]
    \item $\alpha(y)$: this is unmodified from the original scheme.

    \item $\sigma_y(x_y(v))$: note that this is not $\sigma_y(P'_y(v))$, but
    $\sigma_y(x'_y(v))$ can be recovered from $\sigma_y(x_y(v))$ and
    $d_y(x'_y(p_{\varphi(v)}(v)))$. This makes it possible to recover
    $\sigma_y(P'_y(v))=\sigma_y(x'_y(v))\mathbin{\circ} r_y(v)$ where
    $r_y(v)$ is defined in \nlref{new-l8}, below (recall that $\circ$
    is used to denote string concatenation).

    \item $\mu_y(v)$: a bitstring of length $o(\log n)$.  This bitstring, defined in \pref{transition-code-v}, is designed so that for any vertex $v\in S^+_y\cap S^+_{y+1}$, it is possible to recover $\sigma_{y+1}(x_{y+1}(v))$ given only $\sigma_y(x_y(v))$ and $\mu_y(v)$.

    \item $\varphi(v)$: the colour of $v$ in the proper colouring of $H$ (a bitstring of length $O(\log t)$).

    \item $d_y(x'_y(p_i(v)))$ for each $i\in\{1,\ldots,t+1\}$ (a bitstring of length $O(t\log\log n)$).

    \item $\psi'_{y+b}(p_i(v))$ for each $i\in\{1,\ldots,t+1\}$ and each $b\in\{-1,0,1\}$ (by \psref{small-bags-ii}, this is a bitstring of length $O(t^2\log\log n)$).\label{psi-prime}

    \item $a_y(v)$: this is unmodified from the original scheme. \label{new-ay}

    \item $r_{y+b}(v)$ for each $b\in\{-1,0,1\}$: Three binary strings, each of length at most 1 such that $\sigma_{y+b}(P'_{y+b}(v))=\sigma_{y+b}(x'_{y+b}(v))\mathbin{\circ}r_{y+b}(v)$ for each $b\in\{-1,0,1\}$. \label{new-l8}
\end{enumerate}

\subsection{Adjacency Testing}

Given inputs $\ell'_G(v_1,y_1)$ and $\ell'_G(v_2,y_2)$, the adjacency testing function $A$ for the new labelling scheme uses $\alpha(y_1)$ and $\alpha(y_2)$ to determine which of the following cases applies:
\begin{enumerate}[(a)]
    \item $y:=y_1=y_2$.  For each $i\in\{1,\ldots,t+1\}$, determine if $v_1=p_i(v_2)$ (or \textit{vice-versa}) and, if so, use $a_y(v_2)$ (or $a_y(v_1)$, respectively) to determine if $(v_1,y)$ and $(v_2,y)$ are adjacent in $G$. Specifically, if $v_1=p_i(v_2)$ then one of the bits in $a_y(v_2)$ indicates whether or $(v_1,y_1)$ and $(v_2,y_1)$ are adjacent in $G$. If $v_1\neq p_i(v_2)$ and $v_2\neq p_i(v_1)$ for every $i\in\{1,\ldots,h\}$, then $v_1v_2\not\in E(H)$ and hence $(v_1,y)$ and $(v_2,y)$ are not adjacent in $G\subseteq H\boxtimes P$.

    By \psref{unique-match-ii}, testing if $v_1=p_i(v_2)$, is equivalent to testing if $\sigma_y(x'_y(v_1))=\sigma_y(x'_y(p_i(v_2)))$ and $\psi'_y(v_1)=\psi'_y(p_i(v_2))$. We now show that $\ell'_G(v_1,y_1)$ and $\ell'_G(v_2,y_2)$ contain enough information to perform this test.
    \begin{compactitem}
        \item We can recover $d_y(x'_y(v_1))=d_y(x'_y(p_{\varphi(v_1)}(v_1)))$ and using this, recover $\sigma_y(x'_y(v_1))$ from $\sigma_y(x_y(v_1))$ and $d_y(x'_y(v_1))$.  Next, we can recover $\sigma_y(x'_y(p_i(v_2)))$ from $\sigma_y(P'_y(v_2))$ and $d_y(x'_y(p_i(v)))$. This makes it possible to test if $\sigma_y(x'_y(v_1))=\sigma_y(x'_y(p_i(v_2)))$.
        \item  The colour $\psi'_y(v_1)$ can be recovered from $\ell'_G(v_1,y_1)$ since $\psi'_y(v_1)=\psi'_y(p_{\varphi(v_1)}(v_1))$.  The colour $\psi'_y(p_i(v_2))$ is stored explicitly in $\ell'_G(v_2,y_2)$.  This makes it possible to test if $\psi'_y(v_1)=\psi'_y(p_i(v_2))$.
    \end{compactitem}

    \item $y:=y_2=y_1+1$.  In this case, recover $\sigma_y(x_y(v_1))$ from $\sigma_{y_1}(x_{y_1}(v_1))$ and $\mu_{y_1}(v_1)$.  Next, recover $\sigma_y(P'_y(v))=\sigma_y(x_y(v_1))\mathbin{\circ}r_{y_1+1}(v)$. At this point, the algorithm proceeds exactly as in the previous case except for two small changes:
    \begin{inparaenum}[(i)]
        \item the value of $\psi_{y}'(v_1)=\psi_{y_1+1}'(v_1)$ is obtained from \nlref{psi-prime}; and
        \item in the final step one bit of $a_{y_2}(v_2)$
          \nlref{new-ay} is used to check whether $(v_2,y_2)$ is adjacent to $(v_1,y_1)=(p_i(v_2),y_2-1)$ in $G$.
    \end{inparaenum}
     %

    \item $y:=y_1=y_2+1$. This case is symmetric to the previous case with the roles of $(v_1,y_1)$ and $(v_2,y_2)$ reversed.

    \item $|y_1-y_2|\ge 2$.  In this case $y_1\neq y_2$ and $y_1y_2\not\in E(P)$ and therefore $(v_1,y_1)$ and $(v_2,y_2)$ are not adjacent in $G\subseteq H\boxtimes P$.
\end{enumerate}

\subsection{Bounding the number of edges}

In the preceding sections we have described an adjacency testing
function $A$ such that, for any $n$-vertex graph $G\in \mathcal{Q}_t$, there
exists an injective labelling $\ell'_G:V(G)\to\{0,1\}^{(1+o(1))\log n}$ such
that, for any $v,w\in V(G)$, $A(\ell'_G(v),\ell'_G(w))=1$ if and only
if $vw\in E(G)$.  We define the induced-universal graph $U_n$ as
follows: $V(U_n)$ contains $\ell'_G(v,y)$ for each $n$-vertex graph
$G\in\mathcal{Q}_t$ and each $(v,y)\in V(G)$. Similarly, an edge
$\ell_1\ell_2$ is in $U_n$ if and only if there exists an $n$-vertex
graph $G\in\mathcal{Q}_t$ that contains an edge $vw$ such that
$\ell'_G(v)=\ell_1$ and $\ell'_G(w)=\ell_2$.  As already discussed, it
follows from the correctness of the labelling scheme that $U_n$ is induced-universal for $n$-vertex graphs in $\mathcal{Q}_t$.

We will now show that $U_n$ has $n^{1+o(1)}$ vertices and edges.  This
analysis mostly follows along the same lines as the analysis of
Section~\ref{sec:universal} but is, by necessity, a little less
modular.\footnote{The modular approach used in
  Section~\ref{sec:universal} to describe a universal graph can be
  ruled out by a simple counting argument.  \Cref{sec:universal}
  describes a universal graph for the class $\mathcal{C}$ of
  $n$-vertex subgraphs of $C_d\boxtimes K_\omega\boxtimes P_n$ for
  $d,\omega\in\Theta(\log n)$.  However, the graph $G:=C_{\log
    n-\log\log n}\boxtimes K_{\log n}$ has $n$ vertices and
  $\Theta(n\log^2 n)$ edges, and lies in $\mathcal{F}$.
  The graph $G$ has at least $2^{\Omega(n\log^2 n)}$
  non-isomorphic $n$-vertex subgraphs, and thus $\mathcal{C}$ contains
  at least $2^{\Omega(n\log^2 n)}$
  non-isomorphic  graphs.  On the other hand, any graph with
  $n^{1+o(1)}$ vertices has at most $\binom{n^{1+o(1)}}{n}$ $n$-vertex
  induced subgraphs and since
$\binom{n^{1+o(1)}}{n} < n^{(1+o(1))n} = 2^{(1+o(1))n\log n}\ll
2^{\Omega(n\log^2 n)}$, it follows that a graph on at most
$n^{1+o(1)}$ vertices cannot be induced-universal for $\mathcal{C}$.}
%
%
%
In this analysis, it will be helpful to think of each label $\ell'_G(v,y)$ in the labelling of a graph $G$ as a triple $(x,\bar{y},z)$ where $x=\sigma_y(x_y(v))$, $\bar{y}=\alpha(y)$, and $z$ is the concatenation of the bitstrings (NL3)--(NL8). Of course, since each vertex of $U_n$ is $\ell'_G(v,y)$ for some $n$-vertex $G\in\mathcal{Q}_t$ and some $(v,y)\in V(G)$, we can also treat the vertices of $U_n$ as triples.  Thus, each vertex of $U_n$ is a triple $(x,\bar{y},z)$ where $x$, $\bar{y}$, and $z$ are bitstrings with $|x|+|\bar{y}|\le \log n + \lambda$, $|z|\le \lambda$, and $\lambda\in o(\log n)$.

In the proofs below, whenever we use
Property~\psref{clique-path-ii} explicitly, what we really use is only the weaker
Property~\pref{clique-path}. So let us first explain where
Property~\psref{clique-path-ii}  is really being used and makes a
crucial difference with the previous labelling scheme with parts (L1)--(L7).  Part~(NL8) of $\ell'_G(v,y)$, which is part of $z$, has constant length and makes it possible to recover $\sigma_y(P'_y(v))$ from (NL2), which has length $d_y(x_y(v))$.  With the original Property~\pref{clique-path}, this would not be possible: recovering $\sigma_y(P_y(v))$ from $\sigma_y(x_y(v))$ requires a string of length $|\sigma_y(P_y(v))|-d_y(x_y(v))$.  In this case, the length of (NL8), and hence the length of $z$ could only be bounded by $h(T_y)-d_y(x_y(v))$ which, as shown in \cref{density-lower-bound}, may be $\Omega(\log n)$.

\begin{lemma}\label{vertex-count}
    The graph $U_n$ has $n^{1+o(1)}$ vertices.
\end{lemma}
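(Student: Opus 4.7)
My plan for \cref{vertex-count} is a direct counting argument based on the length bounds already established for the three coordinates of $\ell'_G(v,y)$. The paragraph preceding the lemma states that every vertex of $U_n$ can be written as a triple $(x,\bar y,z)$ of bitstrings with $|x|+|\bar y|\le \log n + \lambda$, $|z|\le \lambda$, where $\lambda\in o(\log n)$. So I would upper-bound $|V(U_n)|$ by the total number of such triples, ignoring whether each one actually arises as a label.

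First I would count ordered pairs $(x,\bar y)$. For each fixed value of $r:=|x|+|\bar y|$, the number of pairs is $(r+1)\cdot 2^r$, since the splitting index determines the lengths $|x|\in\{0,\dots,r\}$ and then both bitstrings are free. Summing over $0\le r\le \log n+\lambda$ gives
\[
  \sum_{r=0}^{\log n+\lambda}(r+1)\cdot 2^r \;\le\; (\log n+\lambda+2)\cdot 2^{\log n+\lambda+1} \;=\; n\cdot 2^{O(\lambda)}\cdot O(\log n).
\]
For each such pair, the number of choices for $z$ is at most $2^{\lambda+1}$. Multiplying the two counts and recalling that $\lambda=O(\sqrt{\log n \cdot \log\log n})$ via \cref{obs:lambda} gives
\[
  |V(U_n)| \;\le\; n\cdot 2^{O(\lambda)}\cdot O(\log n) \;=\; n\cdot 2^{O(\sqrt{\log n\cdot \log\log n})} \;=\; n^{1+o(1)}.
\]

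There is no real obstacle here: all of the work has already been absorbed into the length bounds on the three coordinates, which themselves rest on \psref{clique-path-ii}, \psref{small-bags-ii}, the height bound in property~\eqref{bound_height} of \cref{lem:tree_sequence}, and \cref{obs:lambda}. Everything else is a routine geometric-series estimate. The only point worth double-checking while writing up is that all of parts (NL3)--(NL8) together indeed contribute only $|z|=o(\log n)$ bits --- the nontrivial contributions being $|\mu_y(v)|=o(\log n)$ from \pref{transition-code-v} and $O(t^2\log\log n)$ from (NL6), both absorbed into $\lambda$ for fixed $t$.
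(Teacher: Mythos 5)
Your proof is correct and is essentially identical to the paper's: both parameterize vertices as triples $(x,\bar y,z)$, count pairs $(x,\bar y)$ via $\sum_{r=0}^{\log n+\lambda}(r+1)2^r = n^{1+o(1)}$, and multiply by the $n^{o(1)}$ choices for $z$. The closing remark about why $|z|=o(\log n)$ is a nice sanity check but mirrors what the paper already establishes via (NL3)--(NL8) and \psref{small-bags-ii}.
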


\begin{proof}
    Consider a vertex $(x,\bar{y},z)$ of $U_n$. The pair $(x,\bar{y})$ consists of two bitstrings of total length $r := |x|+|\bar{y}|\le\log n + \lambda$.  For a fixed $r$, the number of such $(x,\bar{y})$ is $(r+1)2^{r}$. Therefore, the number of such $(x,\bar{y})$ over all choices of $r$ is
    \[
        \sum_{r=0}^{\log n + \lambda} (r+1)2^r \leq 2^{\log n + \lambda+1}(\log n+\lambda + 1) = n^{1+o(1)} \enspace .
    \]
    The third coordinate, $z$ is a bitstring of length at most $\lambda$. The number of such bitstrings is $2^{\lambda+1}-1=n^{o(1)}$.  Therefore, the number of choices for $(x,\bar{y},z)$ is $n^{1+o(1)}\cdot n^{o(1)}=n^{1+o(1)}$.
\end{proof}

As in Section~\ref{sec:universal}, we distinguish between two kinds of edges in $U_n$.  An edge with endpoints $(x_1,\bar{y}_1,z_1)$ and $(x_2,\bar{y}_2,z_2)$ is a \emph{Type~1} edge if $\bar{y}_1=\bar{y}_2$ and is a \emph{Type~2} edge otherwise.  We count Type~1 and Type~2 edges separately.

\begin{lemma}\label{flat-edges}
    The graph $U_n$ contains $n^{1+o(1)}$ Type~1 edges.
\end{lemma}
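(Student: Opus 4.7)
The plan is to orient each Type~1 edge of $U_n$ according to the $H$-parent/$H$-child relation in the underlying graph, and then count oriented edges using the structural constraint from Property~\psref{clique-path-ii}. Fix any Type~1 edge between $(x_1, \bar{y}, z_1)$ and $(x_2, \bar{y}, z_2)$. By the definition of $U_n$'s edges, there exist a graph $G \in \mathcal{Q}_t$ (with underlying $t$-tree $H$ and path $P$), a row $y$ with $\alpha(y) = \bar{y}$, and vertices $v_1, v_2$ of $G$ at row $y$ with $(v_1, y)(v_2, y) \in E(G)$ realizing both labels; necessarily $v_1 v_2 \in E(H)$. The adjacency test in case~(a) forces $v_1 = p_i(v_2)$ (or the symmetric version) for some $i \in \{1, \ldots, t+1\}$. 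I orient the edge from the $H$-child label to the $H$-parent label and tag it with $i$; each unordered Type~1 edge yields at most two such oriented triples.

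For an oriented triple $\bigl((x_2, \bar{y}, z_2) \to (x_1, \bar{y}, z_1), i\bigr)$, Property~\psref{clique-path-ii} implies that $x'_y(v_1) = x'_y(p_i(v_2))$ lies on $P'_y(v_2)$, so the anchor $w_1 := \sigma_y(x'_y(v_1))$ is a prefix of $\sigma_y(x'_y(v_2)) \mathbin{\circ} r_y(v_2)$ and has length at most $|x_2|+1$. Crucially, given $(x_2, \bar{y}, z_2)$ and $i$, the length $d_y(x'_y(p_i(v_2)))$ is read off from the (NL5) entry of $z_2$ at position $i$, so $w_1$ is completely determined as the corresponding prefix of $x_2$; the colour $\varphi(v_1) = \varphi(p_i(v_2))$ and the value $\psi'_y(v_1) = \psi'_y(p_i(v_2))$ are similarly determined from (NL4) and \nlref{psi-prime} of $z_2$. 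The parent label $(x_1, \bar{y}, z_1)$ is therefore constrained: $x_1$ must extend $w_1$, and $z_1$ must encode the prescribed colour, anchor-depth, and $\psi'$-value.

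Bounding the Type~1 edge count thus reduces to estimating
\[
\sum_{(x_2, \bar{y}, z_2) \in V(U_n)} \sum_{i=1}^{t+1} \bigl|\{\text{compatible parent labels in } V(U_n)\}\bigr|.
\]
The main obstacle is evaluating this sum to obtain $n^{1+o(1)}$: a direct enumeration of all extensions of $w_1$ into $x_1$, up to length $\log n + \lambda(n) - |\bar{y}|$, is exponential in $|x_1|-|w_1|$ and naively only gives $n^{2+o(1)}$. The fix is to swap the order of summation and parametrise by the anchor bitstring $w_1$; each additional bit of $w_1$ roughly halves the number of compatible children (since both the (NL5) entry of $z_2$ at position $i$ and the length-$|w_1|$ prefix of $x_2$ become more restrictive) while simultaneously restricting the number of parent extensions by the same factor, so the product telescopes. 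Combined with the $n^{o(1)}$ freedom left in the remaining bits of $z_1, z_2$ (since $|z| \le \lambda(n) = O(\sqrt{\log n \log \log n})$) and the prefix-sum identity $\sum_{r=0}^{m}(r+1)\,2^r = O(m\,2^m)$ already used in the proof of \cref{vertex-count}, the total evaluates to $n^{1+o(1)}$.
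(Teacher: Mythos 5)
Your proposal takes a genuinely different tack from the paper, but the key counting step has a gap that does not seem repairable as stated.

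The paper's proof orients each Type~1 edge according to which of the two $x$-coordinates is longer: since both $x_y(v_1)$ and $x_y(v_2)$ lie on the path $P_y(v_2)$ (Property~\pref{clique-path}), the strings $x_1$ and $x_2$ are prefix-comparable, and directing the edge away from the endpoint with the longer $x$ gives an out-degree bound of $(|x_1|+1)\cdot 2^{\lambda+1}=n^{o(1)}$. You instead orient from the $H$-child label $(x_2,\bar y,z_2)$ to the $H$-parent label $(x_1,\bar y,z_1)$. These orientations do not coincide: the fact that $v_1$ is the $H$-parent of $v_2$ only controls the depth of $x'_y(v_1)$ (via \psref{clique-path-ii}), not the depth of $x_y(v_1)$, so the parent's stored string $x_1=\sigma_y(x_y(v_1))$ can be either shorter or much longer than $x_2$. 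As a result, fixing the child label determines only the short prefix $w_1=\sigma_y(x'_y(v_1))$ of $x_1$, of length at most $|x_2|+1$, and the number of extensions of $w_1$ to an $x_1$ of length up to $\log n+\lambda-|\bar y|$ can be as large as $\Theta(n/2^{|w_1|})$. This is exactly the problem the paper's orientation sidesteps.

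Your ``telescoping'' fix does not close this gap. Parametrising by $w_1$, the number of compatible child labels is (roughly) $2^{M-|w_1|}\cdot n^{o(1)}$ where $M:=\log n+\lambda-|\bar y|$, and the number of compatible parent labels is also (roughly) $2^{M-|w_1|}\cdot n^{o(1)}$. Both quantities shrink as $|w_1|$ grows, as you observe, but the number of potential child--parent pairs is the \emph{product} of the two, namely $2^{2M-2|w_1|}\cdot n^{o(1)}$. Summing this over the $2^{|w_1|}$ choices of $w_1$ of a given length and then over lengths, the dominant term occurs at $|w_1|=0$ and is $\approx 2^{2M}\cdot n^{o(1)}$; summing further over $\bar y$ gives $n^{2+o(1)}$, not $n^{1+o(1)}$. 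Nothing telescopes because $w_1$ constrains the child and the parent independently, so the two reductions multiply rather than cancel. To make the argument work you would need the out-degree of the orientation to be $n^{o(1)}$, which is precisely what the paper's ``longer $x$ points to shorter $x$'' orientation achieves in a single line.

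Concretely: replace your orientation with the paper's. Fix the endpoint $\ell_1=(x_1,\bar y,z_1)$ whose $x$-coordinate is the longer of the two (by \pref{clique-path} one of $x_1,x_2$ is a prefix of the other, so this is well-defined). Then $x_2$ is one of the $|x_1|+1$ prefixes of $x_1$, $\bar y$ is already determined, and $z_2$ ranges over at most $2^{\lambda+1}$ bitstrings, giving out-degree $n^{o(1)}$ and total $|V(U_n)|\cdot n^{o(1)}=n^{1+o(1)}$ by \cref{vertex-count}.
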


\begin{proof}
    Let $(x_1,\bar{y},z_1)(x_2,\bar{y},z_2)$ be a Type~1 edge of $U_n$
    and, for each $i\in\{1,2\}$, let $\ell_i:=(x_i,\bar{y},z_i)$.  Since $\ell_1\ell_2$ lies in $E(U_n)$, there  exists some $t$-tree
    $H$, some path $P$, some $n$-vertex subgraph $G$ of $H\boxtimes
    P$, and some edge $(v_1,y_1)(v_2,y_2)$ of $G$ such that
    $\ell_1=\ell'_G(v_1,y_1)$ and $\ell_2=\ell'_G(v_2,y_2)$. For this graph $G$, $\alpha(y_1)=\bar{y}=\alpha(y_2)$ which implies that $y:=y_1=y_2$ for some integer $y$.

    The existence of the edge $(v_1,y)(v_2,y)$ in $G$ implies the existence of the edge $v_1v_2$ in $H$.  Therefore, $v_1$ is an $H$-parent of $v_2$, or vice-versa. Property~\psref{clique-path-ii} implies that one of $x_1=\sigma_y(x_y(v_1))$ or $x_2=\sigma_y(x_y(v_2))$ is a prefix of the other.  Assume, without loss of generality, that $x_2$ is a prefix of $x_1$ and direct the edge $\ell_1\ell_2$ away from $\ell_1$.  For a fixed $(x_1,\bar{y},z_1)$, the number of $x_2$ that are a prefix of $x_1$ is most $|x_1|+1\le\log n+\lambda+1=n^{o(1)}$. For a fixed $(x_1,\bar{y},z_1)$, the number of $(x_2,\bar{y},z_2)$ in which $x_2$ is a prefix of $x_1$ is at most $n^{o(1)}\cdot 2^{\lambda+1}=n^{o(1)}$.

    Therefore, each vertex $(x_1,\bar{y},z_1)$ of $U_n$ has at most $n^{o(1)}$ Type~1 edges directed away from it.  Therefore the number of Type~1 edges in $U_n$ is at most $|V(U_n)|\cdot n^{o(1)}=n^{1+o(1)}$, where the upper bound on $|V(U_n)|$ comes from \cref{vertex-count}.
\end{proof}

\begin{lemma}\label{vertical-edges}
    The graph $U_n$ contains at most $n^{1+o(1)}$ Type~2 edges.
\end{lemma}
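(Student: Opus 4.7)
The plan is to adapt the edge-counting argument of \cref{sec:universal} used for the universal graph $G_n$. Orient each Type~2 edge $\ell_1\ell_2$ from the endpoint corresponding to the smaller time coordinate to the endpoint corresponding to the larger one, and bound the out-degree of every vertex in this orientation. Together with \cref{vertex-count}, this will give the desired bound.

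So consider a Type~2 edge $\ell_1\ell_2$ oriented as above, realized by some $(v_1, y_1)(v_2, y_2) \in E(G)$ with $y_2 = y_1 + 1$. Either $v_1 = v_2$ or $v_1v_2 \in E(H)$, so $\{v_1, v_2\} \subseteq C_w$ for some $w \in \{v_1, v_2\}$, and $w \in L_{y_1-1} \cup L_{y_1} \cup L_{y_1+1}$. Thus \pref{clique-path} provides a root-to-leaf path $P_{y_1}(w)$ in $T_{y_1}$ that contains both $x_{y_1}(v_1)$ and $x_{y_1}(v_2)$, and therefore $x_1 = \sigma_{y_1}(x_{y_1}(v_1))$ and $x'_2 := \sigma_{y_1}(x_{y_1}(v_2))$ satisfy $x_1 \comparable x'_2$. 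Next, $v_2 \in L_{y_1+1} \subseteq S_{y_1+1} \subseteq S^+_{y_1} \cap S^+_{y_1+1}$, so \pref{transition-code-v} (with the length bound from \cref{obs:lambda}) yields a bitstring $\mu$ of length at most $\lambda$ with $x_2 = J(x'_2, \mu)$. Finally, $\bar{y}_2 \in L(\bar{y}_1)$, and $|L(\bar{y}_1)| = O(\log n)$ by the definition of (NL1).

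These conditions on $(\ell_1, \ell_2)$ directly parallel the edge rules defining $G_n$ in \cref{sec:universal_graph}: a prefix-comparable pair $(x_1, x'_2)$ whose length is bounded by $\h(T_{y_1}) \leq \log n + \lambda - |\bar{y}_1|$ (using \pref{tree-height} and (NL1)), paired with a short bitstring $\mu$ encoding $x_2 = J(x'_2, \mu)$ and a short $z_2$. Imitating the enumeration in the proof of \cref{lem:Gn_bound}, the out-degree of $\ell_1 = (x_1,\bar{y}_1,z_1)$ is at most
\[
    O(\log n) \cdot \bigl( |x_1| + 1 + 2^{\log n + \lambda - |\bar{y}_1| - |x_1| + 1} \bigr) \cdot 2^{\lambda+1} \cdot 2^{\lambda+1},
\]
where the four factors count choices for $\bar{y}_2$, for $x'_2$ (either a prefix of $x_1$ or a strict extension of total length at most $\h(T_{y_1})$), for $\mu$, and for $z_2$. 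Summing this bound over all $\ell_1\in V(U_n)$ grouped by $r := |x_1| + |\bar{y}_1|$ proceeds by the same calculation as in \cref{lem:Gn_bound}, and yields a total of at most $n \cdot 2^{O(\sqrt{\log n \log\log n})} = n^{1+o(1)}$ Type~2 edges.

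The main conceptual obstacle is the first step: because (NL2) stores the unadjusted signature $\sigma_{y_1}(x_{y_1}(v))$ rather than the Fixup-adjusted $\sigma_{y_1}(x'_{y_1}(v))$, one cannot simply quote \psref{clique-path-ii} to deduce $x_1 \comparable x'_2$. This is resolved by appealing instead to the weaker property \pref{clique-path}, which already guarantees that the unadjusted min-depth nodes $x_{y_1}(w)$ for $w \in C_v$ lie on a common root-to-leaf path of $T_{y_1}$. Once this comparability is in hand, the rest of the argument is a direct transplant of the universal-graph edge count from \cref{sec:universal}.
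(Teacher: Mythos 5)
Your proof is correct and follows essentially the same strategy as the paper's: orient Type~2 edges by time, count outgoing edges per vertex, and sum grouped by $r=|x_1|+|\bar y_1|$. The one substantive difference is that the paper splits into the cases $v_1=v_2$ and $v_1v_2\in E(H)$, while you handle them uniformly by noting that $\{v_1,v_2\}\subseteq C_w$ for some $w\in\{v_1,v_2\}\subseteq L_{y_1}\cup L_{y_1+1}$ and invoking \pref{clique-path} to get $x_1\comparable x_2'$ in all cases (when $v_1=v_2$ this degenerates to $x_2'=x_1$). The paper's split buys a slightly tighter count in the $v_1=v_2$ case (there $x_2$ is determined by $z_1$, so the factor $2^{\lambda+1}$ for $\mu$ disappears), but your unified treatment over-counts by only an $n^{o(1)}$ factor, which is harmless. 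Your explicit observation that \pref{clique-path} rather than \psref{clique-path-ii} is what is actually invoked is also consistent with the paper, which makes the same remark before \cref{vertex-count}. One small slip: you cite \cref{obs:lambda} for the length of $\mu$, but that observation bounds $|\nu_y(z)|$ in \cref{lem:tree_sequence}; the length bound $o(\log n)$ for the transition code $\mu_y(v)$ is what \pref{transition-code-v} asserts directly. This does not affect the argument since any $o(\log n)$ bound suffices for $n^{o(1)}$.
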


\begin{proof}
    Let $(x_1,\bar{y}_1,z_1)(x_2,\bar{y}_2,z_2)$ be a Type~2 edge of
    $U_n$ and, for each $i\in\{1,2\}$, let
    $\ell_i:=(x_i,\bar{y}_i,z_i)$.  Since $\ell_1\ell_2$ lies in $E(U_n)$, there exists some $t$-tree $H$, some path $P$, some $n$-vertex subgraph $G$ of $H\boxtimes P$, and some edge $(v_1,y_1)(v_2,y_2)$ of $G$ such that $\ell_1=\ell'_G(v_1,y_1)$ and $\ell_2=\ell'_G(v_2,y_2)$.

    Since $\alpha(y_1)=\bar{y}_1\neq \bar{y}_2=\alpha(y_2)$, we have $y_1\neq y_2$. The existence of the edge $(v_1,y_1)(v_2,y_2)$ in $G$ therefore implies that $y_1y_2$ is an edge of $P$ so that (without loss of generality) $y_1=y$ and $y_2=y+1$ for some $y\in\{1,\ldots,h-1\}$.  Now, $\bar{y}_1=\alpha(y)$ and $\bar{y}_2=\alpha(y+1)$.  Specifically $\bar{y}_2\in L(\bar{y}_1)$ (see \cref{labels-i}) and $|L(\bar{y}_1)|\in O(\log n)$.  Therefore, for a fixed $\bar{y}_1$, the number of possible choices for $\bar{y}_2$ is $O(\log n)$.

    The existence of the edge $(v_1,y_1)(v_2,y_2)$ in $G$ implies that $v_1=v_2$ or that $v_1v_2\in E(H)$.
    \begin{enumerate}
        \item If $v_1=v_2$, then $x_2=J(x_1,\mu_y(v_1))$.  Since
          $\mu_y(v_1)$ is included as part of $z_1$ the condition
          $v_1=v_2$ implies that fixing
          $(x_1,\bar{y}_1,z_1)=\ell'_G(v_1,y_1)$ fixes the value of
          $x_2$.  We have already established that, for a fixed
          $\bar{y}_1$, the number of options for $\bar{y}_2$ is
          $O(\log n)$.  Finally, $z_2$ is a bitstring of length at
          most $\lambda$, so the number of options for $z_2$ is at
          most $2^{\lambda+1}-1=n^{o(1)}$.  Therefore, for a fixed
          $(x_1,\bar{y}_1,z_1)$ the number of options for
          $(x_2,\bar{y}_2,z_2)$ in this case is at most
        \[
            1\cdot O(\log n) \cdot n^{o(1)} = n^{o(1)} \enspace .
        \]
        By \cref{vertex-count}, the number of choices for $(x_1,\bar{y}_1,z_1)$ is at most $n^{1+o(1)}$.  Therefore, the number of Type~2 edges in $U_n$ contributed by edges $(v_1,y_1)(v_2,y_2)$ in $n$-vertex graphs $G\in\mathcal{Q}_t$ where $v_1=v_2$ is at most $n^{1+o(1)}\cdot n^{o(1)} = n^{1+o(1)}$.

        \item If $v_1v_2\in E(H)$ then recall the definition of
          $S^+_y$,  which implies that $v_1,v_2\in S^+_y\cap
          S^+_{y+1}$.  Since $v_1v_2\in E(H)$, at least one of $v_1$
          or $v_2$ is an $H$-parent of the other. Since $(v_2,y+1)\in
          V(G)$, $v_2\in S^+_y$ so $x_y(v_2)$ is defined. By
          \psref{clique-path-ii}, one of $x_2':=\sigma_y(x_y(v_2))$ or
          $x_1=\sigma_y(x_y(v_1))$ is a prefix of the other.  By
          \pref{tree-height}, $|x_2'|\le \h(T_y)\le \log|S^+_y| +
          \lambda \le\log n+\lambda -|\bar{y}_1|$, where the final
          inequality comes from the property of $\alpha$ in (L1) and
          (NL1). Therefore, for a fixed $(x_1,\bar{y}_1,z_1)$, the
          number of choices for $x_2'$ is at most $|x_1|+1+2^{\log n +
            \lambda - |\bar{y}_1|- |x_1|}=n^{1+o(1)}\cdot 2^{-|x_1|-|\bar{y}_1|}$.

        Since $x_2'=x_y(v_2)$, by \pref{transition-code-v}, there exists a bitstring $\mu_y(v_2)$ of length $o(\log n)$ such that $J(x_2',\mu_y(v_2))=\sigma_{y+1}(x_{y+1}(v_2))=x_2$.  Therefore, for a fixed $x_2'$, the number of choices for $x_2$ is at most $2^{o(\log n)}=n^{o(1)}$.  Thus, for a fixed $(x_1,\bar{y}_1,z_1)$, the number of choices for $(x_2,\bar{y}_2,z_2)$ is at most
        \[
            n^{1+o(1)}2^{-|x_1|-|\bar{y}_1|} \cdot O(\log n) \cdot
            n^{o(1)} = n^{1+o(1)}\cdot 2^{-|x_1|-|\bar{y}_1|} \enspace .
        \]
        where the first  factor counts the number of options for
        $x_2$, the second the number of options for $\bar{y}_2\in L(\bar{y}_1)$, and the third the number of options for $z_2$.  For fixed $r:=|x_1|+|\bar{y}_1|$, the number of choices for $(x_1,\bar{y}_1)$ is $(r+1)\cdot 2^{r}$.  Therefore, for a fixed $r$, the number of choices for $(x_1,\bar{y}_1,z_1)$ is $(r+1)\cdot 2^r \cdot(2^{\lambda+1}-1)=2^r\cdot n^{o(1)}$.  We can now sum over $r$ to determine that the total number of Type~2 edges contributed by some edge $(v_1,y_1)(v_2,y_2)$ in some $n$-vertex graph $G\in \mathcal{Q}_t$ with $v_1\neq v_2$ is at most
        \[
            \sum_{r=0}^{\log n+\lambda} 2^r\cdot n^{1+o(1)}\cdot 2^{-r} = n^{1+o(1)}(\log n+\lambda+1) = n^{1+o(1)}.
        \]
    \end{enumerate}
    Each Type~2 edge $(x_1,\bar{y}_1,z_1)(x_2,\bar{y}_2,z_2)$ of $U_n$
    is contributed by some edge $(v_1,y_1)(v_2,y_2)$ in some $n$-vertex graph
    $G\in\mathcal{Q}_t$ such that either $v_1=v_2$ or $v_1\neq v_2$.
    Therefore, the two cases analyzed above establish that $U_n$ has
    $n^{1+o(1)}$ Type~2 edges.
\end{proof}

A more careful handling of $n^{o(1)}$ factors in the proofs of \cref{vertex-count,flat-edges,vertical-edges} gives an upper bound of
\[
    n\cdot 2^{O(\sqrt{\log n\log\log n})}\cdot (\log n)^{O(t^2)}
\]
on the number of edges and vertices in $U_n$ and establishes \cref{thm:main_induced}.  The bottleneck in the analysis is the value $\lambda$ which represents the tradeoff between the lengths of the transition codes $\mu_y$ and the excess height of trees $T_1,\ldots,T_h$ (this tradeoff is captured by the parameter $k$ in \cite{AdjacencyLabellingPlanarJACM}).  In particular, the optimal tradeoff is obtained when $|\mu_y(v)|\in O(\sqrt{\log n\log\log n})$ and $\h(T_y)\le \log |S^+_y| + O(\sqrt{\log n\log\log n})$.  The $(\log n)^{O(t^2)}$ factor comes from storing the colours $\psi'_{y+b}(p_i(v))$ in each for each $i\in\{1,\ldots,t+1\}$, since each colour comes from a set of size $(\log n)^{O(t)}$.

We remark that our proof includes within it a labelling scheme for graphs of treewidth at most $t$.  Analyzing this labelling scheme separately shows that it gives rise to a graph $H_n$ that has $n(\log n)^{O(t^2)}$ edges and vertices, and contains each $n$-vertex subgraph of treewidth at most $t$ as an induced subgraph.

\section{Conclusion}

Our construction of universal graphs is based on the product structure theorem of~\cite{dujmovic.joret.ea:planarJACM}, which does not apply to every proper minor-closed classes of graphs, only to apex-minor-free classes. A natural problem is thus to construct universal graphs with $o(n^{3/2})$ edges for $n$-vertex graphs from an arbitrary proper minor-closed class.

\section*{Note added in proof}

Very recently, Gawrychowski and Janczewski~\cite{GJ22} showed that the use of bulk tree sequences in~\cite{AdjacencyLabellingPlanarJACM} could be replaced with a simpler approach based on B-trees, while leaving the rest of the proof essentially unchanged.
This simplifies the data-structure part of the proof in~\cite{AdjacencyLabellingPlanarJACM} and also gives a slightly improved bound of $n\cdot 2^{O(\sqrt{\log n})}$ on the number of vertices in the resulting induced-universal graph for $n$-vertex planar graphs, compared to $n\cdot 2^{O(\sqrt{\log n \cdot \log \log n})}$ in~\cite{AdjacencyLabellingPlanarJACM}.
Given our use of the bulk tree sequences from~\cite{AdjacencyLabellingPlanarJACM} as a `black box' in Sections~\ref{sec:prel} and \ref{sec:universal}, they can also be replaced with the approach based on B-trees from~\cite{GJ22} in these proofs.
This reduces the factor $n\cdot 2^{O(\sqrt{\log n \cdot \log \log n})}$ in Theorems~\ref{thm:planar} and \ref{thm:main} to $n\cdot 2^{O(\sqrt{\log n})}$.
On the other hand, the proofs in Section~\ref{sec:induced-universal} do depend on the inner workings of bulk tree sequences, and as such it is not immediately clear whether they could be replaced with B-trees.
As in~\cite{GJ22}, we leave this as an open problem.

\section*{Acknowledgment}

We thank Noga Alon for providing the details of the argument used
in~\cite{ACKRRS} to decrease the number of vertices in a universal
graph.
We are grateful to an anonymous referee for their helpful comments on an earlier version of the paper.

\bibliographystyle{plainurlnat}
\bibliography{bibliography}

\begin{thebibliography}{19}
\providecommand{\natexlab}[1]{#1}
\providecommand{\url}[1]{\texttt{#1}}
\providecommand{\urlprefix}{URL }
\expandafter\ifx\csname urlstyle\endcsname\relax
  \providecommand{\doi}[1]{\href{https://dx.doi.org/#1}{\nolinkurl{doi:#1}}}\else
  \providecommand{\doi}[1]{\href{https://dx.doi.org/#1}{\nolinkurl{doi:#1}}}\fi
\providecommand{\eprint}[2][]{\url{#2}}

\bibitem[{Alon et~al.(2001)Alon, Capalbo, Kohayakawa, R{\"{o}}dl, Rucinski, and
  Szemer{\'{e}}di}]{ACKRRS}
Noga Alon, Michael~R. Capalbo, Yoshiharu Kohayakawa, Vojtech R{\"{o}}dl,
  Andrzej Rucinski, and Endre Szemer{\'{e}}di.
\newblock Near-optimum universal graphs for graphs with bounded degrees.
\newblock In Michel~X. Goemans, Klaus Jansen, Jos{\'{e}} D.~P. Rolim, and Luca
  Trevisan, editors, \emph{Approximation, Randomization and Combinatorial
  Optimization: Algorithms and Techniques, 4th International Workshop on
  Approximation Algorithms for Combinatorial Optimization Problems, {APPROX}
  2001 and 5th International Workshop on Randomization and Approximation
  Techniques in Computer Science, {RANDOM} 2001 Berkeley, CA, USA, August
  18-20, 2001, Proceedings}, volume 2129 of \emph{Lecture Notes in Computer
  Science}, pages 170--180. Springer, 2001.
\newblock \doi{10.1007/3-540-44666-4\_20}.

\bibitem[{Babai et~al.(1982)Babai, Chung, Erd\H{o}s, Graham, and
  Spencer}]{babai.chung.ea}
Laszlo Babai, Fan R.~K. Chung, Paul Erd\H{o}s, Ronald~L. Graham, and Joel~H.
  Spencer.
\newblock On graphs which contain all sparse graphs.
\newblock In \emph{Theory and practice of combinatorics. A collection of
  articles honoring Anton Kotzig on the occasion of his sixtieth birthday},
  pages 21--26. Elsevier, 1982.

\bibitem[{Bhatt et~al.(1988)Bhatt, Chung, Hong, and
  Rosenberg}]{bhatt1988optimal}
Sandeep Bhatt, Fan R.~K. Chung, Jia-Wei Hong, and Arnold Rosenberg.
\newblock Optimal simulations by butterfly networks.
\newblock In \emph{Proceedings of the twentieth annual ACM symposium on Theory
  of computing}, pages 192--204. 1988.

\bibitem[{Bhatt et~al.(1986)Bhatt, Chung, Leighton, and
  Rosenberg}]{bhatt1986optimal}
Sandeep~N. Bhatt, Fan R.~K. Chung, Frank~T. Leighton, and Arnold~L. Rosenberg.
\newblock Optimal simulations of tree machines (preliminary version).
\newblock In \emph{27th Annual Symposium on Foundations of Computer Science,
  Toronto, Canada, 27-29 October 1986}, pages 274--282. {IEEE} Computer
  Society, 1986.
\newblock \doi{10.1109/SFCS.1986.38}.

\bibitem[{Bhatt et~al.(1989)Bhatt, Chung, Leighton, and
  Rosenberg}]{bhatt.chung.ea}
Sandeep~N. Bhatt, Fan R.~K. Chung, Frank~T. Leighton, and Arnold~L. Rosenberg.
\newblock Universal graphs for bounded-degree trees and planar graphs.
\newblock \emph{SIAM Journal on Discrete Mathematics}, 2(2):145--155, 1989.

\bibitem[{Bose et~al.(2022)Bose, Morin, and
  Odak}]{bose_et_al:LIPIcs.SWAT.2022.19}
Prosenjit Bose, Pat Morin, and Saeed Odak.
\newblock {An Optimal Algorithm for Product Structure in Planar Graphs}.
\newblock In Artur Czumaj and Qin Xin, editors, \emph{18th Scandinavian
  Symposium and Workshops on Algorithm Theory (SWAT 2022)}, volume 227 of
  \emph{Leibniz International Proceedings in Informatics (LIPIcs)}, pages
  19:1--19:14. Schloss Dagstuhl -- Leibniz-Zentrum f{\"u}r Informatik,
  Dagstuhl, Germany, 2022.
\newblock \doi{10.4230/LIPIcs.SWAT.2022.19}.

\bibitem[{Capalbo(2002)}]{Capalbo02}
Michael~R. Capalbo.
\newblock Small universal graphs for bounded-degree planar graphs.
\newblock \emph{Combinatorica}, 22(3):345--359, 2002.
\newblock \doi{10.1007/s004930200017}.

\bibitem[{Chung(1990)}]{chung1990-separator}
Fan R.~K. Chung.
\newblock Separator theorems and their applications.
\newblock In \emph{{Paths, flows, and VLSI-layout, Proc. Meet., Bonn/Ger. 1988,
  Algorithms Comb. 9, 17-34 (1990)}}, pages 17--34. 1990.

\bibitem[{Chung and Graham(1983)}]{chung.graham}
Fan R.~K. Chung and Ronald~L. Graham.
\newblock On universal graphs for spanning trees.
\newblock \emph{Journal of the London Mathematical Society}, s2-27(2):203--211,
  1983.
\newblock \doi{10.1112/jlms/s2-27.2.203}.

\bibitem[{Dujmovi\'c et~al.(2020{\natexlab{a}})Dujmovi\'c, Esperet, Gavoille,
  Joret, Micek, and Morin}]{AdjacencyLabellingPlanarFOCS}
Vida Dujmovi\'c, Louis Esperet, Cyril Gavoille, Gwena\"el Joret, Piotr Micek,
  and Pat Morin.
\newblock Adjacency labelling for planar graphs (and beyond).
\newblock In \emph{61th {IEEE} Annual Symposium on Foundations of Computer
  Science, {FOCS} 2020, Virtual Conference, November 16-19, 2020}.
  2020{\natexlab{a}}.

\bibitem[{Dujmovi\'c et~al.(2021)Dujmovi\'c, Esperet, Gavoille, Joret, Micek,
  and Morin}]{AdjacencyLabellingPlanarJACM}
Vida Dujmovi\'c, Louis Esperet, Cyril Gavoille, Gwena\"el Joret, Piotr Micek,
  and Pat Morin.
\newblock Adjacency labelling for planar graphs (and beyond).
\newblock \emph{Journal of the ACM}, 68(6):Article 42, 2021.
\newblock
  \href{https://arxiv.org/abs/2003.04280}{\nolinkurl{arXiv:2003.04280}}.

\bibitem[{Dujmovi\'c et~al.(2020{\natexlab{b}})Dujmovi\'c, Joret, Micek, Morin,
  Ueckerdt, and Wood}]{dujmovic.joret.ea:planarJACM}
Vida Dujmovi\'c, Gwena\"el Joret, Piotr Micek, Pat Morin, Torsten Ueckerdt, and
  David~R. Wood.
\newblock Planar graphs have bounded queue-number.
\newblock \emph{Journal of the ACM}, 67(4):Article 22, 2020{\natexlab{b}}.
\newblock
  \href{https://arxiv.org/abs/1904.04791}{\nolinkurl{arXiv:1904.04791}}.

\bibitem[{Dujmović et~al.(2023)Dujmović, Morin, and
  Wood}]{dujmovic.morin.ea:structure}
Vida Dujmović, Pat Morin, and David~R. Wood.
\newblock Graph product structure for non-minor-closed classes.
\newblock \emph{Journal of Combinatorial Theory, Series B}, 162:34--67, 2023.
\newblock \doi{10.1016/j.jctb.2023.03.004}.

\bibitem[{Gawrychowski and Janczewski(2022)}]{GJ22}
Pawe{\l} Gawrychowski and Wojciech Janczewski.
\newblock Simpler adjacency labeling for planar graphs with {B}-trees.
\newblock In Karl Bringmann and Timothy Chan, editors, \emph{5th Symposium on
  Simplicity in Algorithms, SOSA@SODA 2022, Virtual Conference, January 10-11,
  2022}, pages 24--36. {SIAM}, 2022.
\newblock \doi{10.1137/1.9781611977066.3}.

\bibitem[{Kannan et~al.(1992)Kannan, Naor, and
  Rudich}]{kannan.naor.ea:implicit}
Sampath Kannan, Moni Naor, and Steven Rudich.
\newblock Implicit representation of graphs.
\newblock \emph{{SIAM} J. Discrete Math.}, 5(4):596--603, 1992.
\newblock \doi{10.1137/0405049}.

\bibitem[{Pilipczuk and Siebertz(2021)}]{pilipczuk.siebertz:polynomial-journal}
Micha{\l} Pilipczuk and Sebastian Siebertz.
\newblock Polynomial bounds for centered colorings on proper minor-closed graph
  classes.
\newblock \emph{Journal of Combinatorial Theory, Series {B}}, 151:111--147,
  2021.
\newblock \doi{10.1016/j.jctb.2021.06.002}.

\bibitem[{Scheffler(1992)}]{scheffler:optimal}
Petra Scheffler.
\newblock Optimal embedding of a tree into an interval graph in linear time.
\newblock In Jaroslav Ne{\v{s}}et{\v{r}}il and Miroslav Fiedler, editors,
  \emph{Fourth Czechoslovakian Symposium on Combinatorics, Graphs and
  Complexity}, volume~51 of \emph{Annals of Discrete Mathematics}, pages
  287--291. Elsevier, 1992.
\newblock \doi{10.1016/S0167-5060(08)70644-7}.

\bibitem[{Ueckerdt et~al.(2022)Ueckerdt, Wood, and Yi}]{UWY22}
Torsten Ueckerdt, David~R. Wood, and Wendy Yi.
\newblock An improved planar graph product structure theorem.
\newblock \emph{Electronic Journal of Combinatorics}, 29(2), 2022.
\newblock \doi{10.37236/10614}.

\bibitem[{Valiant(1981)}]{Val81}
Leslie~G. Valiant.
\newblock Universality considerations in {VLSI} circuits.
\newblock \emph{IEEE Transactions on Computers}, 30(2):135--140, 1981.

\end{thebibliography}

\appendix

\section{Proof of Lemma~\ref{lem:alon}}



Let $N$ be the smallest integer divisible by $k$ such that $N\ge N_0$.
Note that we have $N\le N_0+k$. Consider
an integer $d=\Theta(\tfrac1\epsilon \log k)$ (whose precise value will be
determined later).
Let $H$ be a bipartite graph with parts $V$ of size $N$, and $U$ of
size  $N/k$ in which each vertex of $V$ is connected to $d$ random
vertices of $U$ (with replacement, so it might be the case that some
vertices of $V$ have less than $d$ neighbors).

\begin{claim}\label{cl:hall}
The following holds with positive probability: for each subset $X$ of $V$ with $|X|\le n$, $|N_H(X)|\ge |X|$.
\end{claim}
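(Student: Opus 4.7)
The plan is to prove the contrapositive via the Expander Mixing Lemma (EML) applied to the underlying $(N,d,\lambda)$-graph $G$. Suppose there exists $X\subseteq V$ with $|X|=x\le n$ and $|N_H(X)|<x$. Set $W:=\bigcup_{u_i\in N_H(X)}U_i$, so that $|W|=k|N_H(X)|<kx$.

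First I would record the key structural fact about $H$: every vertex $w\in V$ adjacent in $G$ to some vertex of $X$ must lie in $W$. Indeed $w\in U_i$ for some $i$, and the adjacency forces $u_i\in N_H(X)$, whence $U_i\subseteq W$. Consequently, the set $T:=V\setminus(W\cup X)$ has no $G$-edge to $X$, i.e.\ $e_G(X,T)=0$, and $|T|\ge N-(k+1)x$.

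Next I would invoke the EML: since $G$ is an $(N,d,\lambda)$-graph, for disjoint $A,B\subseteq V$ one has $\bigl|e_G(A,B)-d|A||B|/N\bigr|\le \lambda\sqrt{|A||B|}$. Applying this with $A=X$, $B=T$, and using $e_G(X,T)=0$, I obtain
\[
x\bigl(N-(k+1)x\bigr)\le |X|\cdot|T|\le \frac{\lambda^2 N^2}{d^2}.
\]

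The final step is to contradict this inequality using the parameter bounds $N\ge k(1+\epsilon)n$, $x\le n$, and $\frac{2\lambda k}{d}\le \epsilon$, sharpened by the Ramanujan relation $\lambda=2\sqrt{d-1}$ together with $d\ge 2^8k^2/\epsilon^2$ (which gives $\lambda/d\le \epsilon/(8k)$ and $\lambda^2/d^2\le \epsilon^2/(64k^2)$). The main obstacle is that the quadratic $x\mapsto x(N-(k+1)x)$ behaves poorly at both ends of the interval $[1,n]$: for very small $x$ it is too small, and for $x$ close to $n$ it can even be negative when $k\epsilon\le 1$. I would handle the small regime by the trivial observation that each individual vertex $v\in X$ has $G$-neighbors spread over at least $d/k\ge 2^8 k/\epsilon^2$ distinct parts $U_i$, so $|N_H(X)|\ge d/k\ge x$ whenever $x\le d/k$. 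For the top regime I would refine the above argument by noting that $V\setminus W$ is in fact contained in the set $V\setminus\widehat{N_G}(X)$ of vertices with no $X$-neighbor, which decomposes disjointly as $(V\setminus N_G[X])\cup(X\setminus\widehat{N_G}(X))$; the first piece is bounded by $\lambda^2 N^2/(d^2 x)$ by the same EML argument, and the second piece is an independent set in $G$, hence of size at most $\lambda N/d$ by a second application of EML. This yields the sharper inequality $N-kx\le \lambda^2N^2/(d^2x)+\lambda N/d$, from which the required contradiction follows directly for every $x$ in the intermediate and upper range.
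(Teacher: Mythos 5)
Your approach diverges from the paper's in that it relies solely on the Expander Mixing Lemma (EML), whereas the paper invokes Tanner-type vertex-expansion results for $(N,d,\lambda)$-graphs (Lemmas 2.2 and 4.4 of \cite{AMP20}). The latter are genuinely stronger for small vertex sets, and this is where your proof has a gap. Your sharper inequality $N - kx \le \lambda^2 N^2/(d^2 x) + \lambda N/d$ only contradicts the hypotheses when $\lambda^2 N^2/(d^2 x)$ is at most, say, $N/2$, which requires $x \gtrsim \lambda^2 N/d^2$ --- a quantity of order $\epsilon^2 N/k^2$ that grows linearly in $N$. Your trivial degree bound handles only $x \le d/k$, a constant of order $k/\epsilon^2$ independent of $N$. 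For intermediate sizes --- take $x = \sqrt{N}$ once $N$ is large --- neither argument yields anything: $\lambda^2 N^2/(d^2 x) = \Theta(N^{3/2})$ dwarfs $N - kx - \lambda N/d = \Theta(N)$, so the EML inequality is vacuously satisfied, while $x$ far exceeds $d/k$.

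The missing ingredient is the Tanner-type bound used in the paper (Lemma 4.4 of \cite{AMP20}): for $X$ with $|X \cup N_G(X)| \le 3N/4$ one has $|X \cup N_G(X)| \ge \tfrac{d^2}{4\lambda^2}|X|$. This is strictly stronger than what a direct EML application gives for small $|X|$: taking $A = X$ and $B = V \setminus (X \cup N_G(X))$ in the EML yields only $|X \cup N_G(X)| \ge N - \tfrac{\lambda^2 N^2}{d^2 |X|}$, which is negative (hence useless) whenever $|X| < \lambda^2 N/d^2$, whereas the Tanner bound gives linear expansion by a factor $d^2/(4\lambda^2)$. Combined with $|N_H(X)| \ge |N_G(X)|/k$ and the choice $d = \Theta(k^2/\epsilon^2)$, this is exactly what closes the intermediate regime. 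Your EML-only large-$x$ analysis is sound and essentially parallels the paper's first case $|X| \ge \lambda N/d$, but you need to import (or reprove from scratch --- it is not a corollary of the edge-counting EML) the vertex-expansion estimate to bridge the gap.
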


\begin{proof}
For subsets $S\subseteq V$ and $T\subseteq U$ of size $s\le n$ and $t<s$,
respectively, we denote by $\mathcal{E}_{S,T}$ the event that all neighbors of
$S$ are in $T$. Note that $\mathcal{E}_{S,T}$ occurs with probability
$(tk/N)^{sd}$. By the union bound, the probability that there are two subsets
$S\subseteq V$ of size $s\le n$ and $T\subseteq U$ of size $t<s$, such that all neighbors of $S$ are in $T$ is at most
\begin{eqnarray*}
          \sum_{s=1}^{n}\sum_{t=1}^{s-1}{N\choose s}{N/k \choose
  t}\left(\frac{tk}{N}\right)^{sd} &\le &  \sum_{s=1}^{n}s\left( \frac{Ne}s\right)^s\left( \frac{Ne}{ks}\right)^s\left(\frac{sk}{N}\right)^{sd}\\
  & \le & \sum_{s=1}^{n} s\left[\frac{Ne}{s}\cdot \frac{Ne}{sk}\cdot
          \left(\frac{sk}{N}\right)^{d}\right]^s
          \\
  & \le & \sum_{s=1}^{n} \left[2e^2k \left(\frac{sk}{N}\right)^{d-2}\right]^s,
\end{eqnarray*}
where we have used the inequalities $s\le 2^s$ and ${a \choose b}\le
(ae/b)^b$. We have also used the fact that the function $x\mapsto (c/x)^x$ is
increasing on the interval $(0,c/e)$ for any fixed $c>0$, and thus
${N/k \choose t}\le (\tfrac{Ne}{tk})^t\le (\tfrac{Ne}{sk})^s$ for any
$t\le s\le n\le \tfrac1e(Ne/k)= N/k$.
\medskip

Since $s\le n\le \tfrac{N_0}{(1+\epsilon)k}\le \tfrac{N}{(1+\epsilon)k}$ we
have $(sk/N)^{d-2}\le (\tfrac1{1+\epsilon})^{d-2}\le
\exp(-\tfrac{\epsilon}2(d-2))$ for any $0<\epsilon<1$. It
follows that by taking $d$ to be a sufficiently large in $\Omega(\tfrac1\epsilon \log k)$ we have
$2e^2k\left(\tfrac{sk}{N}\right)^{d-2}<1/10$ and thus the probability that
there exist two subsets
$S\subseteq V$ of size $s\le n$ and $T\subseteq U$ of size  $t<s$, such that all neighbors of $S$ are in $T$ is at most
$\sum_{s\ge 1} 10^{-s}<1$. This completes the proof of the claim.
\end{proof}

The property that any $n$-vertex subset $X$ of $V$ is saturated by a
matching of $H$ is now a direct consequence of Hall's theorem (applied
to the subgraph of $H$ induced by $X\cup U$).
This concludes the proof of Lemma~\ref{lem:alon}.

\medskip

We note that the probabilistic construction of $H$ in the proof above can be replaced by a
purely deterministic (and explicit) construction using expander
graphs, at the cost of a more tedious analysis and a worse bound on
the degree $d$ (as a function of $k$ and $\epsilon$). The advantage of
such an explicit construction is that (together with the other
components of our proof), it provides an explicit description of the
universal graph with $(1+o(1))n$ vertices, and an efficient deterministic algorithm
giving an embedding of any $n$-vertex planar graph in the universal
graph.

\medskip

\begin{figure}[htb]
  \centering
  \pgfornament[width = 3cm, color = black]{72}
\end{figure}

\subsection*{Correspondence}
{\small
\begin{itemize}
  \item Louis Esperet, Laboratoire G-SCOP, 46 avenue F\'elix Viallet, 38000
Grenoble, France.
\item Gwena\"el Joret, Computer Science Department, Universit\'e libre de Bruxelles, Campus de la Plaine, CP 212, 1050 Brussels, Belgium.
\item Pat Morin, School of Computer Science, Carleton University, 1125
  Colonel By Drive, Ottawa, Ontario K1S 5B6, Canada.
\end{itemize}
}
\end{document}